\documentclass[11pt]{amsart}

\usepackage[left=2.5cm, right=2.5cm, top=2.5cm, bottom=2.5cm, bindingoffset=0cm]{geometry}
\usepackage{amsthm, amssymb, amsmath, amsfonts, amsaddr}

\usepackage{hyperref}

\usepackage{graphicx}
\usepackage{pgf, tikz, pgfplots, tikz-cd}
\pgfplotsset{compat=1.15}
\usetikzlibrary{arrows}

\usepackage{mathrsfs}
\usepackage{dynkin-diagrams, ytableau}
\usepackage{multirow}

\allowdisplaybreaks
\usepackage{enumitem}

\newcommand{\rge}{\rangle}
\newcommand{\lge}{\langle}

\newcommand{\mc}{\mathcal}

\newcommand{\ket}[1]{|#1\rangle}

\newcommand{\ch}{\operatorname{ch}}

\renewcommand{\le}{\leqslant}
\renewcommand{\ge}{\geqslant}

\newcommand{\C}{\mathbb{C}}

\newcommand{\Z}{\mathbb{Z}}

\newcommand{\PP}{\mathbb{P}}

\renewcommand{\b}{\mathfrak{b}}

\newcommand{\vv}{\mathfrak{v}}

\renewcommand{\r}{\mathfrak{r}}
\newcommand{\m}{\mathfrak{m}}
\newcommand{\n}{\mathfrak{n}}
\newcommand{\g}{\mathfrak{g}}
\newcommand{\h}{\mathfrak{h}}

\newcommand{\Vir}{\mathrm{Vir}}
\newcommand{\Heis}{\mathrm{Heis}}

\renewcommand{\sl}{\mathfrak{sl}}

\newcommand{\mat}[4]{\begin{pmatrix} #1 & #2 \\ #3 & #4 \end{pmatrix}}

\newcommand{\jet}[1]{[\hspace{-0.5mm}[{#1}]\hspace{-0.5mm}]}

\numberwithin{equation}{section}
\theoremstyle{plain}
\newtheorem{theorem}{Theorem}[section]
\newtheorem{definition}[theorem]{Definition}
\newtheorem{proposition}[theorem]{Proposition}
\newtheorem{corollary}[theorem]{Corollary}
\newtheorem{lemma}[theorem]{Lemma}

\newtheorem{conjecture}[theorem]{Conjecture}
\theoremstyle{definition}
\newtheorem{remark}[theorem]{Remark}
\newtheorem{computation}[theorem]{Computation}
\newtheorem{example}[theorem]{Example}
\newtheorem{antiexample}[theorem]{Anti-example}

\title[Virasoro extensions of generalized Kac-Moody algebras]{Virasoro extensions of generalized Kac-Moody algebras}
\author[Vladyslav Zveryk]{Vladyslav Zveryk}
\address{Yale University}
\email{zverik.vladislav@gmail.com}

\begin{document}

\begin{abstract}
    Borcherds-Kac-Moody (BKM) Lie algebras are constructed by combining a number of $\sl_2$-s and $3$-dimensional Heisenberg algebras in a certain way, generalizing the celebrated Kac-Moody algebras. Such algebras have a wide range of applications, including the examples of the monster and fake monster Lie algebras being BKM algebras. Among their main features are existence of an invariant bilinear form, denominator identity and Weyl-Kac character formulas for irreducible representations.
    
    We introduce a construction of closely related algebras that we call Virasoro-Borcherds-Kac-Moody (VBKM) algebras: if it happens that certain $3$-dimensional Heisenberg algebras inside a BKM algebra form the infinite-dimensional Heisenberg algebra, we replace it by the Virasoro algebra in the construction. We construct a family of Lie algebras over $\PP^1$, where the generic fiber is a VBKM algebra and a special fiber is a BKM algebra. This allows us to visualize VBKM algebras as deformations of BKM algebras and prove structural properties about VBKM algebras similar to those of BKM algebras, despite the absence of an invariant bilinear form. Those include denominator identity and character formulas of irreducible representations.
    
    We give examples of realizations of such $\PP^1$-families of VBKM algebras in lattice vertex operator algebras. In particular, we include the fake monster Lie algebra into such a family. 
\end{abstract}

\maketitle
\tableofcontents

\section{Introduction}
Borcherds-Kac-Moody algebras were introduced by Borcherds in \cite{Bor88}, where they were called generalized Kac-Moody algebras. They are generalizations of Kac-Moody algebras \cite{Kac90}. Kac-Moody algebras, on their own, aim to generalize simple Lie algebras and their representation theory. For a detailed historical survey on Kac-Moody algebras, we refer to \cite{FT21}.

Kac-Moody algebras are constructed from a square matrix $A=(a_{ij})_{i,j\in I}$ called the Cartan matrix. This matrix has to satisfy a number of key properties, namely:
\begin{itemize}
    \item $a_{ii}=2$ and $a_{ij}\in\Z_{\le 0}$ for $i\ne j$,
    \item $a_{ij}=0$ if and only if $a_{ji}=0$.
\end{itemize}
From this matrix, a Lie algebra $\g$ is constructed: it is generated by $\sl_2$-triples $e_i,h_i,f_i$ with the following commutativity relations between them:
\begin{itemize}
    \item $[e_i,f_j]=\delta_{ij}h_i$,
    \item $[h_i,e_j]=a_{ij}e_j$, $[h_i,f_j]=-a_{ij}f_j$,
    \item $(\operatorname{ad}e_i)^{1-a_{ij}}e_j=(\operatorname{ad}f_i)^{1-a_{ij}}f_j$.
\end{itemize}

When the matrix $A$ is symmetrizable, i.e. it can be made into a symmetric matrix by rescaling its rows, the algebra $\g$ acquires a lot of structural theory. Among it (\cite{Kac90}):
\begin{enumerate}
    \item Classification of Kac-Moody algebras into three types: finite, affine, and indefinite. Complete classification of finite and affine types.
    \item The Cartan decomposition
    $$
    \g=\n_-\oplus\h\oplus\n_+,
    $$
    where $\n_+$ ($\n_-$) is generated by $e_i$ ($f_i$) and $\h$ is generated by $h_i$.
    \item A non-degenerate invariant bilinear form on $\g$ after a suitable extension of its subalgebra $\h$.
    \item Presence of a root datum, hence grading of $\g$ by elements of the root lattice and of $\g$-representations by the weight lattice.
    \item Theory of highest weight representations. In particular, denominator identity for $\g$ and Weyl-Kac character formula for dominant integrable representations.
\end{enumerate}
And the list goes on. One can ask: can we classify Lie algebras possessing properties (2)-(4), and can we develop their representation theory analogous to (5)? 

Both questions were answered by Borcherds in \cite{Bor88} and investigated further in \cite{Jur96,Jur98}. It turns out that one has to relax conditions on the entries of the Cartan matrix, and one still gets an algebra satisfying properties (2)-(5). Algebras obtained this way are called Borcherds-Kac-Moody (BKM) algebras.

We refer to Section \ref{s:BKM_algebras} for precise definition, but we mention one relaxation of particular interest to us. It allows $a_{ii}=0$, in which case the generating triple is assumed to form the $3$-dimensional Heisenberg algebra:
$$
[e_i,f_i]=h_i,\qquad [h_i,e_i]=[h_i,f_i]=0.
$$

One of the most crucial inputs by Borcherds was essentially that whenever one has an algebra satisfying properties (2)-(4), this algebra is a BKM algebra (for a precise statement with all assumptions, see \cite{Bor88}), and that these algebras have a denominator identity, allowing one to get insight into dimensions of their root spaces. One particular source of such algebras are lattice vertex operator algebras. In short, if the rank of a lattice is $26$, the first BRST cohomology of the corresponding lattice VOA becomes a BKM algebra (with a proper replacement of BRST cohomology in smaller ranks, we can get the same statement there, see \cite{BGGN}). This Lie algebra has well-understood root space dimensions, but its Cartan matrix is usually hard to describe. 

However, in rank $26$, \cite{FLM89} and \cite{Bor90} constructed two examples of such lattice VOAs in rank $26$ with their BRST cohomology having its Cartan matrix well-understood. These algebras are called fake monster and monster algebras, respectively. Final proofs and descriptions of Cartan matrices were completed in \cite{Bor90} and \cite{Bor92}. There are other well-understood examples of BKM algebras, see e.g. \cite{Nie02}.

Example of the fake monster Lie algebra will be especially interesting to us. Its simple roots (roughly, triples $e_i,h_i,f_i$) consist of two classes:
\begin{itemize}
    \item Real simple roots generating a Kac-Moody algebra whose Dynkin diagram is isometric with the Leech lattice.
    \item Isotropic simple roots: for each $n\in\Z_{>0}$, $24$ copies of a root $n\rho$ with $a_{ii}=0$ for $\rho$.
\end{itemize}

Isotropic roots combine into a rank $24$ infinite Heisenberg algebra, which itself is formed from $24$ rank one Heisenberg algebras commuting with each other and glued along their central elements. Recall that the infinite-dimensional Heisenberg algebra $\Heis$ is an algebra with basis $x_n$ for $n\in\Z\setminus 0$ and $c$ satisfying relations
$$
[x_n,x_m]=n\delta_{m,-n}c.
$$

One can ask about other algebras appearing inside lattice VOAs. However, when we leave the space giving the BRST cohomology, we lose an invariant bilinear form, thus the entire theory of BKM algebras. Our main idea to overcome this is the following: deform BKM algebras inside the lattice VOAs, and show that the Lie algebras in this deformation family have properties similar to those of BKM algebras, e.g. (2), (4), and (5). This leads us to the definition of Virasoro-Borcherds-Kac-Moody (VBKM) Lie algebras.

First, we observe that the Virasoro algebra $\Vir$ has a similar structure to the infinite-dimensional Heisenberg algebra: it is an algebra with basis $L_n$ for $n\in\Z$ and $c$ and commutation relations
$$
[L_n,L_m]=(n-m)L_{n+m}+\delta_{n,-m}\frac{n^3-n}{12}c.
$$

Then we notice that the Virasoro algebra can be realized as a deformation of the Heisenberg algebra extended by a derivation $L_0$: we let $\Vir$ act on $\Heis$ by
$$
[L_n,x_m]:=-mx_{m+n}.
$$
Then we form the semidirect product $\vv:=\Vir\ltimes\Heis$ and notice that the elements 
$$
Y_n(s,t):=sL_n+tx_n\in\vv
$$
form a basis of the Virasoro algebra, thus interpolating between the Virasoro and the Heisenberg algebra. This works perfectly with BKM algebras like the fake monster Lie algebra where the infinite-dimensional Heisenberg algebras are part of the Cartan matrix. In our definition of VBKM algebras, we replace these Heisenberg algebras with Virasoro algebras and develop a structural theory similar to the theory of BKM algebras.

\subsection{Main results}
Our first main result is the abstract construction itself. We define an enhanced generalized Cartan datum, which is a Cartan matrix for a BKM algebra along with additional rows and columns corresponding to Virasoro algebras that we wish to add into the list of generators. In this case, we not only have generators $e_i,h_i,f_i$, but also $L_n^j$ for $n\in\Z$ and $c^j$.
\begin{theorem}
    Let $\g$ be a VBKM algebra. We have
    \begin{enumerate}[label=(\roman*)]
        \item $\g=\n_-\oplus\h\oplus\n_+$ where $\n_+$ ($\n_-$) is generated by $e_i$ and $\Vir_{>0}^j$ ($f_i$ and $\Vir_{<0}^j$) and $\h$ is spanned by $h_i$, $L_0^j$ and $c^j$.
        \item $\g$ is graded by the root lattice and the root spaces are finite-dimensional. 
        \item The Virasoro algebra and BKM algebras are particular examples of VBKM algebras.
    \end{enumerate}
\end{theorem}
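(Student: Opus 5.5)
The plan is to follow the standard Kac--Moody strategy: first build an auxiliary algebra from generators and the ``Cartan'' relations alone, then pass to the quotient by its maximal ideal meeting $\h$ trivially (or by the Serre-type relations), checking that everything survives the quotient. Concretely, let $\tilde\g$ be generated by $e_i,f_i,h_i$ and the Virasoro generators $L_n^j,c^j$, subject to: the relations of the BKM Cartan datum among $e_i,f_i,h_i$; the Virasoro relations among $L_n^j,c^j$; $[L_0^j,e_i]=\lambda_{ji}e_i$, $[L_0^j,f_i]=-\lambda_{ji}f_i$ and $[h_i,L_n^j]=\mu_{ij,n}L_n^j$ as prescribed by the enhanced Cartan datum; and $[e_i,L_{-n}^j]=[f_i,L_n^j]=0$ for $n>0$. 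This last family plays the role of $[e_i,f_j]=0$ for $i\neq j$. I will first grade $\tilde\g$ by the root lattice $Q$, declaring $\deg e_i=\alpha_i$, $\deg f_i=-\alpha_i$, $\deg L_n^j=n\beta_j$ and $\deg h_i=\deg L_0^j=\deg c^j=0$. All defining relations are homogeneous, so the grading is well defined, and it is detected by the adjoint action of $\h=\mathrm{span}(h_i,L_0^j,c^j)$. This requires the enhanced datum to make the simple roots $\alpha_i,\beta_j$ linearly independent functionals on $\h$, possibly after extending $\h$ as usual.

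For (i) I will show that $\tilde\g=\tilde\n_-\oplus\h\oplus\tilde\n_+$. Let $\tilde\n_+$ be the subalgebra generated by the $e_i$ and the $L_n^j$ with $n>0$, and define $\tilde\n_-$ similarly. The key step is that $\tilde\n_-+\h+\tilde\n_+$ is stable under bracketing with every generator. For instance, $\mathrm{ad}\,f_i$ maps $\tilde\n_+$ into $\tilde\n_++\h$: this is an induction on the length of brackets, using $[f_i,e_k]=\delta_{ik}h_i$, $[f_i,L_n^j]=0$ for $n>0$, and the fact that $\h$ acts diagonally. The mixed generators need separate care. The bracket $[L_{-m}^j,L_n^j]$ produces $L^j_{n-m}$, which lands in $\Vir^j_{>0}$, in $\h$, or in $\Vir^j_{<0}$, so all three pieces are hit but the sum stays closed. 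For $\mathrm{ad}\,L_{-m}^j$ applied to $e_i$, I need the datum to force $[L_{-m}^j,e_i]$ into $\tilde\n_+$ or to vanish; the imposed relation $[e_i,L_{-n}^j]=0$ guarantees this.

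Directness of the sum will come from a Verma-type construction. I will build a representation of $\tilde\g$ on the free associative algebra over the generators of $\tilde\n_-$, tensored with a one-dimensional $\h$-module of generic weight. In it, $\tilde\n_-$ acts freely, $\h$ acts by weights, and the $e_i,L_n^j$ ($n>0$) act by derivation-type recursions. The main obstacle is checking that the Virasoro relations and the mixed relations hold in this representation. For the Virasoro part, I would reduce to the known free action of $\Vir_{<0}$ on a Virasoro Verma module. For the mixed part, I would verify the relations only on generators and use induction on degree.

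Once these relations are verified, $\tilde\n_\pm$ embed as free-type algebras and the sum is direct. Passing to $\g=\tilde\g/\r$, where $\r$ is the maximal graded ideal with $\r\cap\h=0$, preserves the decomposition, since $\r=(\r\cap\tilde\n_-)\oplus(\r\cap\tilde\n_+)$ by gradedness. This gives (i), with $\n_\pm$ the images of $\tilde\n_\pm$.

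For (ii), the $Q$-grading descends to $\g$. Finite-dimensionality of root spaces follows by counting: $\n_+$ is spanned by brackets of generators of degrees $\alpha_i$ and $n\beta_j$, all positive. A positive root $\gamma=\sum k_i\alpha_i+\sum m_j\beta_j$ can be written as a sum of generator degrees in only finitely many ways, since each $n$ is bounded by $m_j$. Hence only finitely many bracket monomials have degree $\gamma$. This needs the index sets involved in a fixed $\gamma$ to be finite, and $\alpha_i,\beta_j$ to be linearly independent as assumed above.

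For (iii), taking the datum with no Virasoro rows gives exactly the BKM presentation, and the quotient by $\r$ is the BKM algebra by Borcherds' characterization. Taking a single Virasoro block and no $e_i,f_i$ gives $\Vir$ itself; the ideal $\r$ is zero there because $\Vir$ has no nonzero graded ideal avoiding $\mathrm{span}(L_0,c)$.
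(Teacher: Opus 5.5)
There is a genuine gap, and it lies in the choice of quotient.

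\textbf{The quotient is not the paper's VBKM algebra.} You set $\g=\tilde\g/\r$, with $\r$ the maximal ideal avoiding $\h$. That makes (i) and (ii) formal consequences of the decomposition of $\tilde\g$. The paper instead defines the VBKM algebra as $\g=\tilde\g/\tilde I$, where $\tilde I=\tilde I_-\oplus\tilde I_+$ is generated by an explicit set $X_\pm$. This set consists of the highest weight vectors of the maximal submodules of the Verma modules $U(\g_i)f_k$ and $U(\g_i)L^j_n$ ($n<0$), together with the commutators $[\Vir^j_{<0},f_i]$ and $[\Vir^j_{<0},L^k_n]$ in the cases $\alpha_i(c_j)=\alpha_i(b_j)=0$, resp. $\gamma_k(c_j)=\gamma_k(b_j)=0$.

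The two quotients differ in general. Proposition~\ref{p:new_ideal_in_special_VBKM} exhibits an ideal avoiding $\h$ that is not imposed, and $\tilde I=\r$ is only proved at generic points of the $\PP^1$-family (Theorem~\ref{t:characters_of_g_in_families}; the general case is Conjecture~\ref{conj:char_of_g}). Your outline does give (i)--(ii) for $\tilde\g/\r$, modulo the points below, but that is not the algebra in the statement.

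The step you are missing is exactly Proposition~\ref{p:ideal_in_g_tilde}: $\tilde I_\pm\subset\tilde\n_\pm$, hence $\tilde I\cap\h=0$. This is where the axioms of the enhanced datum enter, and your argument never uses them. For example, when $\gamma_j(h_i)=0$ the relation $[f_i,L^j_{-m}]=0$ ($m>0$) is imposed, and
\[
[L^j_m,[f_i,L^j_{-m}]]=\bigl(2m\,\alpha_i(b_j)+(m^3-m)\,\alpha_i(c_j)\bigr)f_i .
\]
This vanishes only because $\gamma_j(h_i)=0\Rightarrow\alpha_i(c_j)=0\Rightarrow\alpha_i(b_j)=0$; otherwise the ideal contains $f_i$ and hence $h_i=[e_i,f_i]$. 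When $\gamma_j(h_i)\neq 0$, you must likewise check that $L^j_m$ sends $(\operatorname{ad}f_i)^{1+2n\gamma_j(h_i)/a_{ii}}L^j_n$ back into the ideal, using that the required exponent drops as $n$ increases. The Virasoro--Virasoro commutators need the same kind of check. Even (iii) for BKM algebras is not tautological under the paper's definition: with $J=\varnothing$ you need Theorem~\ref{t:tilde_g_facts}(iii) to identify $\tilde\g/\tilde I$ with $\tilde\g/\r$.

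\textbf{Problems in the construction of $\tilde\g$ itself.} First, your list of defining relations omits $[L^j_n,L^k_m]=0$ for $nm<0$, $k\neq j$. Without it, $[L^k_{-1},L^j_1]$ is a new element of degree $\gamma_j-\gamma_k$, and your closure step for $\operatorname{ad}L^k_{-m}$ on $\tilde\n_+$ fails; you only treated $k=j$.

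Second, the Verma-type module must be $U(G_-)$, the free product of the $U(\n_{i,-})$ and $U(\Vir^j_{<0})$, not a free associative algebra on the generators as written. Left multiplication by the $L^j_{-n}$ has to satisfy the Virasoro relations already. The actual work (Lemma~\ref{l:tilde_g_Virasoro_representation}) is checking that the derivations defined by $e_i$ and $L^j_m$, $m>0$, preserve those relations.

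Third, linear independence of $\alpha_i,\gamma_j$ is neither assumed in the paper nor needed. The finiteness condition (\ref{eq:finiteness_condition}) on the multiset $\{\alpha_i,n\gamma_j\}$ is exactly what your counting argument for (ii) uses. Enlarging $\h$ would change the algebra.
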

See Section \ref{s:VBKM_algebras}.

Our second main result is in developing the point of view on VBKM algebras as deformations of BKM algebras. First of all, we construct the ultimate algebra $\g_{\Vir,\Heis}$ where we put the algebra $\vv=\Vir\ltimes\Heis$ in the generators instead of $\Vir$. It satisfies analogous properties to those of VBKM algebras described above, the proof of which mimics the VBKM case and is given in the Appendix. 

We construct a family of Lie algebras over the projective line $\PP^1$ contained in $\g_{\Vir,\Heis}$ such that the special fiber is a BKM algebra $\g_{\Heis}$, while any other fiber is a VBKM algebra.
Let $\g_{\Vir}$ be one such fiber. We prove the following.
\begin{theorem}\,
    \begin{enumerate}[label=(\roman*)]
        \item Let $\n_{\Vir,+}$ and $\n_{\Heis,+}$ be the positive parts of $\g_\Vir$ and $\g_\Heis$, respectively. There is a filtration on $\n_{\Vir,+}$ such that the associated graded
        $$  
        \operatorname{gr}\n_{\Vir,+}\simeq \n_{\Heis,+}.
        $$
        In particular, the root spaces are equally-dimensional and the VBKM algebra $\g_{\Vir}$ satisfies the denominator identity of the BKM algebra $\g_{\Heis}$.
        \item Over a generic point of $\PP^1$, $\g_{\Vir}$ has no nontrivial ideals not intersecting $\h$.
        \item Let $\lambda$ be a dominant integral weight and  $L_\Vir(\lambda)$, $L_\Heis(\lambda)$ be the quotients of the corresponding Verma modules for $\g_{\Vir}$ and $\g_{\Heis}$ by Serre relations. Then there is a filtration on $L_\Vir(\lambda)$ such that
        $$
        \operatorname{gr} L_\Vir(\lambda)\simeq L_\Heis(\lambda).
        $$
        \item Over a generic point of $\PP^1$, $L_\Vir(\lambda)$ is irreducible.
    \end{enumerate}
\end{theorem}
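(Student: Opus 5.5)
The plan is to work entirely inside the ultimate algebra $\g_{\Vir,\Heis}$, in which every fiber is the subalgebra generated by the BKM generators $e_i,h_i,f_i$ together with the elements $Y_n(s,t)=sL_n+tx_n$ and the central elements. The special fiber $s=0$ is $\g_\Heis$, and for $s\ne0$ we may rescale to $s=1$, so the fiber is $\g_\Vir$ with Virasoro generators $L_n+tx_n$.

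\textbf{Part (i).} Filter $\n_{\Vir,+}$ by the number of Virasoro-type letters: $F_k$ is spanned by iterated brackets of generators containing at most $k$ letters $L_n^j$ (counted in $\g_{\Vir,\Heis}$ after expanding $Y_n=L_n+tx_n$). Equivalently, use the grading of $\vv$ placing $L_n$ in degree $1$ and $x_n$ in degree $0$; the bracket is then filtered, since $[L,L]\subset L$, $[L,x]\subset x$ and $[x,x]\subset$ center. In the associated graded the bracket $[L_n,L_m]$ drops to lower order, so the leading symbols of the $Y_n$ behave like commuting-up-to-center Heisenberg generators, with the same weights as $x_n$.

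This yields a surjection $\n_{\Heis,+}\to\operatorname{gr}\n_{\Vir,+}$ once we check that the defining Serre-type relations of $\g_\Heis$ hold for the symbols. Injectivity follows from a dimension count in each root space: we need $\dim(\n_{\Vir,+})_\alpha\ge\dim(\n_{\Heis,+})_\alpha$, which is lower semicontinuity in the family. The key point is that the family is flat over $\PP^1$ in each root space, which we obtain by presenting each root space as the image of a fixed finite-dimensional free-Lie-algebra root space modulo relations whose ranks vary with $(s:t)$. Generic rank is at least the special rank, but we need equality, so the actual argument uses the filtration both ways: the leading terms of relations in $\g_\Vir$ are relations in $\g_\Heis$, giving $\dim\operatorname{gr}\le\dim\n_{\Heis}$, and the embedding into $\g_{\Vir,\Heis}$ (where $Y_n(1,t)$ and $Y_n(0,1)$ sit in one algebra whose structure is known from the Appendix) gives the reverse inequality. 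The denominator identity then follows because characters of $\n_+$ coincide, and it is inherited from the BKM identity of $\g_\Heis$.

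\textbf{Part (iii).} The same filtration argument applies to $L_\Vir(\lambda)=U(\n_{\Vir,-})v_\lambda/(\text{Serre})$, filtered by PBW degree in the Virasoro-type letters. It suffices to check that $f_i^{\lambda(h_i)+1}v_\lambda$ and the analogous relations are homogeneous in leading order, and that weight-space dimensions match by semicontinuity together with the explicit structure in $\g_{\Vir,\Heis}$.

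\textbf{Parts (ii) and (iv).} These are generic statements, and I would prove them by a semicontinuity argument on $\PP^1$. Consider an ideal not meeting $\h$; it is graded, so its lowest root component $\alpha$ consists of vectors $v$ with $[f_i,v]=0$ and $[Y_{-n},v]=0$ for all generators. Such singular vectors in $(\n_+)_\alpha$ form the kernel of a matrix depending algebraically on $(s:t)$, so the kernel dimension is generically minimal. At the BKM point this kernel vanishes by the BKM theory (Borcherds/Kac: $\g_\Heis$ has no such ideals, and $L_\Heis(\lambda)$ is irreducible by the Weyl–Kac/Borcherds character theory). Hence the kernel vanishes on a nonempty Zariski-open set for each $\alpha$. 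For $L(\lambda)$ the same applies to singular vectors in each weight space $\lambda-\alpha$. Since there are countably many $\alpha$, "generic" means outside a countable set, and the result is the intersection of countably many open sets.

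\textbf{Main obstacle.} The hardest step is the equality of dimensions in (i) and (iii), that is, flatness of the family at the special fiber. Semicontinuity alone gives only one inequality. I would first try to construct an explicit section, a basis of $(\n_{\Heis,+})_\alpha$ lifting to a basis of every fiber, using the triangular structure of $\g_{\Vir,\Heis}$ from the Appendix.
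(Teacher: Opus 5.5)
\textbf{Gap: the reverse dimension inequality.} Your filtration step, the surjection $\n_{\Heis,+}\twoheadrightarrow\operatorname{gr}\n_{\Vir,+}$ (and its analogue for $L(\lambda)$), is the same as in the paper. But it only gives $\dim\g_{\Vir,x,\alpha}\le\dim\g_{\Heis,\alpha}$. Naive semicontinuity of the Serre quotients gives the same direction, since relations can only degenerate at the special fiber.

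The reverse inequality is the real content of the theorem, and your proposal does not supply it:
\begin{itemize}
\item The appeal to ``the embedding into $\g_{\Vir,\Heis}$ whose structure is known from the Appendix'' does not produce it. The Appendix only proves the triangular decomposition and freeness of $\tilde\n_\pm$ for the ultimate algebra, which says nothing about the size of the Serre quotients.
\item The explicit section lifting a basis of $(\n_{\Heis,+})_\alpha$ is left as a hope.
\item Your argument for (ii) and (iv) inherits the gap. To treat singular vectors in $(\n_{x,+})_\alpha$ as the kernel of a matrix varying algebraically in $x$, you already need the spaces $(\n_{x,+})_\alpha$ to form a vector bundle. That is exactly the flatness you have not established.
\item You never use symmetrizability, which is where the missing input comes from.
\end{itemize}

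\textbf{The missing idea.} Compare the Serre quotient with a second quotient of the flat family $\tilde{\mc G}$ of ``free'' algebras, one whose dimension moves in the opposite direction.
\begin{enumerate}
\item Let $\r_x\subset\tilde\g_x$ be the largest ideal not meeting $\h$. By Proposition \ref{p:kernel_of_Shapovalov_vs_largest_ideal}, in each root space $\r_{x,\alpha}$ is the common radical of finitely many Shapovalov forms $A_{\lambda_1},\dots,A_{\lambda_n}$.
\item These forms vary algebraically on the fixed-rank bundle $\tilde{\mc G}_\alpha$. So $\dim\r_{x,\alpha}$ is upper semicontinuous, i.e.\ $\dim(\tilde\g_x/\r_x)_\alpha\ge\dim(\tilde\g_0/\r_0)_\alpha$ for generic $x$.
\item At the BKM point, symmetrizability gives $\tilde I_0=\r_0$: the Serre relations generate the maximal ideal. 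So the right-hand side equals $\dim\g_{\Heis,\alpha}$.
\item Since $\tilde I_x\subseteq\r_x$, there is a surjection $\g_{\Vir,x}\twoheadrightarrow\tilde\g_x/\r_x$, hence $\dim\g_{\Vir,x,\alpha}\ge\dim\g_{\Heis,\alpha}$.
\end{enumerate}
Together with your filtration inequality this forces equality, hence the isomorphism $\operatorname{gr}\n_{\Vir,+}\simeq\n_{\Heis,+}$. It also gives $\tilde I_x=\r_x$ for generic $x$, which is (ii). Since all fibers with $s\neq0$ have isomorphic positive parts with the same Serre relations, the dimension equality extends from generic $x$ to every such $x$.

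Parts (iii)--(iv) go the same way with two substitutions: use $S_\lambda=\ker A_\lambda$ on $\widetilde M(\lambda)$ in place of $\r$, and Jurisich's theorem $S_\lambda=J_\lambda$ (symmetrizable BKM algebras, dominant integral $\lambda$) in place of $\tilde I_0=\r_0$.
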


For these statements, see Theorems \ref{t:characters_of_g_in_families}, \ref{t:characters_of_reps_in_families}, and \ref{t:ass_graded_of_Vir_is_Heis}.

The third important part in our paper is examples and computations in lattice VOAs developed in Section \ref{s:examples}. In particular, we give a detailed analysis of Virasoro algebras lying in lattice VOAs and the fake monster Lie algebra.

\subsection{Structure of the paper}
In Section 2, we give preliminaries on BKM algebras.

Section 3 is dedicated to constructing VBKM algebras. In Section 4, we prove results analogous to those stated about BKM algebras in Section 2. We construct a family of Lie algebras deforming a BKM algebra into VBKM algebras. We prove the denominator identity and character formulas for VBKM algebras.

In Section 5, we include examples of VBKM algebras realizable inside lattice VOAs. In particular, we describe the case of the fake monster Lie algebra in detail.

In Appendix, we give the proof that the ultimate algebra $\g_{\Vir,\Heis}$ needed for the deformation construction satisfies properties similar to VBKM algebras.

\section{Borcherds-Kac-Moody algebras}\label{s:BKM_algebras}
 For good expositions on the topic, we refer to \cite{Jur96}, \cite{Ray06}, and \cite{Nie02}.

We fix the following data:
\begin{itemize}
    \item $I$ is an index set.
    \item $\h$ and $\h^\vee$ are vector spaces with a bilinear pairing 
    $$
    \lge\cdot,\cdot\rge:\h\otimes\h^\vee\to \C.
    $$
    \item $h_i\in\h$ and $\alpha_i\in\h$ are distinguished elements for $i\in I$ with $\h$ being spanned by $h_i$.
\end{itemize}

\begin{definition}
    The matrix 
$$
A=(a_{ij}):=(\lge h_i,\alpha_j\rge)
$$
is called a {\bfseries generalized Cartan matrix} if
\begin{itemize}
    \item If $a_{ii}>0$, then $2a_{ij}/a_{ii}\in \Z_{\le 0}$ for all $j\in I$. 
    \item For all $i,j\in I$, $a_{ij}=0$ is equivalent to $a_{ji}=0$.
\end{itemize}

We call the entire choice $(\h,\h^\vee,I,\alpha_i,h_i)$ a {\bfseries generalized Cartan datum} if $A$ is a generalized Cartan matrix and for any $\alpha\in\h^\vee$, the number of ways to write
\begin{equation}\label{eq:finiteness_condition}
   \alpha=\sum_{k=1}^mn_{i_k}\alpha_{i_k},\qquad n_{i_k}\in\Z_{\ge 0}
\end{equation}
is finite.

A generalized Cartan datum is called {\bfseries symmetrizable} if there is a linear automorphism of $\h$ acting on $h_i$ by rescaling such that the Cartan matrix with rescaled $h_i$ becomes symmetric.
\end{definition}

The set $I$ then naturally decomposes into three subspaces:
\begin{align*}
    I_{>0}:=\{i\in I:a_{ii}>0\},\\
    I_{0}:=\{i\in I:a_{ii}=0\},\\
    I_{<0}:=\{i\in I:a_{ii}<0\}.
\end{align*}
These spaces will come in handy in our study of BKM Lie algebras.

To a generalized Cartan matrix whose rows and columns are indexed by a set $I$, we can associate a Cartan datum as follows: if $A$ is non-degenerate, we define $\h$ to be a vector space with a chosen basis $h_i$ for $i\in I$. Then $\alpha_i$ are chosen in the dual space $\h^*$ to satisfy $a_{ij}=\alpha_j(h_i)$, and the pairing between these two vector spaces is standard. When $A$ is degenerate, we can find larger spaces $\h$ and $\h^\vee$ in which $h_i$ and $\alpha_i$ are linearly independent. See \cite[Chapter 1]{Kac90} for details. Clearly, condition (\ref{eq:finiteness_condition}) is satisfied when linear independence of $\alpha_i$ is assumed.

Note also that for any subspaces $\m\subset\lge h_i:i\in I\rge\subset  \h$ and $\m^\vee\subset\lge \alpha_i:i\in I\rge\subset \h^\vee$ contained in $\ker A$, the datum
$$
(\h/\m,\h^\vee/\m^\vee,I,\alpha_i,h_i)
$$
is a Cartan datum as soon as the assumption (\ref{eq:finiteness_condition}) is still satisfied, where by $\alpha_i$ ($h_i$) we understand their images modulo $\m^\vee$ ($\m$). See Example \ref{e:Heisenberg_degenerations} for such degenerations.

Let $\mc F$ be a Cartan datum with the notation as above. We define the Lie algebra $\tilde\g(\mc F)$ generated by triples $e_i,h_i,f_i$ for $i\in I$ with the following relations:
\begin{align*}
[h_i,h_j]&=0,\\
[h_i,e_j]&=\alpha_j(h_i)e_j=a_{ij}e_j,\\
[h_i,f_j]&=-\alpha_j(h_i)f_j=-a_{ij}f_j,\\
[e_i,f_j]&=\delta_{ij}h_i.
\end{align*}
We notice the following:
\begin{itemize}
    \item When $i\in I_{>0}$ or $I_{<0}$, the triple $e_i,h_i,f_i$ can be rescaled to an $\sl_2$ triple.
    \item When $i\in I_0$, the triple $e_i,h_i,f_i$ generates the $3$-dimensional Heisenberg algebra.
    \item The vector space $\h$ sits naturally in $\tilde\g(\mc F)$ as an abelian subalgebra.
\end{itemize}

Here are the following facts that we will use. For the proofs, see \cite[Chapter 1]{Kac90} or \cite[Sections 1.1-1.2]{Jur96}.
\begin{theorem}\, \label{t:tilde_g_facts}
    \begin{enumerate}[label=(\roman*)]
        \item The Lie algebra $\tilde \g(\mc F)$ is non-zero.
        \item Let $\n_+$ and $\n_-$ be the subalgebras of $\tilde \g(A)$ generated by $e_i$ and $f_i$ for $i\in I$, respectively. We have a direct sum decomposition
        $$
        \tilde \g(\mc F)=\n_-\oplus\h\oplus\n_+.
        $$
        Moreover, $\h$ acts semisimply on $\tilde \g(\mc F)$.
        \item There exists the largest ideal $\r$ in $\tilde \g(A)$ which does not intersect $\h$. We have 
        $$
        \r=\r\cap \n_+\oplus \r\cap\n_-.
        $$
        If the generalized Cartan datum is symmetrizable, then $\r\cap \n_+$ is generated by {\bfseries Serre relations} 
        \begin{align*}
            (\operatorname{ad}e_i)^{1-2a_{ij}/a_{ii}}e_j&=0,\qquad i\in I_{>0},\\
            [e_i,e_j]&=0,\qquad i,j\in I\text{ such that } a_{ij}=a_{ji}=0.
        \end{align*}
        and $\r\cap\n_-$ being generated by relations 
        \begin{align*}
            (\operatorname{ad}f_i)^{1-2a_{ij}/a_{ii}}f_j&=0,\qquad i\in I_{>0},\\
            [f_i,f_j]&=0,\qquad i,j\in I\text{ such that } a_{ij}=a_{ji}=0.
        \end{align*}
    \end{enumerate}
\end{theorem}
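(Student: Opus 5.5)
We follow the standard Kac--Moody approach (\cite[Chapter 1]{Kac90}): build a representation of $\tilde\g(\mc F)$ on a tensor algebra to obtain (i) and the triangular decomposition (ii), then construct $\r$ abstractly and identify its generators in the symmetrizable case using a contravariant form.

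\textbf{Steps (i) and (ii).} Fix $\lambda\in\h^*$ and let $V=T(W)$ be the tensor algebra on a vector space $W$ with basis $\{w_i\}_{i\in I}$. Define operators on $V$ by:
\begin{itemize}
\item $f_i$ acts by left multiplication by $w_i$;
\item $h$ acts on $w_{j_1}\otimes\cdots\otimes w_{j_k}$ by the scalar $\lambda(h)-\sum_s\alpha_{j_s}(h)$;
\item $e_i$ is defined inductively by $e_i(1)=0$ and $e_i(w_j\otimes v)=w_j\otimes e_i v+\delta_{ij}\lambda_{(j)}(h_i)v$, where $\lambda_{(j)}$ is the weight of $v$.
\end{itemize}
A direct check shows these operators satisfy the defining relations, so $\tilde\g(\mc F)\neq 0$ and $\h$ embeds (choose $\lambda$ separating elements of $\h$).

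Next, $\n_-+\h+\n_+$ is stable under $\operatorname{ad}e_i$, $\operatorname{ad}f_i$ and $\operatorname{ad}h_i$, by the Jacobi identity and induction on bracket length. Since it contains the generators, it equals $\tilde\g(\mc F)$. Every element of $\n_\pm$ is a sum of $\h$-weight vectors with weights in $\pm\sum\Z_{\ge0}\alpha_i\setminus\{0\}$, which gives semisimplicity of the $\h$-action. Directness of the sum follows from this weight separation, plus the representation above: it shows $\n_-$ acts freely on $1\in V$ (it is the free Lie algebra on the $f_i$), so no element of $\n_-$ collapses into $\h$.

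\textbf{Step (iii), existence of $\r$.} Any ideal meeting $\h$ trivially is $\h$-graded, so its weight-$0$ part vanishes. Hence it lies in $\n_-\oplus\n_+$, and the sum of all such ideals is still such an ideal; this is $\r$. To split $\r=\r\cap\n_+\oplus\r\cap\n_-$, note that for $x\in\r\cap\n_+$ the ideal generated by $x$ is $U(\n_-)U(\h)U(\n_+)\cdot x$. Brackets with $\n_-$ lower degree but can never reach weight $0$ without passing through $\h$, which is excluded. One then shows that the projection of $\r$ to $\n_+$ generates an ideal inside $\n_+$ (using $[f_i,\r_+]\subset\n_+$ when the degree is positive), and similarly for $\n_-$, giving the splitting as in \cite[Prop.~1.7]{Kac90}.

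\textbf{Step (iii), Serre relations (main obstacle).} We need to show that in the symmetrizable case $\r\cap\n_+$ is generated exactly by the Serre relations.
\begin{enumerate}
\item The Serre elements lie in $\r$. For $i\in I_{>0}$, $f_k$ kills $(\operatorname{ad}e_i)^{1-2a_{ij}/a_{ii}}e_j$ by $\sl_2$-theory. When $a_{ij}=a_{ji}=0$, one checks $[f_k,[e_i,e_j]]=0$ directly. So these elements generate ideals avoiding $\h$.
\item Conversely, let $\g'$ be the quotient by the Serre relations. Symmetrizability gives a nondegenerate invariant form on $\g'/\r'$ and the Casimir operator. Borcherds' argument (\cite{Bor88}, \cite[1.2]{Jur96}) then shows that a highest weight vector $x\in\r'\cap\n'_+$ of minimal height has weight $\beta$ with $2(\rho,\beta)=(\beta,\beta)$.
\item Combined with the imaginary/isotropic root analysis, this forces $\beta$ to be a sum of mutually orthogonal simple roots, which are killed by the $[e_i,e_j]=0$ relations. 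This contradiction yields $\r'=0$.
\end{enumerate}
This Casimir step is the delicate one, particularly handling $I_{\le 0}$. Since it is standard, we cite \cite{Jur96} rather than redo it.
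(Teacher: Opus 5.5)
Your proposal is correct and takes the same route as the paper, which gives no independent proof but defers to Kac's tensor-algebra construction for (i)--(ii) and to Borcherds' Casimir argument (as in Jurisich) for the Serre relations; the one argument the paper spells out, the $\sl_2$ singular-vector computation after the theorem, is the mirror image of your Step 1. One caveat comes from the paper's definitions rather than from you: your weight-separation steps require the $\alpha_i$ to be separated by $\h$ as in a genuine realization (this fails for degenerate data such as the paper's Heisenberg example, where $\h$ acts trivially), and your Step 3 estimate requires Borcherds' condition $a_{ij}\le 0$ for all $i\ne j$, which the paper's definition of a generalized Cartan matrix omits and without which (iii) is false (for $A=\begin{pmatrix}-2&1\\1&-2\end{pmatrix}$ the nonzero element $[f_1,[f_1,f_2]]$ is killed by $e_1$ and $e_2$, hence lies in $\r$, although the list of Serre relations is empty).
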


Let us explain where the relations in Theorem \ref{t:tilde_g_facts}(iii) come from and why $\r$ does not intersect $\h$. Pick $i\in I_{>0}\sqcup I_{<0}$. Then the triple 
$$
e,h,f:=\frac{\sqrt2 e_i}{\sqrt{a_{ii}}},\frac{2h_i}{a_{ii}},\frac{\sqrt2 f_i}{\sqrt{a_{ii}}}
$$
is an $\sl_2$-triple. Since $[e_i,f_j]=0$, the vector $f_j\in\tilde\g(\mc F)$ is a highest weight vector for $\sl_2$ with highest weight $\alpha_j(2h_i/{a_{ii}})=2a_{ij}/a_{ii}$ with respect to the adjoint action.

By representation theory of $\sl_2$, the $\sl_2$-submodule generated by this highest weight $f_j$ contains a largest submodule, in which case this submodule has highest weight $v:=(\operatorname{ad}f_i)^{1-2a_{ij}/a_{ii}}f_j$ if $i\in I_{>0}$ and is zero if $i\in I_{<0}$. We have:
\begin{itemize}
    \item $[e_i,v]=0$ by the highest weight condition.
    \item $[e_k,v]=0$ for all $k\ne i,j$ since any such $e_k$ commutes with $f_i$ and $f_j$.
    \item $[e_j,v]=(\operatorname{ad}f_i)^{1-2a_{ij}/a_{ii}}[e_j,f_j]=(\operatorname{ad}f_i)^{-2a_{ij}/a_{ii}}a_{ij}f_j$, which is zero in both possible cases $a_{ij}=0$ and $a_{ij}<0$.
\end{itemize}

Therefore, the $sl_2$-representation defined by $v$ is entirely contained in $\n_-$ and is preserved by $\n_+$. Then the ideal generated by it is generated by $U(\n_-)$ hence stays in $\n_-$ and does not intersect $\h$. Adding the fact that every Verma module for the Heisenberg algebra is irreducible or has highest weight $0$ with the trivial one-dimensional quotient (see \cite[Proposition 1.7.2]{FLM89}), we finish the proof of the following.
\begin{proposition}\label{p:BKM_interpretation_using_reps_of_simple_pieces}
    The quotient $\tilde \g(\mc F)/\r$ is the closest to $\tilde \g(\mc F)$ quotient of $\tilde \g(\mc F)$ such that for any $i$, the subrepresentations of the algebra $\lge e_i,h_i,f_i\rge$ generated by $e_j$ and $f_j$ are irreducible.
\end{proposition}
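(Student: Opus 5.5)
We first make the statement precise. For each $i\in I$ let $\g_i:=\lge e_i,h_i,f_i\rge$. "Closest" means: $\tilde\g(\mc F)/\r$ has the stated irreducibility property, and every quotient $\tilde\g(\mc F)/J$ with that property factors through it, i.e. $J\supseteq \r$. The plan has two parts. Part (a) shows that in $\tilde\g(\mc F)/\r$, for every $i$ and every $j$, the $\g_i$-modules generated by $e_j$ and by $f_j$ are irreducible. Part (b) shows that for any ideal $J$ such that $\tilde\g(\mc F)/J$ has this property, $J\supseteq\r$. Symmetric arguments handle $e_j$ and $f_j$ via the Chevalley involution $e_i\leftrightarrow -f_i$, $h_i\mapsto -h_i$, so we only treat $f_j$.

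For (a), fix $i,j$. The $\g_i$-module $M_{ij}$ generated by $f_j$ in $\tilde\g(\mc F)$ is a quotient of a Verma module. For $j\neq i$, $f_j$ is a highest weight vector, since $[e_i,f_j]=0$ and it has weight $-a_{ij}$ under $h_i$. For $j=i$, $f_i$ generates the adjoint module, of dimension at most $3$.

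Case $i\in I_{>0}$. The discussion preceding the statement shows that the maximal proper submodule of $M_{ij}$ is generated by $v=(\operatorname{ad}f_i)^{1-2a_{ij}/a_{ii}}f_j$. It also shows that $v$ generates an ideal contained in $\n_-$, hence lying in $\r$. So in the quotient $M_{ij}$ becomes the irreducible finite-dimensional module.

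Case $i\in I_{<0}$. The highest weight $2a_{ij}/a_{ii}$ of $f_j$ after rescaling is non-negative, hence not in $\Z_{<0}$-shifted form. For the $\sl_2$ Verma module with highest weight $\mu$, a singular vector exists only at $\mu\in\Z_{\ge0}$, at depth $\mu+1$. Here, however, $h$ is rescaled by $2/a_{ii}<0$, so the weight is $\mu=2a_{ij}/a_{ii}\ge0$ only if $a_{ij}\le 0$. We must check that either the Verma is irreducible or its singular vector $v$ satisfies the same three bullet computations. The computations in the discussion apply verbatim: $[e_j,v]$ is a multiple of $a_{ij}$ times a power of $\operatorname{ad}f_i$. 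When $a_{ij}\ne0$ and the weight is such that $v$ is singular, the same argument shows the ideal generated by $v$ lies in $\n_-$. The discussion asserts this submodule is zero; we accept that.

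Case $i\in I_0$. We use the cited fact \cite[Proposition 1.7.2]{FLM89}: the Heisenberg Verma module is irreducible if $h_i$ acts by nonzero scalar $-a_{ij}$. If $a_{ij}=0$, the proper submodule is spanned by $[f_i,f_j]$, and then $a_{ji}=0$ as well. In that case $[f_i,f_j]$ is killed by all $e_k$: for $k=i,j$ we use $[h_i,f_j]=0=[h_j,f_i]$, and for other $k$ the vanishing is clear. Hence it generates an ideal inside $\n_-$, contained in $\r$. In all cases the obstructions to irreducibility lie in $\r$, and passing to the quotient by $\r$ only makes the modules smaller, so they remain irreducible, being images of irreducible or of the described quotients.

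For (b), let $J$ be an ideal with the irreducibility property. We claim $J$ must contain each of the vectors $v$ and $[f_i,f_j]$ above. Indeed, their images generate proper submodules of the irreducible images of $M_{ij}$. These submodules must vanish unless the whole image vanishes, and the latter would force $f_j\in J$, hence $h_j\in J$, which we handle by requiring $J\cap\h=0$. So the ideal $J_0$ generated by all such vectors lies in $J$. The main obstacle is concluding that $J_0=\r$, i.e. that $\r$ is generated by these relations. This is exactly Theorem \ref{t:tilde_g_facts}(iii) in the symmetrizable case, which we invoke. In general, we would instead interpret "closest" as "$J_0$ is the smallest ideal forcing irreducibility and $J_0\subseteq\r$", and we would flag that equality requires symmetrizability.
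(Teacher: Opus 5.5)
Your overall route is the paper's. The relations forcing irreducibility are the singular vectors of the $\g_i$-Verma modules $M_{ij}$; their ideals stay inside $\n_\pm$, hence inside $\r$; minimality comes from Theorem~\ref{t:tilde_g_facts}(iii). Your explicit split into (a) and (b) is accurate, and so is your remark that (b) needs symmetrizability (the paper's discussion really only establishes (a)).

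\textbf{The failing step: $i\in I_{<0}$.} There you accept the claim that the maximal proper submodule of $M_{ij}$ is zero. This is false when $a_{ij}=0$. In that case $f_j$ has weight $0$, and $[f_i,f_j]\neq 0$ (by freeness of $\n_-$) is a singular vector, since $[e_i,[f_i,f_j]]=[h_i,f_j]=0$.

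There is also a sign slip you copied: the weight of $f_j$ for $2h_i/a_{ii}$ is $-2a_{ij}/a_{ii}$, because $[h_i,f_j]=-a_{ij}f_j$. Under the usual convention $a_{ij}\le 0$ for $i\ne j$, this weight is $\le 0$, so the $\sl_2$-Verma module is reducible exactly when $a_{ij}=0$.

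The error is not cosmetic for (b). Take $i,j\in I_{<0}$ with $a_{ij}=a_{ji}=0$. By your description, $[f_i,f_j]$ lies in neither $N_{ij}$ nor $N_{ji}$. So the ideal $J_0$ generated by ``the vectors $v$ and $[f_i,f_j]$ above'' does not contain the Serre relations $[f_i,f_j]$ and $[e_i,e_j]$. Theorem~\ref{t:tilde_g_facts}(iii) then no longer gives $\r\subseteq J_0$.

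\textbf{The fix.} Treat every $i$ with $\g_i\cong\sl_2$ uniformly. If $M_{ij}$ ($j\ne i$) is reducible, its highest weight is some $\mu\in\Z_{\ge 0}$ and its maximal submodule is generated by $v=(\operatorname{ad}f_i)^{\mu+1}f_j$. Then:
\begin{itemize}
\item $[e_k,v]=0$ trivially for $k\ne i,j$;
\item $[e_i,v]=0$ by singularity;
\item $[e_j,v]=(\operatorname{ad}f_i)^{\mu+1}h_j=a_{ji}(\operatorname{ad}f_i)^{\mu}f_i$. This vanishes for $\mu\ge 1$ because $[f_i,f_i]=0$, and for $\mu=0$ because then $a_{ij}=0$, hence $a_{ji}=0$.
\end{itemize}
So the ideal generated by $v$ is $U(\n_-)\cdot v\subset \n_-$. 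The vectors $v$ (together with their $\omega$-images and the $I_0$ case) are then exactly the Serre relations, which is what (b) needs.

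\textbf{Smaller points.}
\begin{itemize}
\item The property must be read for $j\ne i$. For $i\in I_0$, the $\g_i$-module generated by $f_i$ in $\tilde\g/\r$ is $\C f_i\oplus\C h_i$, which has the trivial submodule $\C h_i$.
\item In (a), say why the image of $M_{ij}$ in $\tilde\g/\r$ is nonzero: $[e_j,f_j]=h_j\notin\r$.
\item The hypothesis $J\cap\h=0$ in (b) is unnecessary. Irreducible modules are nonzero, so $f_j\notin J$ automatically.
\end{itemize}
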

We will use this interpretation to generalize BKM algebras in subsequent sections. But now, it is time we finally defined BKM algebras.

\begin{definition}
    The Borcherds-Kac-Moody (BKM) algebra associated with a Cartan datum $\mc F$ is $\g(\mc F):=\tilde\g(\mc F)/\r$, where $\r$ is the largest ideal of $\tilde\g(\mc F)$ not intersecting $\h$.
\end{definition}

We can introduce a grading on $\tilde\g(\mc F)$ by $\h^\vee$ and $\g(\mc F)$ by declaring $\deg h_i=0$, $\deg e_i=\alpha_i$ and $\deg f_i=-\alpha_i$. We call $\alpha\in \h^\vee$ {\bfseries a root of $\g(\mc F)$} if $\alpha\ne 0$ and the corresponding {\bfseries root space} $\g(\mc F)_\alpha\ne 0$. Note that condition (\ref{eq:finiteness_condition}) ensures that root spaces are finite-dimensional and $\g(\mc F)_0=\h$.

Let us give some examples of these.
\begin{example}
    The simplest example is Kac-Moody Lie algebras extensively studied in \cite{Kac90}, which is the case $I_{>0}=I$. In particular, when the Cartan matrix is of finite type, we get a finite-dimensional simple Lie algebra.
\end{example}

\begin{example}\label{e:Heisenberg_degenerations}
    In this example, we set $I:=\Z_{>0}$ with $A=0$. Let $\h=\bigoplus_{n>0}\C h_n$, $\h^\vee=\bigoplus_{n>0}\C \alpha_n$. Then $\g=\bigoplus_{n>0} \g_n$ with $\g_n$ commuting with each other and $e_n$ having degree $\alpha_n$. We can have the following degenerations of this:
    \begin{enumerate}[label=(\roman*)]
        \item Let $\h=\C h$ and $\h^\vee=\C\alpha$ with $h_n=h$ and $\alpha_n=n\alpha$. In this case, we obtain the infinite-dimensional Heisenberg algebra with basis $e_n$, $h$, $n\in\Z\setminus0$ and relations
        $$
        [e_n,e_m]=n\delta_{n,-m}h,
        $$
        where we set $e_n:=f_{-n}$ for $n<0$.
        The element $e_n$ has degree $n\alpha$.
        \item Let $\h=0$ and $\h^\vee=\C\alpha$ with $\alpha_n=n\alpha$. Then we get the abelian Lie algebra with basis $e_n$, $n\in\Z$.
        \item Let $\h=\C h$ and $\h^\vee=0$. The construction gives us the Heisenberg algebra, but the condition (\ref{eq:finiteness_condition}) is not satisfied and we have an infinite-dimensional zero root space.
    \end{enumerate}
\end{example}

% \subsection{Kac-Moody algebras associated with lattices}

% \begin{example}
%     Let $L$ be a Lorentzian lattice, i.e. a finite free abelian group with a symmetric non-degenerate bilinear form of signature $(n-1,1)$.  Choose elements $\alpha_i\in L$ for $i\in I$, and assume that
%     $$
%     \left(\sum_i\Z_{\ge 0}\alpha_i\right)\cap \left(\sum_i\Z_{\le 0}\alpha_i\right)=\{0\}.
%     $$
%     This condition means that the set of roots decomposes into positive and negative roots. We define $\h:=L_{\C}^*$, identify $\h$ with $\h^*$ via the bilinear form from $L$, and set $h_i:=\alpha_i$. Then $A$ is 
% \end{example}

% \begin{example}
%     Let $L$ be a lattice, i.e. a finite free abelian group with a symmetric non-degenerate bilinear form. Choose elements $\alpha_i\in L$ for $i\in I$, and assume that
%     $$
%     \left(\sum_i\Z_{\ge 0}\alpha_i\right)\cap \left(\sum_i\Z_{\le 0}\alpha_i\right)=\{0\}.
%     $$
%     This condition means that the set of roots decomposes into positive and negative roots. We define $\h:=L_{\C}^*$, identify $\h$ with $\h^*$ via the bilinear form from $L$, and set $h_i:=\alpha_i$. Then $A$ is 
% \end{example}

%By the above discussion, we can think of $\g(\mc F)$ as the subalgebra generated by $\sl_2$ and Heisenberg triples, with the assumption that the positive part of each such triple commutes with the negative part of each another one, quotiented by the condition that representation of each triple generated by a positive element of another triple is always irreducible.

\section{VBKM algebras}\label{s:VBKM_algebras}
\subsection{The Virasoro algebra}
We start with recalling the definition of the Virasoro algebra. First of all, we define the {\bfseries Witt algebra} to be the Lie algebra $\operatorname{Der}\C[t,t^{-1}]$ of derivations of $\C[t,t^{-1}]$. It has a basis $L_n':=t^{n+1}\partial_t$, and these elements satisfy the following relations:
$$
[L_n',L_m']=(n-m)L_{n+m}',\qquad n,m\in\Z.
$$

The {\bfseries Virasoro algebra} $\Vir$ is a non-trivial central extension of the Witt algebra. It turns out that any two nontrivial central extensions are isomorphic as Lie algebras. The most common form to write such an extension is taking a vector space with basis $L_n$ and $c$, $n\in\Z$, with the Lie bracket defined by
$$
[L_n,L_m]=(n-m)L_{n+m}+\delta_{n,-m}\frac{n^3-n}{12}c.
$$
Note that if we replace $L_0$ with $\tilde L_0:=L_0+\frac {a+1}{24}c$ and set $\tilde L_n:=L_n$, then this relation will become
$$
[\tilde L_n,\tilde L_m]=(n-m)\tilde L_{n+m}+\delta_{n,-m}\frac{n^3+an}{12}c.
$$
So, the linear term can be chosen arbitrarily. However, the standard choice adds an interesting new feature: the triple $L_{1},L_0,L_{-1}$ is an $\sl_2$-triple.

We describe the structure formula for the Virasoro Verma modules of central charge $c$. According to \cite[Theorem 8.1]{KR97}, a highest weight vector in the Verma module $M(c,h)$ has weight $h+N$, where $N=rs$ and $h=h_{r,s}$ for some positive integers $r,s$ and
\begin{equation}\label{eq:Virasoro_highest_weights}
    h_{r,s}(c) = \frac{1}{48} \left[ (13 - c)(r^2 + s^2) + \sqrt{(c-1)(c-25)} (r^2 - s^2) - 24rs - 2 + 2c \right].
\end{equation}

% If $(1-c)(25-c)$ is not a square, the equality $h=h_{r,s}$ is only possible when $r=s$ since $h$ is an integer. In this case,
% $$
% h_{r,r}=\frac1{24} (1-c)(r^2-1).
% $$

\subsection{The algebra $\widetilde{\g}(\mc F)$}\label{s:tilde_g}
Our goal is to allow adding Virasoro algebras in the construction of BKM algebras, along with $\sl_2$ and Heisenberg algebras. Any such Virasoro algebra will have one additional root and two additional coroots: one root and coroot coming from the central charge, and one coroot coming from $L_0$. The next definition gives the conditions which those additional roots and coroots must satisfy.
\begin{definition}\label{def:enh_gen_Cartan_datum}
    Fix sets $I$ and $J$, possibly empty. An {\bfseries enhanced generalized Cartan datum} is a tuple $(\h,\h^\vee,I,J,\alpha_i,h_i,\gamma_j,c_j,b_j)$ such that
    \begin{itemize}
        \item $(\h,\h^\vee,I,\alpha_i,h_i)$ is a generalized Cartan datum.
        \item $b_j,c_j\in \h$ and $\gamma_j\in\h^\vee$ with $\gamma_i(c_i)=0$ and $\gamma_i(b_i)=-1$.
        \item If $i\in I_{>0}$, then $\gamma_j(h_i)\in\Z_{\le 0}$.
        %\item $\gamma_j(h_i),\beta_j(h_i)\le 0$
        \item We assume the finiteness condition (\ref{eq:finiteness_condition}) for the joint multiset of {\bfseries simple roots} $\alpha_i,n\gamma_j$, $n\in\Z_{\ge 0}$.
        \item $\alpha_i(c_j)=0$ if and only if $\gamma_j(h_i)=0$.
        \item $\gamma_i(c_j)=0$ if and only if $\gamma_j(c_i)=0$.
        \item $\alpha(c_j)=0$ implies $\alpha(b_j)=0$ for any $\alpha\in\{\alpha_i,\gamma_k:i\in I,k\in J\setminus j\}$.
    \end{itemize}
\end{definition}
% \begin{remark}
%     Note that we impose no conditions on the elements $\beta_j$, which will serve as the elements $L_0$ in Virasoro algebras. The reason is that the root corresponding to the $j$-th Virasoro algebra will be $c_j$, its central charge.
% \end{remark}
\begin{remark}
    For how the last condition is used, see part (ii) in the proof of Proposition \ref{p:ideal_in_g_tilde}, Computation \ref{comp:why_not_submodules_of_Vermas}, and discussions around (\ref{eq:special_Virasoro_condition}) and (\ref{eq:generic_Virasoro_condition}).
\end{remark}

Now, it is time to define VBKM algebras. Let $\mc F$ be an enhanced generalized Cartan datum, and let $\mc F':=(\h,\h^\vee,I,\alpha_i,h_i)$ be its generalized Cartan subdatum. We define the Lie algebra $\tilde\g(\mc F)$ generated by $\tilde\g(\mc F')$, $L_n^{j}$, $c_j$ with the following additional relations:
\begin{align*}
[L_n^j,L_m^j]&=(n-m)L_{n+m}^j+\delta_{n,-m}(n^3-n)c_j,\\
[\h,\h]&=0,\\
[h_i,L^j_n]&=n\gamma_j(h_i)L^j_n,\\
L_0^j&=b_j,\\
[L^j_n,L^k_m]&=0,\qquad nm<0, k\ne j,\\
[e_i,L^j_n]&=[f_i,L^j_{-n}]=0,\qquad n<0.
\end{align*}

To prove a statement analogous to Theorem \ref{t:tilde_g_facts}, we need a little preparation. We will use a method similar to the one used in the proofs of \cite[Proposition 1.1]{Jur96} and \cite[Theorem 1.2]{Kac90}. For $i\in I$, let $\g_i$ be the algebra generated by $e_i,h_i,f_i$ with the imposed relations (hence isomorphic to $\sl_2$ or Heisenberg), and for $j\in J$, let $\Vir^j$ be the Virasoro algebra generated by $L_n^j$ and $c_j$. Note that the central charge of $\Vir^j$ equals $12c_j$. We have usual triangular decompositions
    \begin{align*}
        \g_i&=\n_{i,-}\oplus\h_i\oplus\n_{i,+},\\
        \Vir^j&=\Vir^j_{<0}\oplus \lge L_0^j,c_j\rge\oplus \Vir_{>0}^j.
    \end{align*}
    
    Let $G$ be the free product of all $\g_i$ and $\Vir^j$ and $G_-$ be the free product of all $\n_{i,-}$ and $\Vir^j_{<0}$. Choose $\lambda\in \h^*$, and let 
    $$
    \widetilde M(\lambda):=U(G_-),
    $$
    which also equals the free product of $U(\n_{i,-})$ and $U(\Vir^j_{<0})$. We will construct a representation of $G$ on $\widetilde M(\lambda)$ depending on $\lambda$ and show that it descends to a representation of $\tilde\g(\mc F)$.

    It is enough to specify how generators act. We let the generators of $G_-$ act by left multiplication. The elements of $\h$ act semisimply by
    \begin{align*}
        h.1&=\lambda(h),\\
        [h,f_i]&=-\alpha_i(h)f_i,\\
        [h,L_n^j]&=n\gamma_j(h)L_n^j,\qquad n<0.
    \end{align*}
    
    For the generators of $G_+$ and $\h$, we define them to act by derivations on $\widetilde M(\lambda)$ as an algebra. For this, it is enough to specify where they send the generators and show that they preserve the relations in $G_-$. We set:
    \begin{align*}
        e_i.f_j&=\delta_{i,j}h_i.1=\delta_{i,j}\lambda(h_i),\\
        e_i.L_n^j&=0,\\
        L_m^j.f_i&=0,\\
        L_m^j.L_n^{j_0}&=\delta_{j,j_0}\left[(m-n)L_{m+n}^j+\delta_{m,-n}(m^3-m)c_j\right].1,
    \end{align*}
    where the value in the last line is identified by induction on $m\le 0$ using $m+n>m$.

\begin{lemma}\label{l:tilde_g_Virasoro_representation}
    The just defined operators on $\widetilde M(\lambda)$ form a representation of $\tilde \g(\mc F)$.
\end{lemma}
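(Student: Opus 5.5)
We follow the strategy of \cite[Theorem 1.2]{Kac90} and \cite[Proposition 1.1]{Jur96}. The argument has three stages: the operators are well defined, they give a representation of the free product $G$, and that representation satisfies the extra defining relations of $\tilde\g(\mc F)$.

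\emph{Well-definedness.} Since $\widetilde M(\lambda)=U(G_-)$ is the free product of the algebras $U(\n_{i,-})$ and $U(\Vir^j_{<0})$, a derivation is determined by its values on generators. It is well defined provided it respects the relations of each factor. The factors $U(\n_{i,-})$ are polynomial rings in one variable, so they impose no condition. The only nontrivial check is that, for $m>0$, the assignment $L_n^j\mapsto L_m^j.L_n^j$ respects $[L_n^j,L_{n'}^j]=(n-n')L^j_{n+n'}$ for $n,n'<0$. For this we first interpret the recursive definition. For $m>0$, the action of $L_m^j$ on $U(\Vir^j_{<0})\subset\widetilde M(\lambda)$ should be the usual action of $L_m$ on the Virasoro Verma module $M(12c_j(\lambda),\lambda(b_j))$, transported via $U(\Vir^j_{<0})\cong M$ and extended by the derivation rule. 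The required compatibility then becomes the Jacobi identity inside $\Vir^j$, evaluated on the vector $1$. We prove this by induction on $m+n$, in the same order as the recursion, mirroring the standard construction of Verma modules.

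\emph{Relations inside each factor.} Next we check that the relations of each $\g_i$ and each $\Vir^j$ hold as operator identities; this gives a representation of $G$. Since $\h$ acts semisimply and every other generator acts either by left multiplication or by a derivation, the commutator of a raising operator with left multiplication by a lowering generator is again expressible through these rules. Take for instance $x\in\{e_i,L^j_m\}$ with $m>0$ and $y\in\{f_k,L^{j'}_n\}$ with $n<0$. The identity $[x,y]=[x,y]_{\mathfrak g}$ as operators holds by the Leibniz rule:
\[
x.(y\,u)=(x.y)\,u+y\,(x.u),
\]
where $x.y$ has been defined to be $[x,y]$ evaluated through $\h$ acting on the remaining factor. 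Strictly, $x.y$ is an element that must act as the bracket; this is the point where $\h$-semisimplicity and the weights $-\alpha_i$ and $n\gamma_j$ enter. For $[L_m^j,L_{m'}^j]$ with $m,m'>0$, both sides are derivations once the bracket is accounted for, so it suffices to compare them on generators, which reduces to the Virasoro Verma module structure already used above. The $\h$-relations follow from weight considerations: $[h,x]=\deg(x)(h)\,x$ holds for every generator $x$, using $L_0^j=b_j$ and $\gamma_j(b_j)=-1$, which give $[b_j,L^j_n]=-nL_n^j$. We also use $\gamma_j(c_j)=0$, which makes $c_j$ act centrally on the $\Vir^j$ factor.

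\emph{Cross relations.} It remains to check the relations of $\tilde\g(\mc F)$ that are not internal to a factor. The relations $[e_i,f_k]=\delta_{ik}h_i$ are exactly as in Kac's proof. For the relations $[e_i,L_n^j]=0$ and $[L_m^j,f_i]=0$ with $n<0<m$, and $[L_m^j,L_n^k]=0$ with $mn<0$ and $k\ne j$, consider the commutator of a derivation $x$ with left multiplication by $y$. It equals left multiplication by $x.y$, and $x.y=0$ by definition. The relations of the form $[e_i,L^j_{-n}]$ for $n>0$, the analogous $f$-versions, and the relations between two lowering operators hold automatically, since left multiplications generate a free product and the required bracket is imposed only for $mn<0$. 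The condition $[h_i,L_n^j]=n\gamma_j(h_i)L_n^j$ on raising operators is a weight check.

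\emph{Main obstacle.} I expect the main difficulty to be the well-definedness of the recursively defined $L^j_m.L^j_n$ together with the relation $[L_m^j,L_{m'}^j]=(m-m')L_{m+m'}^j+\delta_{m,-m'}(m^3-m)c_j$ for positive $m,m'$. For this I would simply identify $U(\Vir^j_{<0})\cdot 1$ with the Virasoro Verma module and appeal to its existence, for example via $\operatorname{Ind}$ from $\Vir_{\ge0}$. The derivation extension to the free product is then formal.
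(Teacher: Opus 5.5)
Your three-stage outline matches the paper's. But your resolution of the ``main obstacle''---identify $U(\Vir^j_{<0})\cdot 1$ with $M(12\lambda(c_j),\lambda(b_j))$, then extend to the free product by the derivation rule---does not work. (The paper's appendix uses the same shortcut for $\Vir\ltimes\Heis$, and it fails there for the same reason.)

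The raising operators cannot act by derivations of $U(G_-)$. For a derivation $D$ one has $[D,\ell_a]=\ell_{D(a)}$. So $[L_m^j,\ell_{L^j_{-m}}]$ would be multiplication by the scalar $D(L^j_{-m})=2m\lambda(b_j)+(m^3-m)\lambda(c_j)$, obtained by evaluating at $1$. It must instead act as $2mb_j+(m^3-m)c_j$, whose eigenvalue on $L^j_{-m}\cdot 1$ is larger by $2m^2$, since $\gamma_j(b_j)=-1$.

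The correct rule is the twisted one you half-state in your second stage: $x.(yu)=y(x.u)+[x,y].u$ for generators $y$ of $G_-$. Here $[x,y]$ acts on the tail $u$ as an \emph{operator}: through weights if it lies in $\h$, and recursively if it is a raising mode (e.g.\ $[L^j_2,L^j_{-1}]=3L^j_1$). Under this rule the action on mixed words is not determined by the single Verma module $M(12\lambda(c_j),\lambda(b_j))$. Any representation of $\tilde\g$ on $U(G_-)$ of the required kind must satisfy
\[
L_1^j.(L_{-1}^jf_i)=2(\lambda-\alpha_i)(b_j)\,f_i,\qquad L_2^j.(L_{-1}^jf_iL_{-1}^j)=6\lambda(b_j)\,f_i.
\]
``Verma action on each factor plus Leibniz'' instead gives $2\lambda(b_j)f_i$ and $0$. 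The first discrepancy is real because $\alpha_i(b_j)\ne0$ is allowed whenever $\alpha_i(c_j)\ne0$.

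For the same reason, ``both sides are derivations, so compare on generators'' and ``the Jacobi identity evaluated on the vector $1$'' are not enough. You need two operator statements, valid for arbitrary tails:
\begin{itemize}
\item $[D,\ell_y]=0$ for $D=[L_m^j,L_{m'}^j]-(m-m')L^j_{m+m'}$ and every generator $y$ of $G_-$;
\item the raising operators map the two-sided ideal of Virasoro relations into itself.
\end{itemize}

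There are two ways to repair this. The first is what the paper's main proof computes: the Jacobi identity with induction on $m+n$, read as identities between operators on $\widetilde M(\lambda)$ rather than as values at $1$.

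The second keeps your Verma idea but induces from a nontrivial $\Vir^j_{\ge0}$-module instead of a one-dimensional one. Write $U(G_-)=U(\Vir^j_{<0})\otimes R_j$, where $R_j$ is spanned by $1$ and the alternating words whose first block is not in $U(\Vir^j_{<0})$. Raising modes of $\Vir^j$ commute past that first block, and $b_j,c_j$ act by weights. So, by induction on word length, $R_j$ becomes a $\Vir^j_{\ge0}$-module, and $U(G_-)\cong\operatorname{Ind}_{\Vir^j_{\ge0}}^{\Vir^j}R_j$ compatibly with the twisted rule. PBW for $U(\Vir^j)$ then gives well-definedness and all internal Virasoro relations at once. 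The relations $[L_m^j,\ell_y]=0$ for $y$ outside $\Vir^j_{<0}$ hold by construction, and the same device handles each $\g_i$.

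Your treatment of the $\h$-relations and of the remaining cross relations is fine.
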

\begin{proof}
    First, we prove that the operators are well-defined, i.e. they preserve all the relations in $G_-$. The only relations in $G_-$ come from Virasoro relations. So, for $m,n>0$ we check:
    \begin{align*}
        [h,L_n^jL_m^{j}-L_m^{j}L_n^j-(n-m)L_{m+n}]&=(m+n)(L_n^jL_m^{j}-L_m^{j}L_n^j-(n-m)L_{m+n}),\\
        [e_i,L_n^jL_m^{j}-L_m^{j}L_n^j-(n-m)L_{m+n}]&=0,\\
    \end{align*}
    and all left multiplications by elements in $G_-$  preserve the ideal of relations by the very definition of an ideal. It remains to check the action of positive Virasoro operators: for $k\ge0$ and $m,n\le 0$, we compute
    \begin{align*}
        [L_k^{j_0},(n-m)L_{m+n}]&=(n-m)\delta_{j,j_0}\left[(k-m-n)L_{m+n+k}^j+\delta_{k,-m-n}(k^3-k)c_j\right]
    \end{align*}
    and
    \begin{align*}
        [L_k^{j_0},L_n^jL_m^{j}-L_m^{j}L_n^j]&=[[L_k^{j_0},L_n^j],L_m^j]+[L_n^{j},[L_k^{j_0},L_m^j]]\\
        &=\delta_{j,j_0}(k-n)[L_{k+n}^j,L_m^j]+\delta_{j,j_0}(k-m)[L_{n}^j,L_{k+m}^j]\\
        &=(n-m)\delta_{j,j_0}\left[(m+n+k)L_{m+n-k}^j+\delta_{k,-m-n}(k^3-k)c_j\right],
    \end{align*}
    where in the second equality we used that $\gamma_j(c_j)=0$, hence commutativity of $c_j$ with $L_m^j$ and $L_n^j$, and the third equality follows from induction on $k<0$ with the usage of $k+n>k$ and $k+m>k$. The two expressions are equal, which proves well-definiteness.
    
    Now, we prove that the defined operators satisfy the relations in $\tilde \g(\mc F)$. Recall that for any associative algebra $A$ and its derivation $D$, we have
    $$
    [D,a]=D(a),\qquad \text{for all }a\in A,
    $$
    where we regarded $a$ and $D(a)$ as left multiplications by these elements. Applying it to our situation, we get
    \begin{align*}
        [e_i,f_i].(-)&=(e_i.f_i).(-)=\delta_{ij}h_i.(-),\\
        [e_i,L_n^j].(-)&=(e_i.L_n^j).(-)=0,\qquad n<0,\\
        [L_m^j,f_i].(-)&=(L_m^j.f_i).(-)=0,\qquad m>0,\\
        [L_m^j,L_n^{j_0}].(-)&=(L_m^j.L_n^{j_0}).(-)=\delta_{j,j_0}\left[(m-n)L_{m+n}^j+\delta_{m,-n}(m^3-m)c_j\right].(-).
    \end{align*}
    From this and earlier definition of the action of $\h$, we get all the relations of $\tilde\g(\mc F)$ except the Virasoro relations between $[L_m^j,L_n^{j_0}]$ for $n,m\ge 0$ or $n,m<0$.

    We check these relations separately. The case $n,m<0$ is evident since the operators $L_m^j,L_n^{j_0}$ are left multiplications in the universal enveloping algebra of the Lie algebras where the desired relations are satisfied.

    Let us deal with the case $m,n>0$. Since $[L_m^j,L_n^{j_0}]-\delta_{j,j_0}(m-n)L_{m+n}^j$ is a derivation, it is enough to check that it vanishes on generators. Vanishing on $f_i$ is evident by commutativity of positive Virasoro operators with $f_i$. Thus, it remains to check that
    $$
    ([L_m^j,L_n^{j_0}]-\delta_{j,j_0}(m-n)L_{m+n}^j).L_k^{j_1}=0
    $$
    for $k<0$. Note that it is zero unless $j=j_0=j_1$, so let us assume this. We run induction on $m+n\le 0$. The base case $m=n=0$ is clear. We have
    \begin{align*}
        [L_m^j,L_n^{j}].L_k^{j}&=[[L_m^j,L_n^{j}],L_k^{j}].1\\
        &=[[L_m^j,L_k^j],L_n^j].1+[L_m^j,[L_n^j,L_k^{j}]].1\\
        &=(m-k)[L_{m+k}^j,L_n^j].1+(n-k)[L_m^j,L_{n+k}^j].1\\
        &=(m-n)\left[(m+n-k)L_{m+n+k}-\delta_{m+n+k,0}(k^3-k)c_j\right].1.
    \end{align*}
    where in the third line we used that $\gamma_j(c_j)=0$, whence the commutativity with the central charge, and the last line follows from the induction hypothesis and the known case $m\ge0, n<0$. We get the same result with $(m-n)L_{m+n}^j.L_k^{j_1}$, so we are done.
\end{proof}
\begin{theorem} \label{t:tilde_g_Virasoro_facts}
Let $\mc F$ be an enhanced generalized Cartan datum. Then
    \begin{enumerate}[label=(\roman*)]
        \item The Lie algebra $\tilde \g(\mc F)$ is non-zero.
        \item Let $\n_+$ ($\n_-$) be the subalgebra of $\tilde \g(A)$ generated by $e_i$ and $L_n^j$ ($f_i$ and $L_{-n}^j$) for $i\in I$ and $n>0$. We have a direct sum decomposition
        $$
        \tilde \g(\mc F)=\tilde\n_-\oplus\h\oplus\tilde\n_+.
        $$
        Moreover, $\h$ acts semisimply on $\tilde \g(\mc F)$ and $\n_+$ is freely generated by $e_i$ and 
        $\Vir_{>0}^j:=\lge L_n^j:n>0\rge$.
        \item There is an involution $\omega$ on $\tilde\g(\mc F)$ with
        \begin{align*}
            \omega(e_i)&=-f_i,\\
            \omega(h_i)&=-e_i,\\
            \omega(h)&=-h,\\
            \omega(L_n^j)&=-L_{-n}^j.
        \end{align*}
        \item There exists the largest ideal $\r$ in $\tilde \g(A)$ which does not intersect $\h$. We have 
        $$
        \r=\r\cap \tilde\n_+\oplus \r\cap\tilde\n_-.
        $$
    \end{enumerate}
\end{theorem}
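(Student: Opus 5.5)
We follow the standard Kac--Jurisich argument, with the representation $\widetilde M(\lambda)$ of Lemma \ref{l:tilde_g_Virasoro_representation} in place of the Verma-type module used in \cite[Theorem 1.2]{Kac90}. For (i), the lemma gives a representation of $\tilde\g(\mc F)$ on $\widetilde M(\lambda)=U(G_-)$ in which $h\in\h$ acts on $1$ by $\lambda(h)$. Choosing $\lambda\ne0$ shows that $\h$ (and in particular $\tilde\g(\mc F)$) is nonzero. More precisely, the map $\h\to\tilde\g(\mc F)$ is injective, since $\lambda$ ranges over all of $\h^*$.

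For (ii), we first show that $\tilde\g(\mc F)=\tilde\n_-+\h+\tilde\n_+$. The defining relations show that this sum is stable under $\operatorname{ad}$ of every generator:
\begin{itemize}
\item $[e_i,\tilde\n_-]\subset\tilde\n_-+\h$ by the Jacobi identity and induction on the length of brackets, using $[e_i,f_j]=\delta_{ij}h_i$ and $[e_i,L^j_{-n}]=0$;
\item $[L^j_m,L^j_{-n}]$ lands in $\Vir^j$, whose pieces lie in $\tilde\n_\pm$ or in $\h$, because $L^j_0=b_j$ and $c_j\in\h$;
\item $[L^j_m,L^k_{-n}]=0$ for $k\ne j$, and $\h$ acts diagonally.
\end{itemize}
Since the sum contains the generators, it is all of $\tilde\g(\mc F)$. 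Semisimplicity of $\h$ follows because every generator is an $\h$-weight vector, with weights $\pm\alpha_i$ and $n\gamma_j$.

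Directness and freeness come from the module $\widetilde M(\lambda)$. The map $\tilde\n_-\to\widetilde M(\lambda)$, $x\mapsto x.1$, composed with the natural map from the free product $G_-$ onto $\tilde\n_-$, is the inclusion $G_-\hookrightarrow U(G_-)$. Hence $\tilde\n_-$ is isomorphic to the free product of the $\n_{i,-}$ and the $\Vir^j_{<0}$, that is, freely generated by them.

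Now suppose $n_-+h+n_+=0$. Apply this element to $1$. The piece $n_+$ acts as a derivation killing $1$, so $n_+.1=0$, while $h.1=\lambda(h)$. This gives $n_-+\lambda(h)=0$ in $U(G_-)$. Comparing the degree-zero and positive-degree parts gives $n_-=0$ and $\lambda(h)=0$ for all $\lambda$, so $h=0$. Then $n_+=0$ follows by applying the involution $\omega$ from (iii), which maps $n_+$ into $\tilde\n_-$. So (iii) must be proved first, or at least independently. The same $\omega$ transports freeness from $\tilde\n_-$ to $\tilde\n_+$.

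For (iii), we define $\omega$ on generators and check that it preserves each defining relation. Note that the statement's $\omega(h_i)=-e_i$ is presumably a typo for $\omega(e_i)=-f_i$, $\omega(f_i)=-e_i$, $\omega(h)=-h$. The checks are:
\begin{itemize}
\item $[h_i,L^j_n]=n\gamma_j(h_i)L^j_n$ maps to $[-h_i,-L^j_{-n}]=n\gamma_j(h_i)L^j_{-n}$, which is consistent.
\item The Virasoro relation maps to $[L_{-n},L_{-m}]=(-n+m)\cdot(-L_{-n-m})+\dots$. This matches because the central term $(n^3-n)c_j$ is odd in $n$ and $\omega(c_j)=-c_j$.
\item $L_0^j=b_j$ is preserved, since both sides change sign.
\item The mixed relations for $nm<0$ and $[e_i,L^j_n]=[f_i,L^j_{-n}]=0$ are swapped into each other.
\end{itemize}
Thus $\omega$ defines an endomorphism, and $\omega^2=\mathrm{id}$ on generators.

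For (iv), every ideal $\mathfrak a$ is $\h$-stable and hence a sum of weight spaces, by semisimplicity in (ii). If $\mathfrak a\cap\h=0$, then $\mathfrak a$ has no weight-zero part. The finiteness condition ensures that $\tilde\g(\mc F)_0=\h$, so weight-zero elements lie in $\h$, and therefore $\mathfrak a=(\mathfrak a\cap\tilde\n_+)\oplus(\mathfrak a\cap\tilde\n_-)$. The sum of all ideals meeting $\h$ trivially again has only nonzero weights, so it is an ideal meeting $\h$ trivially; this sum is $\r$ and has the stated decomposition.

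The main obstacle is the weight-zero claim. It needs every nonzero weight of $\tilde\n_\pm$ to be nonzero, i.e. that a positive combination of $\alpha_i$ and $n\gamma_j$ with $n>0$ is never $0$. We deduce this from the finiteness condition (\ref{eq:finiteness_condition}) for the multiset of simple roots: if a nontrivial positive combination were $0$, then its multiples would give infinitely many expressions of $0$. The step of (ii) showing that $n_-+h+n_+=0$ forces $h=0$ also deserves care.
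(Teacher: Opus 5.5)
Your proposal is correct and follows the paper's proof essentially step for step: the module $\widetilde M(\lambda)$ for non-vanishing and for freeness of $\tilde\n_-$, the involution $\omega$ to transfer freeness to $\tilde\n_+$ (you are right that $\omega(h_i)=-e_i$ should read $\omega(f_i)=-e_i$), the Kac--Moody-style spanning argument, and $\h$-semisimplicity for (iv). The one variation is that you get directness by evaluating $n_-+h+n_+$ on $1$ (Kac's original argument) rather than by the paper's separation of weights; you also make explicit, via the finiteness condition, that the cone spanned by the $\alpha_i$ and $n\gamma_j$ is pointed, which the paper's directness argument and its proof of (iv) use only tacitly.

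Two minor points. First, once $n_-=0$ and $h=0$ you get $n_+=0$ directly, so (iii) is not needed for directness. Second, in the spanning step the induction for $L^j_m$ should carry the stronger claim $[L^j_m,\tilde\n_-]\subset\tilde\n_-+\h+\Vir^j_{>0}$. A bracket such as $[L^j_3,L^j_{-1}]=4L^j_2$ produces a positive element that must then be bracketed with further negative generators, and these send $\Vir^j_{>0}$ into $\Vir^j$.
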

\begin{proof}
    We start with $(i)-(iii)$ using the representation $\widetilde M(\lambda)$ of $\tilde \g(\mc F)$ studied in Lemma \ref{l:tilde_g_Virasoro_representation}. The operators from $\h$ act nontrivially, which implies that the representation is nontrivial, hence non-zero. Moreover, by construction, the map
    $$
    U(\tilde \n_-)\to \widetilde M(0)
    $$
    sending $x$ to $x.1$ is a bijection. The quotient map $\widetilde M(0)\to U(\tilde\n_-)$ is inverse to it, which proves that $U(\tilde\n_-)\simeq U(G_+)$, showing that $\tilde \n_-$ is freely generated by $f_i$ and $\Vir^j_{<0}$.

    It is a straightforward check that the map defined in $(iii)$ satisfies all the relations of $\tilde\g(\mc F)$, and hence gives an involution on the algebra. This involution swaps $\tilde\n_+$ and $\tilde\n_-$, which makes the just proven statements true for $\tilde\n_+$ as well.

    Using the relations between generators of $\tilde\n_+$ and $\tilde \n_-$ like in the standard case of Kac-Moody Lie algebras, we can see that $\g=\tilde\n_-+\h+\tilde\n_+$. This sum is direct since the $\h$-weights of $\n_+$ lie in the non-zero part of $\Z_{\ge 0}\lge \alpha_i,\gamma_j,\beta_j\rge$, while the weights of $\tilde\n_+$ lie in the non-zero part of $\Z_{\le 0}\lge \alpha_i,\gamma_j,\beta_j\rge$.

    For $(iv)$, note that because of the semisimplicity of the $\h$-action on $\tilde \g(\mc F)$, any ideal is a semisimple representation of $\h$ as well. This shows the last decomposition and that if two ideals do not intersect $\h$, then their sum does not, proving existence of the largest such ideal.
\end{proof}

\subsection{Ideals in $\tilde{\g}(\mc F)$}\label{s:ideals_in_tilde_g}
For the rest of this section, we fix an enhanced generalized Cartan datum $\mc F$ and suppress it from our notation (so that $\tilde\g:=\tilde\g(\mc F)$). We turn to studying the largest ideal of $\tilde\g$ and elements it contains. Consider four types of representations:
\begin{enumerate}[label=(\roman*)]
    \item $\g_i$ acting on $U(\g_i)f_k$ for $k\ne i$.
    \item $\g_i$ acting on $U(\g_i)L_n^j$ for $n<0$.
    \item $\Vir^j$ acting on $U(\Vir^j)f_i$ with $\alpha_i(c_j)=\alpha_i(b_j)=0$. 
    \item $\Vir^j$ acting on $U(\Vir^j)L_n^k$ for $n<0$, $k\ne j$, with $\gamma_k(c_j)=\gamma_k(b_j)=0$.
\end{enumerate}
All these modules are isomorphic to Verma modules of the corresponding Lie algebra: $M(-\alpha_k(h_i))$, $M(n\gamma_j(h_i))$, $M(0,0)$, and $M(0,0)$, respectively. By the standard structure theory of Verma modules, each of these modules possesses the largest proper submodule. Let $X_-$ be the set containing
\begin{itemize}
    \item all highest weight vectors generating all these proper submodules in $\tilde\n_-$,
    \item All $L_m^j.f_i$ and $L_m^j.L_n^k$ for $m<0$ in cases $(iii)-(iv)$.
\end{itemize}
Define $X_+\subset \tilde \n_+$ in a similar way (so that $X_+=\omega(X_-)$).

\begin{proposition}\label{p:ideal_in_g_tilde}
    The ideal $\tilde I_-$ ($\tilde I_+$) of $\tilde \g$ generated by $X_-$ $(X_+)$ is contained in $\tilde\n_-$ ($\tilde\n_+$). In particular, $\tilde I:=\tilde I_-\oplus \tilde I_+$ is an ideal of $\tilde \g$ not intersecting $\tilde\h$.
\end{proposition}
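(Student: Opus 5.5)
By the involution $\omega$ of Theorem \ref{t:tilde_g_Virasoro_facts}(iii), which exchanges $X_-$ and $X_+$, it is enough to treat $\tilde I_-$. The plan follows the classical argument recalled before Proposition \ref{p:BKM_interpretation_using_reps_of_simple_pieces}. We show that the subspace $V$ of $\tilde\n_-$ spanned by $X_-$ is stable under $\operatorname{ad}$ of every generator of $\tilde\n_+$, that is, under $e_i$ and $L_m^j$ for $m>0$, and that it is an $\h$-weight subspace. By PBW and the decomposition $\tilde\g=\tilde\n_-\oplus\h\oplus\tilde\n_+$, the ideal generated by $V$ is then $U(\tilde\g)V=U(\tilde\n_-)U(\h)U(\tilde\n_+)V=U(\tilde\n_-)V$. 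This lies in $\tilde\n_-$, since $\tilde\n_-$ is a subalgebra. It is enough to show that $U(\tilde\n_+)V$ stays inside the $U(\tilde \n_-)$-span of $X_-$; it is convenient to enlarge $V$ to the span of $U(\tilde\n_-)$-translates within each Verma module and check the same stability.

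The key steps are as follows. Each element of $X_-$ lies in one of the four Verma-type modules of types (i)--(iv), generated by a vector $y\in\{f_k, L_n^k\}$ under a subalgebra $\mathfrak s\in\{\g_i,\Vir^j\}$. Let $N\subset U(\mathfrak s)y$ denote the maximal proper submodule.

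\emph{Step 1 (stability under $\mathfrak s$).} $N$ is an $\mathfrak s$-submodule by definition. In cases (iii)--(iv) the extra elements $L_m^j.y$ with $m<0$ lie in the augmentation part $U(\Vir^j_{<0})_{+}y$. This is exactly the maximal submodule of $M(0,0)$, since $L_{-1}$ and its iterates are singular-type there. So $N$ is stable under $\mathfrak s$.

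\emph{Step 2 (generators outside $\mathfrak s$ kill $y$ and commute with $\mathfrak s_-$).} Let $x$ be a positive generator not in $\mathfrak s$. The defining relations give $[x,\mathfrak s_-]=0$ for $e_k$ or $L^{j'}_m$ with $j'\neq j$ and $k\ne i$. Hence $[x,u y]=u[x,y]$ for $u\in U(\mathfrak s_-)$. Now $[x,y]$ is either $0$ or a Cartan element: it is $h_k$ when $x=e_k$ and $y=f_k$, and it is a combination of $L_0^k=b_k$ and $c_k$ when $x=L_{-n}^k$ and $y=L_n^k$. In the latter case $u\,h$, viewed as $\operatorname{ad}(u)h$, equals $\pm\lambda(h)\,u'$ for a shorter word $u'$ applied to generators of $\mathfrak s_-$.

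\emph{Step 3 (the weight conditions).} In the situations listed above, these terms vanish because of the hypotheses of the enhanced Cartan datum. In case (i) the argument is the classical one: $\operatorname{ad}(f_i)^{N}h_k$ is proportional to $a_{ki}\,\operatorname{ad}(f_i)^{N-1}f_i$, which vanishes for $N\ge 2$. For $N=1$ we use $a_{ik}=0\Leftrightarrow a_{ki}=0$, as in the classical computation. In cases (iii)--(iv) we need $\operatorname{ad}(u)(b_k)$ and $\operatorname{ad}(u)(c_k)$ to vanish for $u\in U(\Vir^j_{<0})$. This follows from $\gamma_k(c_j)=0$ and $\gamma_k(b_j)=0$, together with the symmetry $\gamma_j(c_k)=0\Leftrightarrow\gamma_k(c_j)=0$ and the last axiom of Definition \ref{def:enh_gen_Cartan_datum}, which give $[L^j_{m},b_k]=-m\gamma_j(b_k)L_m^j=0$ and likewise for $c_k$. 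Case (ii) is analogous, using $[h_i,L_n^j]=n\gamma_j(h_i)L_n^j$ together with the condition $\gamma_j(h_i)\in\Z_{\le0}$.

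\emph{Main obstacle.} The hardest step is Step 3 in cases (ii)--(iv). There a positive Virasoro generator of another copy, or $e_i$ against $L_n^j$, produces Cartan terms $b_k, c_k$, and we must check carefully that every resulting term either vanishes by the "$\alpha(c_j)=0\Rightarrow\alpha(b_j)=0$" axiom or lands back in $N$. I would first handle the case where $X_-$ is a single vector of minimal weight and verify the vanishing by direct computation, and then pass to general elements of $N$ by induction on the length of $u$. Once stability is established, $\tilde I_-\subset\tilde\n_-$ follows. Then $\tilde I=\tilde I_-\oplus\tilde I_+$ is an ideal: it is a sum of ideals, and the sum is direct and misses $\h$ by the triangular decomposition of Theorem \ref{t:tilde_g_Virasoro_facts}(ii).
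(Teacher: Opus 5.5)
\textbf{Verdict: there is a genuine gap.} Your overall strategy is the paper's. You reduce to $\tilde I_-$ via $\omega$ and show that positive generators send $X_-$ into the $U(\tilde\n_-)$-span of the relevant maximal submodules. You then conclude from $U(\tilde\g)=U(\tilde\n_-)U(\h)U(\tilde\n_+)$. The gap is in Step 2.

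There you claim that for a positive generator $x\notin\mathfrak s$, the bracket $[x,y]$ is either $0$ or a Cartan element. This is false. In case (ii), take $y=L^j_n$ and $x=L^j_m$ with $m>0$ and $m\neq -n$. Then $[x,y]=(m-n)L^j_{m+n}$, which is neither. This is not a side case: it is the whole content of the paper's case (ii), and ``Case (ii) is analogous'' leaves it undone. Your ``main obstacle'' paragraph also places the difficulty in the wrong spot, since $[e_i,L^j_n]=0$ for $n<0$ by definition.

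Here is what must be checked. Let $N_n:=1+2n\gamma_j(h_i)/a_{ii}$ be the singular exponent of $U(\g_i)L^j_n$, and take $p\ge N_n$. Then $[L^j_m,(\operatorname{ad}f_i)^pL^j_n]=(\operatorname{ad}f_i)^p[L^j_m,L^j_n]$, and there are three subcases.
\begin{itemize}
\item If $m+n>0$, the result is $0$, since $[f_i,L^j_{m+n}]=0$.
\item If $m+n=0$, you get $(\operatorname{ad}f_i)^p(2mb_j+(m^3-m)c_j)$. This vanishes for one of two reasons. Either $p\ge 2$, or $\gamma_j(h_i)=0$, in which case the axioms give $\alpha_i(c_j)=\alpha_i(b_j)=0$.
\item If $m+n<0$, you land in a \emph{different} Verma module $U(\g_i)L^j_{m+n}$, and you need $p\ge N_{m+n}$. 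That is, the singular exponent must not increase with $n$, and this is exactly where $\gamma_j(h_i)\le 0$ is used.
\end{itemize}

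The last subcase also needs $U(\g_i)L^j_{m+n}$ to be reducible, i.e.\ $2(m+n)\gamma_j(h_i)/a_{ii}\in\Z$. The datum as written does not force this, and the paper's own case (ii) assumes it tacitly. For example, with $a_{ii}=4$ and $\gamma_j(h_i)=-1$, the ideal generated by $(\operatorname{ad}f_i)^2L^j_{-2}\in X_-$ contains $b_j$. So you should add that hypothesis explicitly.

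The same oversight occurs in case (iv) for $x=L^k_m$, $y=L^k_n$ with $m\ne -n$, but there it is harmless. You either stay in the augmentation part of another type-(iv) module, or you get $0$ because $L^k_{m+n}$ commutes with $\Vir^j_{<0}$.

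Two smaller points:
\begin{itemize}
\item In case (iii) the Cartan term is $h_i$, coming from $x=e_i$. It is killed by $\gamma_j(h_i)=0$, which follows from $\alpha_i(c_j)=0$, not by the $b_k,c_k$ axioms you cite.
\item The augmentation part is the maximal submodule of $M(0,0)$ because $L(0,0)$ is the trivial module. It is not because iterates of $L_{-1}$ are singular: $L_{-2}v$ is not singular and does not lie in $U(\Vir)L_{-1}v$.
\end{itemize}
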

\begin{proof}
    Choose $x\in X_-$. We will study the action of $\tilde\n_+$ on $x$. We need to consider $4$ cases $(i)-(iv)$ as above for the representation the element $x$ lies in:
    \begin{enumerate}[label=(\roman*)]
        \item It is enough to show that $\tilde\n_+x=0$, for which it is enough to show that the generators of $\n_+$ annihilate $x$. Since $x$ is generated by $f_i$ and $f_k$, it is annihilated by all generators of $\tilde n_+$ other than $e_i$ and $e_k$ since they commute with $f_i$ and $f_k$. Since it is a highest weight vector for $\g_i$, it is annihilated by $e_i$. It remains to check annihilation by $e_k$.
        
        We have two cases here: $\alpha_k(h_i)=0$ or $\alpha_k(h_i)\ne0$. In the first case, $x=[f_i,f_k]$ and
        $$
        [e_k,[f_i,f_k]]=[f_i,h_k]=\alpha_i(h_k)=0,
        $$
        where we used our assumption that $\alpha_i(h_k)=0$ iff $\alpha_k(h_i)=0$.
        
        In the second case, the highest weight is non-zero, which implies that if $\g_j$ was the Heisenberg algebra, the Verma module would be irreducible (see \cite[Proposition 1.7.2]{FLM89}). Therefore, $\g_j=\sl_2$. By representation theory of $\sl_2$, we can write $x=(\operatorname{ad}f_i)^{1-\frac{2a_{ij}}{a_{ii}}}.f_k$. Since $e_k$ commutes with $f_i$, we have
        $$
        e_k.(\operatorname{ad}f_i)^{1-\frac{2a_{ij}}{a_{ii}}}.f_k=(\operatorname{ad}f_i)^{1-\frac{2a_{ij}}{a_{ii}}}.[e_k,f_k]=(\operatorname{ad}f_i)^{1-\frac{2a_{ij}}{a_{ii}}}.h_k.
        $$
        By our assumption, $a_{ik}=\alpha_k(h_i)\ne 0$, which implies that $1-\frac{2a_{ij}}{a_{ii}}\ge 2$. Then
        $$
        (\operatorname{ad}f_i)^{1-\frac{2a_{ij}}{a_{ii}}}.h_k=(\operatorname{ad}f_i)^{-1-\frac{2a_{ij}}{a_{ii}}}[f_i,[f_i,h_k]]=0,
        $$
        as $[f_i,h_k]$ is proportional to $f_i$. This finishes the proof in this case.
        \item Let $X_i^j$ be the set of highest weight vectors in this case. It is enough to show that $\tilde\n_+X_i^j\subset U(f_i)X_i^j$. Using similar arguments as in $(ii)$, we conclude that the generators other than $L_m^j$ for $m\ge 0$ annihilate $X_i^j$, so it is enough to prove that $L_n^jX_i^j\subset U(f_i)X_i^j$ for $n>0$.

        We have two cases: $\gamma_j(h_i)=0$ or $\gamma_j(h_i)\ne 0$. In the first case, recall that out assumptions on the extended Cartan datum imply that $\beta_j(h_i)=0$. Then $h_i$ commutes with $\Vir^j$, which implies that $X_i^j$ is spanned by elements of the form $x=[f_i,L_n^j]$. Using similar arguments as in $(ii)$, we conclude that the generators other than $L_m^j$ for $m\ge 0$ annihilate $x$. We have
        $$
        L_m^j.[f_i,L_n^j]=(m-n)[f_i,L_{m+n}^j],
        $$
        which is $0$ if $m+n\ge 0$ and lies in $U(f_i)X_i^j$ otherwise. So, this case is done.

        Now, assume that $\gamma_j(h_i)\ne 0$. Then, like in $(i)$, we get that $\g_i=\sl_2$ and $X_i^j$ is spanned by elements of the form 
        $$
        x=(\operatorname{ad}f_i)^{1+\frac{2n\gamma_j(h_i)}{a_{ii}}}.L_n^j.
        $$
        Then
        \begin{align*}
            L_m^j.x&=(\operatorname{ad}f_i)^{1+\frac{2n\gamma_j(h_i)}{a_{ii}}}.[L_m^j,L_n^j]\\
            &=(m-n)(\operatorname{ad}f_i)^{1+\frac{2n\gamma_j(h_i)}{a_{ii}}}.L_{m+n}^j+\delta_{m,-n}(m^3-m)(\operatorname{ad}f_i)^{1+\frac{2n\gamma_j(h_i)}{a_{ii}}}c_j
        \end{align*}
        Since $m+n>n$ and $\gamma_j(h_j)/a_{ii}<0$ (otherwise the Verma module would be irreducible), we have
        $$
        1+\frac{2n\gamma_j(h_i)}{a_{ii}}>1+\frac{2(n+m)\gamma_j(h_i)}{a_{ii}},
        $$
        showing that the first summand lies in $U(f_i)X_{i}^j$. The second summand is zero by the same argument as in the end of case $(i)$.
        \item This relation imposes commutativity $[\Vir^j_{< 0},f_i]=0$. By our assumption on the Cartan datum, we also have $c_j(h_i)=b_j(h_i)=0$, hence this relation has already been considered in $(ii)$ above.
        \item This relation imposes commutativities $[L_m^j,L_n^k]=0$. As before, it is enough to check the action of $L_s^k$ for $s\ge0$:
        $$
        L_s^k.[L_m^j,L_n^k]=(s-n)[L_m^j,L_{n+s}^k]+\delta_{n,-s}(s^3-s)[L_m^j,c_k]=0,
        $$
        which shows that the ideal generated by this relation lies in $\tilde\n_+$. This finishes the proof.
    \end{enumerate}
\end{proof}

We define $\g=\g(\mc F):=\tilde \g/\tilde I$ and call it the {\bfseries Virasoro-Borcherds-Kac-Moody algebra} associated to $\mc F$. 

Let us study this algebra in more detail. Note that for the action of $\g_i$, we added the largest submodules of the Verma modules into the ideal $I$, while we only did it with the actions of $\Vir^j$ if the corresponding Virasoro module was $M(0,0)$. A natural question to ask is why we did not add highest weight vectors of other $M(c,h)$ into $I$.

\begin{computation}\label{comp:why_not_submodules_of_Vermas}
    In short: for most such relations, if we add one of them, the ideal will intersect $\tilde\h$.
    
    Let us start with looking at the ideal $J$ generated by some Virasoro highest weight vector of the form $P.f_i$ for some $P\in U(\Vir^j_{<0})$. Then we have
    $$
    [e_i,P.f_i]=P.h_i=-\deg P\cdot \gamma_j(h_i) P.
    $$
    If $\gamma_j(h_i)\ne 0$, then produces a non-zero element of $U(\Vir^j_{<0})$. Applying elements of $\Vir^j_{>0}$ to it, we can reach a non-zero linear combination of $b_j$ and $c_j$, hence the ideal would intersect $\h$.

    Now, we treat $U(\Vir^j_{<0})L_n^k$ for $n<0$. Let $PL_n^k$ be a nontrivial highest weight vector in it with $P\in U(\Vir^j_{<0})$. We have
    \begin{equation}
    [L_{-n}^k,PL_n^k]=P.\left(-2nb_k-(n^3-n)c_k\right)=\deg P\cdot\gamma_j(-2nb_k-(n^3-n)c_k)P,
    \end{equation}
    which by the same arguments as in the previous case implies that
    $$
    \gamma_j(-2nb_k-(n^3-n)c_k)=0.
    $$
    Suppose that $n<-1$. Then 
    $$
    [L_{1}^k,PL_n^k]=(1-n)PL_{n+1}^k,
    $$
    so the same computation gives
    $$
    \gamma_j(-2(n+1)b_k-((n+1)^3-(n+1))c_k)=0.
    $$
    The two equations that we got are linearly independent, hence imply that $\gamma_j(b_k)=\gamma_j(c_k)=0$, i.e. the commutative situation.

    Now, assume that $n=-1$. Then we get $\gamma_j(b_k)=0$. By the definition of an enhanced generalized Cartan datum, it gives $\gamma_k(b_j)=0$. Therefore,
    $$
    U(\Vir^j_{<0})L_{-1}^j\simeq M(\gamma_k(c_j),0),
    $$
    so the relation with $P=L_{-1}^j$ implies all other relations.
\end{computation}

Based on this, we state the following Proposition.
\begin{proposition}\label{p:new_ideal_in_special_VBKM}
    The ideal $\tilde J$ of $\g$ generated by relations 
\begin{itemize}
    \item[(iv')] $[L_{-1}^j,L_{-1}^k]$ and $[L_1^j,L_1^k]$ if  $\gamma_j(b_k)=\gamma_k(b_j)=0$
\end{itemize}
    does not intersect $\h$.
\end{proposition}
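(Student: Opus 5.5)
The plan follows the proof of Proposition \ref{p:ideal_in_g_tilde}. It suffices to show that the ideal of $\tilde\g$ generated by $x:=[L_{-1}^j,L_{-1}^k]$ stays inside $\tilde\n_-$. The ideal generated by $\omega(x)=[L_1^j,L_1^k]$ then stays in $\tilde\n_+$ by applying the involution $\omega$ of Theorem \ref{t:tilde_g_Virasoro_facts}(iii). The sum of the two ideals is therefore an ideal of $\tilde\g$ not meeting $\h$, and its image in $\g=\tilde\g/\tilde I$ is $\tilde J$. Since $\tilde I$ also avoids $\h$ and both ideals are $\h$-graded with zero-weight part only $\h$, the image does not meet $\h$ either.

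Since $\tilde\g=\tilde\n_-\oplus\h\oplus\tilde\n_+$ and $\tilde\n_-$ together with $\h$ preserve $\tilde\n_-$, the ideal generated by $x$ equals $U(\tilde\n_-)\,U(\tilde\n_+)x$. So I would find a subspace $V\subset\tilde\n_-$ containing $x$ that is stable under the generators of $\tilde\n_+$, namely the $e_i$ and the $L_m^{j'}$ with $m>0$. Then the ideal is $U(\tilde\n_-)V\subset\tilde\n_-$. I would take $V$ to be the span of $x$ together with $\tilde\n_-$-elements of small degree produced by applying positive generators to $x$.

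The generators $e_i$ and $L_m^{j'}$ with $j'\neq j,k$ kill $x$, since they commute with $L_{-1}^j$ and $L_{-1}^k$ by the defining relations. For $L_m^j$ with $m\ge 2$, the bracket $[L_m^j,L_{-1}^j]$ lies in $\Vir^j_{>0}$, which commutes with $L_{-1}^k$. The central term $(m^3-m)c_j$ appears only when $m=1$, and there it vanishes because $1^3-1=0$. So the first real computation is
$$[L_1^j,x]=[2L_0^j,L_{-1}^k]=2[b_j,L_{-1}^k]=-2\gamma_k(b_j)L_{-1}^k=0,$$
by the hypothesis $\gamma_k(b_j)=0$. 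Symmetrically, $[L_1^k,x]=2[L_{-1}^j,b_k]=0$ by $\gamma_j(b_k)=0$. For $L_m^j$ with $m\ge 2$, we get $(m+1)[L_{m-1}^j,L_{-1}^k]$ with $m-1>0$, and this vanishes because positive and negative modes of different Virasoro algebras commute.

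Hence $x$ is annihilated by all of $\tilde\n_+$, we may take $V=\C x$, and the ideal generated by $x$ is $U(\tilde\n_-)x\subset\tilde\n_-$. I expect the main subtle step to be confirming that no central or $L_0$ contributions appear, particularly the $\delta_{m,-n}$ term at $m=1$, $n=-1$. The identity $1^3-1=0$ handles this, and that is exactly why $n=-1$ was singled out in Computation \ref{comp:why_not_submodules_of_Vermas}. A secondary point is to check the hypotheses against how $\gamma$ acts on $b$ in $[b_j,L^k_{-1}]=-\gamma_k(b_j)L_{-1}^k$; this follows from $[h,L_n^k]=n\gamma_k(h)L_n^k$ with $n=-1$.
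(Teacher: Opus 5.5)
Your proof is correct and is essentially the paper's argument. The paper likewise checks that $\Vir^j_{>0}$ and $\Vir^k_{>0}$ annihilate $[L_{-1}^j,L_{-1}^k]$, with the only nontrivial case $n=1$ reducing to $\gamma_k(b_j)=0$ and $\gamma_j(b_k)=0$, then concludes that the generated ideal lies in $\n_-$ and treats $[L_1^j,L_1^k]$ symmetrically. Your version is slightly more careful: you also dispose of the generators $e_i$ and $L_m^{j'}$ with $j'\neq j,k$, track the sign in $[b_j,L_{-1}^k]=-\gamma_k(b_j)L_{-1}^k$, and make explicit the descent from $\tilde\g$ to $\g=\tilde\g/\tilde I$.
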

\begin{proof}
    We check how $\Vir^j_{> 0}$ and $\Vir^k_{>0}$ act on the first relation:
    \begin{align*}
        [L_n^j,[L_{-1}^j,L_{-1}^k]]=(n+1)[L_{n-1}^j,L_{-1}^k]=\delta_{n,1}(n+1)\gamma_k(b_j)=0,\\
        [L_n^k,[L_{-1}^j,L_{-1}^k]]=(n+1)[L_{-1}^j,L_{n-1}^k]=-\delta_{n,1}(n+1)\gamma_j(b_k)=0.
    \end{align*}
    Thus, the ideal $J_-$ generated by the first relation in contained in $\n_-$. We get a similar result for the second relation, so we are done.
\end{proof}

See Example \ref{e:special_VBKM_realization}.
\begin{remark}
    This additional ideal $\tilde J$ makes it reasonable to define a VBKM algebra as $\g/\tilde J$. However, we keep our original definition of a VBKM algebra because of its resemblance with BKM algebras (and direct connection to them established in the following sections).
    
    It is quite likely that $\g/\tilde J$ is a more natural definition of a VBKM algebra from other points of view, e.g. because it is the algebra obtained by divided our by all Serre relations: old ones and their Virasoro analogues. As we establish in future sections (see conditions (\ref{eq:special_Virasoro_condition}) and (\ref{eq:generic_Virasoro_condition}) and discussion around them), such algebra is of a special nature as opposed to generic. This can be a subject of future research.
\end{remark}

\begin{definition}
    The {\bfseries Weyl group} $W$ of a VBKM algebra $\g$ is the group generated by simple reflections corresponding to elements in $I_{>0}$.
\end{definition}
Thus, the Weyl group of a VBKM algebra equals the Weyl group of its Kac-Moody subalgebra.
\begin{proposition}\,\label{p:Weyl_group_on_g}
    \begin{enumerate}[label=(\roman*)]
        \item We have the Cartan decomposition
        $$
        \g=\n_-\oplus\h\oplus\n_+
        $$
        for $\n_-:=\tilde\n_-/\tilde I_-$ and $\n_+:=\tilde\n_+/\tilde I_+$.
        \item Define operators
        $$
        n_i:=\exp\operatorname{ad}e_i\cdot \exp\operatorname{ad}(-f_i)\exp\operatorname{ad}e_i,\qquad i\in I_{>0}.
        $$
        They act by automorphisms of $\g$ and its modules in the category $\mc O$, and send the weight/root space $\alpha$ to the weight/root space $s_i(\alpha)$, where $s_i$ is the simple reflection corresponding to $i$.
        \item For any $\alpha$ in the root lattice and $w\in W$, we have
        $$
        \dim\g_{\alpha}=\dim\g_{w(\alpha)}.
        $$
    \end{enumerate}
\end{proposition}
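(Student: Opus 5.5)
Part (i) follows directly from Theorem \ref{t:tilde_g_Virasoro_facts}(ii) and Proposition \ref{p:ideal_in_g_tilde}. Since $\tilde\g=\tilde\n_-\oplus\h\oplus\tilde\n_+$ and $\tilde I=\tilde I_-\oplus\tilde I_+$ with $\tilde I_\pm\subset\tilde\n_\pm$, the quotient splits as
$$
\g=\tilde\n_-/\tilde I_-\oplus\h\oplus\tilde\n_+/\tilde I_+.
$$
Moreover, $\tilde I\cap\h=0$, so $\h$ embeds into $\g$.

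For (ii), fix $i\in I_{>0}$ and rescale $e_i,h_i,f_i$ to an $\sl_2$-triple as in Section \ref{s:BKM_algebras}. The first step is to show that $\operatorname{ad}e_i$ and $\operatorname{ad}f_i$ are locally nilpotent on $\g$. The set of elements on which $\operatorname{ad}e_i$ acts locally nilpotently is a subalgebra, by the Leibniz rule. So it suffices to check the generators $e_k,f_k,h,L_n^j,c_j$.

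On $f_k$ with $k\ne i$, $\operatorname{ad}e_i$ acts by zero. On $\h$ and on $f_i$ it is nilpotent by direct computation. On $L_n^j$ with $n<0$ it acts by zero by the defining relations. On $e_k$ local nilpotence of $\operatorname{ad}e_i$ is exactly the $\omega$-image of case (i) in the definition of $X_-$. The relation that kills the highest weight vector $(\operatorname{ad}f_i)^{1-2a_{ik}/a_{ii}}f_k$ of the Verma module $U(\g_i)f_k$ lies in $\tilde I$, so the image of $U(\g_i)f_k$ in $\g$ is a finite-dimensional $\sl_2$-module. The case $L_n^j$ with $n>0$ works the same way via case (ii). Here the $\g_i$-Verma module $U(\g_i)L_n^j$ has highest weight $n\gamma_j(h_i)\cdot 2/a_{ii}\in\Z_{\le0}$, up to the sign and normalization coming from $\omega$; this integrality is exactly the condition $\gamma_j(h_i)\in\Z_{\le0}$ in Definition \ref{def:enh_gen_Cartan_datum}. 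Its maximal submodule is quotiented out, so the image is finite-dimensional. Hence $\g$ is an integrable $\g_i$-module. The symmetric statements for $f_i$ follow by applying $\omega$.

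For a module $V$ in category $\mc O$, the usual requirement of integrability with respect to each $\g_i$, $i\in I_{>0}$, gives the same local nilpotence. Then $n_i$ is a well-defined automorphism, since the exponential of a locally nilpotent derivation is an automorphism. The identity $n_i(\g_\alpha)=\g_{s_i(\alpha)}$ is the standard $\sl_2$ computation: $n_i$ acts on $\h$ as the reflection $h\mapsto h-\alpha_i(h)\cdot 2h_i/a_{ii}$, and conjugating by $n_i$ turns an $\h$-weight vector of weight $\alpha$ into one of weight $s_i(\alpha)=\alpha-\frac{2\alpha(h_i)}{a_{ii}}\alpha_i$. I would verify this by computing $n_i h n_i^{-1}$ on the three-dimensional algebra $\g_i$ and extending it, using that $h-\frac{2\alpha_i(h)}{a_{ii}}h_i$ commutes with $\g_i$. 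The reflection must also be compatible with the extra directions $b_j,c_j$. This holds because $\h$ is spanned by the $h$'s together with $b_j,c_j$, and the formula for $s_i$ on $\h^\vee$ is dual to the action on $\h$.

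Part (iii) is then immediate: $n_i$ is a linear bijection $\g_\alpha\to\g_{s_i\alpha}$, and $W$ is generated by the $s_i$. The main obstacle is the local nilpotence of $\operatorname{ad}e_i$ on the Virasoro generators $L_n^j$ for $n>0$. This requires checking that the elements of $X_+$ coming from case (ii) really do cut $U(\g_i)L_n^j$ down to a finite-dimensional module, with the correct sign convention for $\gamma_j(h_i)$ under $\omega$. If this sign turns out wrong, I would instead exploit the relation $[h_i,L_n^j]=n\gamma_j(h_i)L_n^j$ directly together with the $\omega$-symmetry.
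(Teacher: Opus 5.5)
Your route is the paper's route. The paper's proof only says that (i) is clear, that $\operatorname{ad}e_i$ and $\operatorname{ad}f_i$ ($i\in I_{>0}$) are locally nilpotent on $\g$ because the Serre relations were quotiented out, that the rest of (ii) is \cite[Lemma 3.8]{Kac90}, and that (iii) follows. Your generator-by-generator check through $X_+=\omega(X_-)$ supplies the details behind those sentences. So does the formula $n_i(h)=h-\alpha_i(h)\,2h_i/a_{ii}$ on $\h=\ker\alpha_i\oplus\C h_i$. Your restriction to integrable modules in $\mc O$ is the correct reading, since on a Verma module $f_i$ is not locally nilpotent.

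One step does not hold as you state it, and the paper silently relies on the same assumption. The sign you worry about is fine: $\gamma_j(h_i)\le 0$ makes $L^j_{-n}$ ($n>0$) a $\g_i$-highest weight vector of weight $-2n\gamma_j(h_i)/a_{ii}\ge 0$ for $2h_i/a_{ii}$. The problem is integrality. Finite-dimensionality of its image in $\g$ requires $2\gamma_j(h_i)/a_{ii}\in\Z$. That is not the condition $\gamma_j(h_i)\in\Z_{\le 0}$ of Definition \ref{def:enh_gen_Cartan_datum} unless, say, $a_{ii}=2$.

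For instance, take $a_{ii}=4$ and $\gamma_j(h_i)=-1$.
\begin{itemize}
\item The $\g_i$-module generated by $L^j_{-1}$ is the Verma module of highest weight $\tfrac12$, which is irreducible.
\item It injects into $\g$, because $[L^j_1,L^j_{-1}]=2b_j\ne 0$ and $\tilde I\cap\h=0$.
\item Hence $\operatorname{ad}f_i$ is not locally nilpotent on $\g$ and $n_i$ is undefined.
\item Also $s_i(-\gamma_j)=-\gamma_j-\tfrac12\alpha_i$ leaves the root lattice, so (iii) fails as well.
\end{itemize}
The paper's own Serre exponents $1+2n\gamma_j(h_i)/a_{ii}$ already presuppose this integrality. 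You should impose $2\gamma_j(h_i)/a_{ii}\in\Z_{\le 0}$ as a hypothesis, or normalize $a_{ii}=2$, rather than claim it follows from the definition. With that hypothesis your argument is complete.
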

\begin{proof}
    The first item is clear. The operators $n_i$ are well-defined on $\g$ because the actions of $e_i,f_i$-s corresponding to indices in $I_{>0}$ are locally nilpotent, which is the case since we have quotiented out by Serre relations. For the rest of $(ii)$, see \cite[Lemma 3.8]{Kac90}. Item $(iii)$ follows from $(ii)$.
\end{proof}

\section{Connection to BKM algebras}\label{s:BKM-VBKM_connections}
\subsection{VBKM algebras as deformations of BKM algebras}\label{s:ultimate_algebra}
To state our main structural results, we need to introduce a bit more notation. Let $\g_{\Vir}$ be the algebra $\g$ we are studying. Let $\g_{\Heis}$ be the BKM algebra associated to the generalized Cartan datum 
$$
(\h,\h^\vee,I\sqcup \bigsqcup\nolimits_{n\in\Z_{> 0}}nJ,\alpha_i,n\gamma_j,h_i,nc_j:n\in\Z_{>0}).
$$
Intuitively, we replace all the Virasoro algebras in the generating set of $\tilde\g$ by Heisenberg algebras. Note that
\begin{itemize}
    \item $\ch\Vir^j=\ch\Heis^j$.
    \item Pairwise relations between $\g_i, \Vir^j$ generating the ideal $I$ are identical to pairwise relations between $\g_i, \Heis^j$ in the BKM algebra $\g_{\Heis}$.
    \item The elements $b_j$ remain in $\h$ and act by scalars on root spaces of the associated BKM algebra. We could think of $\widehat\Heis^j:=\Heis^j\rtimes \C b_j$ being building pieces of the BKM algebra instead of just $\Heis^j$, where $[b_j,x_n^j]=-nx_n^j$ for the standard basis $x_n^j$ of $\Heis^j$.
\end{itemize}

\begin{definition}
    We call an enhanced generalized Cartan datum $\mc F=(\h,\h^\vee,I,J,\alpha_i,h_i,\gamma_j,c_j,b_j)$ {\bfseries symmetrizable} if there exists an automorphism of $\h$ acting on elements $h_i,c_j,b_j$ by rescaling (by possibly distinct constants) such that
    \begin{itemize}
        \item After rescaling, the generalized Cartan matrix of the datum $(\h,\h^\vee,I,J,\alpha_i,h_i)$ becomes symmetric.
        \item After rescaling, $\alpha_i(c_j)=\gamma_j(h_i)$ and $\gamma_k(c_j)=\gamma_j(c_k)$.%, and $\gamma_k(b_j)=\gamma_j(b_k)$.
    \end{itemize}
\end{definition}

% We state the following Conjecture.
% \begin{conjecture}
%     If $\mc F$ is symmetrizable, then
%     $$
%     \ch \g_{\Vir}=\ch\g_{\Heis}.
%     $$
%     In particular, $\g_{\Vir}$ satisfies the BKM denominator identity.
% \end{conjecture}

We would like to prove a statement analogous to Theorem \ref{t:tilde_g_facts}. To do it, we develop a deformation formalism that would connect $\g_\Vir$ and $\g_\Heis$ together. Let $\n_\Vir$ and $\n_\Heis$ be the positive parts of these two algebras. Recall that we have presentations
\begin{align*}
    \n_\Vir&=\frac{*_{i\in I}\g_i*_{j\in J}\Vir^j_{>0}}{\lge (\operatorname{ad} e_i)^{1-2a_{ik}/a_{ii}}e_k,(\operatorname{ad} e_i)^{1+2n\gamma_j(h_i)/a_{ii}}L_n^j:i\in I_{>0}\rge},\\
    \n_\Heis&=\frac{*_{i\in I}\g_i*_{j\in J}\Heis^j_{>0}}{\lge (\operatorname{ad} e_i)^{1-2a_{ik}/a_{ii}}e_k,(\operatorname{ad} e_i)^{1+2n\gamma_j(h_i)/a_{ii}}h_n^j:i\in I_{>0}\rge}.
\end{align*}
Now, consider the ultimate algebra
$$
\n_{\Vir,\Heis}:=\frac{\n_\Vir*\n_\Heis}{\lge[L_n^j,h_m^j]=-mh_{n+m}^j:j\in J\rge}.
$$
In fact, this algebra is the positive part of the ultimate algebra $\g_{\Vir,\Heis}$, which is constructed from $\tilde\g_{\Vir,\Heis}$ in the same way as in Section \ref{s:ideals_in_tilde_g}, while $\tilde\g_{\Vir,\Heis}$ is given by the analogous construction and the arguments from Section \ref{s:tilde_g}, where $\Vir^j$ is replaced with $\Vir^j\ltimes\Heis^j$. Since the proofs are similar to those of Lemma \ref{l:tilde_g_Virasoro_representation} and just contain more cases to check, we refer to the Appendix for them.

\subsection{A family of Virasoro-Heisenberg algebras over $\PP^1$}\label{s:families_of_Vir-Heis_algebras}
So, we turn to studying the algebra $\mathfrak{v}:=\Vir\ltimes\Heis$ for a while. It has basis $L_n$ for $n\in\Z$, $x_n$ for $n\in\Z\setminus 0$, and $c$, and the relations between $L_n$ and $h_m$ are
$$
[L_n,x_m]=-mx_{n+m}.
$$

For $t\in\C$ and $n\in\Z$, we define
$$
Y_n(s,t):=sL_n+tx_n\in\vv.
$$

\begin{lemma}\label{l:commutator_of_Heis-Vir_families}
    For $n,m\in\Z$, we have
    \begin{align*}
        [Y_n(s,t),Y_m(s,t)]&=(n-m)sY_{m+n}(s,t)+\delta_{m,-n}\left(s^2(n^3-n)+t^2n\right)c.
    \end{align*}
    In particular, $Y_n(1,t)+\delta_{n,0}\frac{t^2}2c$ and $c$ is the standard basis of the Virasoro algebra and $Y_n(0,1)$, $L_0$ is the standard basis of $\Heis\rtimes\C L_0$.
\end{lemma}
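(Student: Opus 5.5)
The plan is a direct bilinear expansion of the bracket in $\vv=\Vir\ltimes\Heis$, using only the three families of structure constants. First I fix conventions. In $\vv$ the Virasoro bracket is the one used throughout Section \ref{s:VBKM_algebras}, namely $[L_n,L_m]=(n-m)L_{n+m}+\delta_{n,-m}(n^3-n)c$. The Heisenberg bracket is $[x_n,x_m]=n\delta_{n,-m}c$, with the same central element $c$. The mixed bracket is $[L_n,x_m]=-mx_{m+n}$. Since $\Heis$ has no $x_0$, I will adopt the convention $x_0:=0$, so that $Y_0(s,t)=sL_0$. This is consistent with the mixed bracket: $[L_n,x_{-n}]=n x_0$ must vanish because there is no degree-zero non-central element of $\Heis$ to land in.

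Next I expand by bilinearity:
\begin{align*}
[Y_n(s,t),Y_m(s,t)]&=s^2[L_n,L_m]+st\bigl([L_n,x_m]-[L_m,x_n]\bigr)+t^2[x_n,x_m].
\end{align*}
The key observation is that the two cross terms combine to $st(-m+n)x_{n+m}$. This has exactly the coefficient $(n-m)$ that multiplies $s^2L_{n+m}$ in the Virasoro term. Together they therefore give $(n-m)s\bigl(sL_{n+m}+tx_{n+m}\bigr)=(n-m)sY_{n+m}(s,t)$. The remaining central contributions are $s^2\delta_{n,-m}(n^3-n)c$ and $t^2\delta_{n,-m}nc$, which yields the displayed formula. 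The one point needing care is the case $n+m=0$, where the term $(n-m)st\,x_0$ appears. Here the convention $x_0=0$ makes the identity $Y_0=sL_0$ consistent, so nothing extra arises.

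For the ``in particular'' statements, first set $s=1$ and put $\tilde Y_n:=Y_n(1,t)+\delta_{n,0}\frac{t^2}{2}c$. The only change to the bracket occurs when $n+m=0$. In that case $(n-m)Y_0=(n-m)\tilde Y_0-(2n)\frac{t^2}{2}c$, and the correction $-nt^2c$ exactly cancels the $t^2n\,c$ term. Hence $[\tilde Y_n,\tilde Y_m]=(n-m)\tilde Y_{n+m}+\delta_{n,-m}(n^3-n)c$, which is the Virasoro relation. The elements $\tilde Y_n$ and $c$ are linearly independent because $\tilde Y_n$ has leading term $L_n$, so they form a basis of a copy of $\Vir$ inside $\vv$. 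Second, set $s=0,t=1$. Then $Y_n(0,1)=x_n$ with $[x_n,x_m]=n\delta_{n,-m}c$. Adjoining $L_0$, which acts by $[L_0,x_m]=-mx_m$, gives $\Heis\rtimes\C L_0$. I expect no real obstacle in this proof. The only step requiring attention is the bookkeeping of the $x_0$ convention and of the central shift $\frac{t^2}{2}c$ in the degree-zero case.
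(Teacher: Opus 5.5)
Your proposal is correct and is the same direct bilinear expansion the paper dismisses as ``a standard computation.'' The cross terms $st\bigl([L_n,x_m]-[L_m,x_n]\bigr)=(n-m)st\,x_{n+m}$ and the $\frac{t^2}{2}c$ shift in degree zero are handled correctly, and your convention $x_0=0$ (so $Y_0(s,t)=sL_0$) is the one the paper uses implicitly: its degree-zero piece is $\mc V_0=\mc C\oplus\mc L$, and its bracket $\mc V_n\otimes\mc V_{-n}\to\mc C\oplus\mc L$ has $L_0$-coefficient $2ns^2$.
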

\begin{proof}
    This is a standard computation.
\end{proof}

% We turn to studying the algebra $\Vir\ltimes\Heis$ for a while. It has basis $L_n$ for $n\in\Z$, $x_n$ for $n\in\Z\setminus 0$, and $c$, and the relations between $L_n$ and $x_m$ are
% $$
% [L_n,x_m]=-mx_{n+m}.
% $$

% For $t\in\C$ and $n\in\Z$, we define
% $$
% Y_n(s,t):=sL_n+tx_n.
% % \begin{cases}
% %     sL_n^j+tx_n^j,&n\ne 0,\\
% %     s^2L_0^j+\frac{t^2}2c,&n\ne 0.
% % \end{cases}
% $$

Now, we turn this into a family of Lie algebras over $\PP^1$. Let $\mc V_n$ be the subbundle of $\mathfrak{v}\otimes \mc O_{\PP^1}$ such that over $[s:t]$, it is the line in $\g$ spanned by $Y_n(s,t)$ if $n\ne 0$ and entire $\g_0$ over $0$. We set $\mc V:=\bigoplus_{n\in\Z}\mc V_n$ and write $\mc V_0=\mc C\oplus\mc L$ where $\mc C$ is spanned by $c$ and $\mc L$ is spanned by $L_0$.

By Lemma \ref{l:commutator_of_Heis-Vir_families}, each fiber of $\mc V$ is closed under the Lie bracket coming from $\mathfrak{v}_0$. Moreover, the form of elements $Y_n(s,t)$ gives that $\mc G_n$ is isomorphic to the tautological bundle $\mc O_{\PP^1}(-1)$, while $\mc G_0$ is isomorphic to $\mc O_{\PP^1}^{\oplus 2}$. So, it is a Lie algebra family with the Lie bracket given by the following rules:
\begin{align*}
    \mc V_n\otimes \mc V_m=\mc O(-1)\otimes \mc O(-1)=\mc O(-2)&\xrightarrow{(n-m)s}\mc O(-1)=\mc V_{n+m},\qquad n+m\ne 0,nm\ne 0,\\
    \mc V_n\otimes \mc V_{-n}=\mc O(-1)\otimes \mc O(-1)=\mc O(-2)&\xrightarrow{((n^3-n)s^2+nt^2,\,2ns^2)}\mc O\oplus\mc O=\mc C\oplus\mc L=\mc V_{0},\\
    \mc L\otimes \mc V_m=\mc O\otimes \mc O(-1)=O(-1)&\xrightarrow{-m}\mc O(-1)=\mc V_m,\\
    \mc C\otimes \mc V_m&\to 0.
\end{align*}

By Lemma \ref{l:commutator_of_Heis-Vir_families}, the fiber $\mc V_{x}$ is isomorphic to $\Vir$ if $x\ne 0$ and to $\Heis$ if $x=0$.

We can construct this family from the fact that the Virasoro and Heisenberg algebras are central extensions of the Witt and polynomial algebras, respectively. Let $\mathfrak{w}$ be the Witt algebra and
$$
\mc W':=\mathfrak{w}\otimes \mc O=\bigoplus_{n\in\Z} \mc W_n,\qquad \mc W_n'=\mc O
$$ 
be the trivial family of Witt algebras over $\PP^1$. It has a central extension by $\mc O$ making it the trivial family of Virasoro algebras over $\PP^1$.

Now, twist this family by $\mc O(-1)$ and the Lie bracket by the embedding $\mc O(-1)\xrightarrow{s}\mc O$. In other words, we get the Lie algebra family $\mc W'(-1)$ with the Lie bracket given by
\begin{align*}
    \mc W_n'(-1)\otimes \mc W_m'(-1)=\mc O(-1)\otimes \mc O(-1)=\mc O(-2)&\xrightarrow{(n-m)s}\mc O(-1)=\mc W_{n+m}(-1).
\end{align*}
It is a non-trivial family over $\PP^1$ with fiber at $x\in\PP^1$ isomorphic to the Witt algebra when $x\ne 0$ and to the commutative polynomial Lie algebra $\C[t,t^{-1}]$ when $x=0$. This family admits a central extension $\mc W$ which equals $\mc G$ with $\mc L(-1)$ instead of $\mc L$. In other words, this central extension has $\mc O\oplus \mc O(-1)$ as the zero graded piece with
\begin{align*}
    \mc W_n\otimes \mc W_m=\mc O(-1)\otimes \mc O(-1)=\mc O(-2)&\xrightarrow{(n-m)s}\mc O(-1)=\mc W_{n+m},\qquad n+m\ne 0,nm\ne 0,\\
    \mc W_n\otimes \mc W_{-n}=\mc O(-1)\otimes \mc O(-1)=\mc O(-2)&\xrightarrow{((n^3-n)s^2+nt^2,\,2ns)}\mc O\oplus\mc O(-1)=\mc C\oplus\mc L(-1),\\
    \mc L(-1)\otimes \mc W_m=\mc O(-1)\otimes \mc O(-1)=O(-2)&\xrightarrow{-ns}\mc O(-1)=\mc W_m,\\
    \mc C\otimes \mc W_m&\to 0.
\end{align*}

\begin{remark}
    With these families defined, we can construct families of Verma modules over them. In particular, we get a family of Fock spaces turning into a family of vertex algebras -- the Virasoro vertex algebras over $\PP^1\setminus 0$ and the Heisenberg vertex algebra over $0$.

    Note that the family $\mc V$ over $\PP^1$ is the known one-parameter family of conformal structures on the Heisenberg vertex algebra. See \cite[Example 2.5.9]{FBZ04}
\end{remark}

\begin{remark}
    The algebras $\mc V$ and $\mc W$ differ by a twist of $\mc L$. This introduces a small difference in the study of Verma modules over these algebras, as the highest weight is taken in $\Gamma(X,\mc C\oplus\mc)$ in the case of $\mc V$ and in $\Gamma(X,\mc C\oplus\mc L(1))$ in the case of $\mc W$. In terms of parametrized families over $\PP^1$ the difference is that in the first case, the highest weight remains constant along $\PP^1$, while in the second case it varies as a linear function on $\PP^1$. In this paper, we will use $\mc V$ and will not be concerned with varying families of weights.
\end{remark}

\subsection{A family of VBKM algebras over $\mathbb P^1$}
We define a family of VBKM algebras over $\mathbb P^1$ interpolating between $\tilde\g_{\Vir}$ and $\tilde\g_{\Heis}$ inside the ultimate algebra $\tilde\g_{\Vir,\Heis}$ introduced in Section \ref{s:ultimate_algebra}. First of all, take the trivial family $\tilde\g_{\Vir,\Heis}\times \mathbb P^1$. We define a subfamily $\tilde {\mc G}\subset \tilde\g_{\Vir,\Heis}\times \mathbb P^1$ in the following way: the fiber $\tilde {\mc G}_x$ over a point $x=[s:t]$ is the Lie subalgebra of $\tilde \g_{\Vir,\Heis}$ generated by $e_i,f_i,\h,sL_n^j+tx_n^j$ over all $i,j,n$. Its Cartan datum is, in usual notation, 

$$
\mc F_{x}=
\begin{cases}
(\h,\h^\vee,I,J,\alpha_i,h_i,\gamma_j,c_j,b_j+\tfrac12(t/s)^2c_j),&s\ne 0\\
(\h,\h^\vee,I\sqcup \bigsqcup_{n\in\Z_{> 0}}nJ,\alpha_i,n\gamma_j,h_i,nc_j:n\in\Z_{>0}),&s=0.
\end{cases}
$$

We have decompositions
$$
\tilde{\mc G}=\tilde{\mc N}_-\oplus\tilde{\mc H}\oplus\tilde{\mc N}_+=\bigoplus_{\alpha\in\h^\vee}\tilde{\mc G}_\alpha,
$$
where $\tilde{\mc G}_\alpha$ are vector bundles of finite rank if $\alpha\ne 0$ and 
$$
\tilde{\mc H}=\h\times \C \mathbb P^1\subset \tilde\g_{\Vir,\Heis}\times\C \mathbb P^1=\tilde{\mc G}_{\Vir,\Heis}.
$$

Now, we take global Serre relations $(\operatorname{ad} e_i)^k(sL_n^j+tx_n^j)$ and the BKM Serre relations and denote the ideal family generated by them by $\tilde{\mc I}$. Let also $\tilde{\mc R}$ be the maximal ideal subfamily of $\tilde{\mc G}$ not intersecting $\tilde{\mc H}$. We say that a certain statement about bundles over $\PP^1$ is {\bfseries true for countably generic $x\in \PP^1$} if it is true for any $x\in \PP^1$ outside a certain at most countable subset of $\PP^1$.
\begin{theorem}\,\label{t:characters_of_g_in_families}
    Assume that $\mc F$ is symmetrizable and choose $\alpha\in\h^\vee$. We have
    \begin{align*}
         \tilde{\mc I}_{x,\alpha}&=\tilde{\mc I}_{0,\alpha},&\text{for any } x\in\PP^1,\\
         \tilde{\mc I}_{x,\alpha}&=\tilde{\mc I}_{0,\alpha}=\tilde{\mc R}_{0,\alpha}=\tilde{\mc R}_{x,\alpha},\quad &\text{for generic } x\in\PP^1.
    \end{align*}
        In particular, the VBKM algebra $\g_{\Vir,x}$ associated to $\mc F_x$ satisfies
        $$
        \dim\g_{\Vir,x,\alpha}=\dim\g_{\Heis,\alpha},\qquad \alpha\in\h^\vee,x\in\mathbb P^1,
        $$
         and has no nontrivial ideals not intersecting $\h$ for countably generic $x$. 
         
         %Therefore, it satisfies the usual denominator identity for BKM algebras.
\end{theorem}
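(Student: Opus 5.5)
We treat $\tilde{\mc G}$ as a family of graded vector bundles and argue root space by root space, fixing $\alpha\in\h^\vee$. By the finiteness condition (\ref{eq:finiteness_condition}), $\tilde{\mc G}_\alpha$ has finite rank. Every bracket of generators in the fiber over $x=[s:t]$ is polynomial in $(s,t)$, so $\tilde{\mc I}_{x,\alpha}$ is the image of a map of vector bundles. Its rank is therefore lower semicontinuous, and the locus where it drops is Zariski closed.

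\textbf{Step 1: constancy of $\tilde{\mc I}$.} The first equality is read inside the free algebra, i.e. as equality of dimensions. We use Theorem \ref{t:tilde_g_Virasoro_facts}(ii) and its appendix analogue: $\tilde{\mc N}_{\pm,x}$ is freely generated by the $e_i$ and the positive part of the $j$-th piece. For $s\ne0$ this piece is $\Vir^j_{>0}$; for $s=0$ it is $\Heis^j_{>0}$. Both have the same character, so $\dim\tilde{\mc G}_{x,\alpha}$ is independent of $x$.

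Next we compare generating relations. The relations in case (i) do not involve $j$ at all. In case (ii), the Serre relations $(\operatorname{ad}e_i)^{1+2n\gamma_j(h_i)/a_{ii}}Y_n^j$ have exponents independent of $x$. The commutation relations in cases (iii)--(iv) are likewise independent of $x$, because the conditions imposed on $b_j$ in Definition \ref{def:enh_gen_Cartan_datum} make $\gamma_k(b_j)=0$ whenever $\gamma_k(c_j)=0$.

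We then filter $\tilde{\mc N}_{+,x}$ by the $\Z$-degree in the $Y^j$'s. Here we use that $[Y_n,Y_m]$ is $s$ times a lower-order term plus a central term (Lemma \ref{l:commutator_of_Heis-Vir_families}), and that $\mathrm{gr}$ of the free product is the $s=0$ free product. We show that the ideal generated by the relations has associated graded equal to the BKM ideal $\tilde{\mc I}_0$. The inclusion $\mathrm{gr}\tilde{\mc I}_x\supseteq\tilde{\mc I}_0$ follows because leading terms of the generators are the BKM generators. For the reverse inequality we use semicontinuity: $\dim\tilde{\mc I}_{x,\alpha}$ can only jump up at special points. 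We also show it cannot exceed $\dim\tilde{\mc I}_{0,\alpha}$, because $\tilde{\mc I}_{x}\subseteq\tilde{\mc R}_x$ (Proposition \ref{p:ideal_in_g_tilde}) and $\dim\tilde{\mc R}_{x,\alpha}\le$ the codimension of the kernel of a Shapovalov-type pairing. This is where symmetrizability enters.

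\textbf{Step 2: $\tilde{\mc R}$ at generic points.} Symmetrizability gives an invariant form on the $s=0$ fiber $\g_\Heis$. By Theorem \ref{t:tilde_g_facts}(iii), $\tilde{\mc R}_{0}$ equals the kernel of that form, and it equals $\tilde{\mc I}_0$. We characterize $\tilde{\mc R}_{x,\alpha}$ as the set of $y\in\tilde{\mc N}_{-,x,\alpha}$ such that $[\tilde{\mc N}_+,\dots,[\tilde{\mc N}_+,y]]$ never reaches $\h$. This is the kernel of a finite matrix, built by iterated brackets into $\h$, whose entries are polynomial in $(s,t)$. Its rank is lower semicontinuous, so $\dim\tilde{\mc R}_{x,\alpha}\le\dim\tilde{\mc R}_{0,\alpha}$ off a closed subset. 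Combining this with $\tilde{\mc I}_x\subseteq\tilde{\mc R}_x$ and Step 1 gives $\tilde{\mc I}_{x,\alpha}=\tilde{\mc R}_{x,\alpha}$ for $x$ outside a proper Zariski-closed, hence finite, set depending on $\alpha$.

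Taking the union of these finite sets over the countably many $\alpha$ yields the countably generic statement. The dimension equality $\dim\g_{\Vir,x,\alpha}=\dim\g_{\Heis,\alpha}$ follows from Step 1.

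The main obstacle is Step 1, specifically proving that no extra relations appear at $s\ne0$. This is a flatness statement for the family of quotient algebras. I would first try to realize $\n_{\Vir,x}$ through the Verma-type module $\widetilde M(\lambda)$ of Lemma \ref{l:tilde_g_Virasoro_representation}, taken in families, and prove the $U(\g_i)$-Serre submodules are flat over $\PP^1$. That flatness would follow from the $\sl_2$-highest weights $2n\gamma_j(h_i)/a_{ii}$ being independent of $x$.
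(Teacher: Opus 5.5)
Your argument follows the paper's route. The ingredients are:
\begin{itemize}
\item a lower bound $\dim\tilde{\mc I}_{x,\alpha}\ge\dim\tilde{\mc I}_{0,\alpha}$ (your filtration gives it at every $x$, which is a valid alternative to the paper's semicontinuity);
\item the inclusion $\tilde{\mc I}_x\subseteq\tilde{\mc R}_x$;
\item upper semicontinuity of $\dim\tilde{\mc R}_{x,\alpha}$, viewed as the kernel of a pairing polynomial in $(s,t)$;
\item the equality $\tilde{\mc R}_0=\tilde{\mc I}_0$ from symmetrizable BKM theory.
\end{itemize}
Your pairing $U(\tilde{\mc N}_+)_{-\alpha}\times\tilde{\mc N}_{-,x,\alpha}\to\h$ is the paper's finite intersection of Shapovalov kernels in disguise, since $u\,y\,v_\lambda=(\operatorname{ad}u)(y)\,v_\lambda$ for $u\in U(\tilde\n_+)$. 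That its kernel is exactly $\tilde{\mc R}_{x,\alpha}$ is Proposition \ref{p:kernel_of_Shapovalov_vs_largest_ideal}. You should cite it rather than assert it, because it is not automatic here: $[\Vir^j_{>0},\Vir^j_{<0}]$ can land in $\tilde\n_+$ without passing through degree $0$. Relatedly, your Step 1 phrase about a ``codimension of the kernel'' is garbled. What you actually use is that $\tilde{\mc R}_{x,\alpha}$ \emph{is} that kernel.

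The genuine gap is the passage from generic $x$ to every $x$. Your upper bound on $\dim\tilde{\mc I}_{x,\alpha}$ goes through $\dim\tilde{\mc R}_{x,\alpha}\le\dim\tilde{\mc R}_{0,\alpha}$. Upper semicontinuity gives this only off a closed set, and $\tilde{\mc R}_{x,\alpha}$ really can be larger at special $x\neq0$. For example, with two Virasoro pieces where $\gamma_j(c_k)\neq0$, the deformed $b$'s can satisfy $\gamma_j(b_k)=\gamma_k(b_j)=0$ at one value of $t/s$. Proposition \ref{p:new_ideal_in_special_VBKM} then puts $[L^j_{-1},L^k_{-1}]$ into $\tilde{\mc R}_x$ but not into $\tilde{\mc I}_x$. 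So Step 1's claim that $\dim\tilde{\mc I}_{x,\alpha}$ cannot exceed $\dim\tilde{\mc I}_{0,\alpha}$ is proved only generically. Neither the first displayed equality nor $\dim\g_{\Vir,x,\alpha}=\dim\g_{\Heis,\alpha}$ follows for all $x$.

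Separately, your sentence that $\dim\tilde{\mc I}_{x,\alpha}$ ``can only jump up at special points'' has the direction reversed. As the rank of an image it can only drop there, which gives the same lower bound as your filtration, not the reverse inequality.

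The missing step is the paper's opening observation: $\tilde\n_{\Vir,x}$, together with its Serre ideal, is the same for all $x$ with $s\neq0$. Rescaling $Y^j_n\mapsto Y^j_n/s$ identifies $\tilde{\mc N}_{\pm,x}$ with one fixed free product and sends the generating relations to one fixed set. You already checked why: the Serre exponents and the conditions in cases (iii)--(iv) do not depend on $x$, and the deformed element $b_j+\tfrac12(t/s)^2c_j$ never enters them. Hence $\dim\tilde{\mc I}_{x,\alpha}$ is constant on $\{s\neq0\}$, and the generic equality propagates to every $x$.

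With that one sentence the proof is complete. The ``main obstacle'' in your last paragraph does not exist, and the proposed flatness argument for Serre submodules is unnecessary.
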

We split the proof in two parts, with the second part done a bit later after additional preliminaries.
\begin{proof}[Beginning of the proof]
    First of all, $\tilde \n_{\Vir,x}\simeq \tilde\n_{\Vir,y}$ for all $x,y\in\mathbb P^1\setminus\infty$. Therefore, we have inequalities
    \begin{equation}\label{eq:inequalities_between_characters}
        \dim\g_{\Heis,\alpha}\ge \dim\g_{\Vir,x,\alpha}=\dim\g_{\Vir,\mathrm{generic},\alpha}\ge \dim\g_{\Vir,\mathrm{generic},\alpha}/\r_\alpha.
    \end{equation}
    Then, we use the fact that $\tilde I=\r$ for $\tilde\g_{\Heis,\alpha}$, so we need to compare dimensions of $\tilde{\mc R}_{0,\alpha}$ and $\tilde{\mc R}_{\mathrm{generic},\alpha}$. We will show that $\tilde{\mc R}_{\alpha}$ equals the kernel of a certain global bilinear form (Proposition \ref{p:kernel_of_Shapovalov_vs_largest_ideal}), hence its dimension is upper-semicontinuous. This implies that
    $$
    \tilde{\mc R}_{0,\alpha}\ge \tilde{\mc R}_{\mathrm{generic},\alpha},
    $$
    which proves the claim after we add \ref{eq:inequalities_between_characters} into consideration.
\end{proof}

To prove the missing part about the bilinear form, we need to recall some facts about the Shapovalov form from \cite{Sha72}. It is a bilinear form on Verma modules $\widetilde M(\lambda)$ for $\tilde\g$ and $\lambda\in\h^*$ or $\h^\vee$. First of all, we have a decomposition
$$
U(\tilde\g)=U(\h)\oplus(\tilde\n_-U(\g)+U(\g)\tilde\n_+).
$$
This decomposition defines a projection
$$
\pi:U(\tilde\g)\to U(\h).
$$
We define a bilinear form $A(\cdot,\cdot)$ on $U(\tilde\g)$ with values in $U(\h)$ by
$$
A(x,y):=\pi(\theta(x)y),
$$
where $\theta$ is the anti-automorphism of $U(\tilde\g)$ which equals $-\omega$ on the Lie algebra. For $\lambda\in\h^*$, we set $A_\lambda$ to be the evaluation of $A$ at $\lambda$. We recall the main properties of this bilinear form from \cite{Sha72}.
\begin{theorem}\label{t:Shapovalov_form_properties}
    The following is true, where orthogonality is considered with respect to $A$.
    \begin{enumerate}[label=(\roman*)]
        \item $A$ is symmetric.
        \item $U(\tilde\g)_\alpha\perp U(\tilde\g)_\alpha$ for $\alpha\ne\beta$.
        \item $U(\tilde\g)\n_+\perp U(\tilde\g)$.
        \item $A(ab,c)=A(b,\theta(a)c)$ for any $a,b,c\in U(\tilde\g)$.
        \item $A(ah,c)=hA(a,c)$ for $h\in \h$.
    \end{enumerate}
\end{theorem}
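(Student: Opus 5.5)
The plan is to derive every property from two structural facts about the projection $\pi$ and the anti-automorphism $\theta$. The first is the decomposition $U(\tilde\g)=U(\h)\oplus(\tilde\n_-U(\tilde\g)+U(\tilde\g)\tilde\n_+)$. I would obtain it from the triangular decomposition of Theorem \ref{t:tilde_g_Virasoro_facts}(ii) and PBW: $U(\tilde\g)\simeq U(\tilde\n_-)\otimes U(\h)\otimes U(\tilde\n_+)$. In this factorization the second summand is exactly the span of monomials having a nonconstant factor in $U(\tilde\n_-)$ or in $U(\tilde\n_+)$.

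The second fact is the compatibility $\pi\circ\theta=\pi$. Since $\omega$ swaps $\tilde\n_+$ and $\tilde\n_-$ and acts by $-1$ on $\h$, the map $\theta=-\omega$ (extended as an anti-automorphism) has three relevant properties. It fixes $\h$ pointwise, hence fixes $U(\h)$ pointwise, as $U(\h)$ is commutative. It sends $\tilde\n_-U$ to $U\tilde\n_+$ and $U\tilde\n_+$ to $\tilde\n_-U$, so it preserves the complement. It is an involution. Together these give $\pi(\theta(u))=\pi(u)$ for all $u$.

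Then the items follow:

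\begin{enumerate}[label=(\roman*)]
\item Symmetry: $A(y,x)=\pi(\theta(y)x)=\pi(\theta(\theta(x)y))=\pi(\theta(x)y)=A(x,y)$.
\item Weight orthogonality, for $U(\tilde\g)_\alpha$ and $U(\tilde\g)_\beta$ with $\alpha\ne\beta$: the decomposition is $\h$-graded under the adjoint action, and $U(\h)$ sits in weight $0$. Hence $\pi$ vanishes on nonzero weight spaces. Also $\theta$ maps weight $\alpha$ to weight $-\alpha$. So $\theta(x)y$ has weight $\beta-\alpha\ne0$, and therefore $\pi(\theta(x)y)=0$.
\item By (i) it suffices to compute $A(x,y\tilde\n_+)=\pi(\theta(x)y\tilde\n_+)$. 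This vanishes because $\theta(x)y\tilde\n_+\subset U\tilde\n_+$ lies in the kernel of $\pi$.
\item This is immediate from $\theta(ab)c=\theta(b)\theta(a)c$.
\item Using (i), reduce to showing that $\pi$ is left $U(\h)$-linear, i.e. $\pi(hu)=h\pi(u)$. For this, check that left multiplication by $h$ preserves both summands. It obviously preserves $U(\h)$ and $U\tilde\n_+$. For the other summand, $h\tilde\n_-U\subset\tilde\n_-hU+[h,\tilde\n_-]U\subset\tilde\n_-U$. Then $A(ah,c)=\pi(\theta(h)\theta(a)c)=\pi(h\theta(a)c)=hA(a,c)$, since $\theta(h)=h$.
\end{enumerate}

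The only genuinely nontrivial input is the decomposition of $U(\tilde\g)$, which rests on Theorem \ref{t:tilde_g_Virasoro_facts}(ii). One subtlety needs care there: $\h$ contains the $L_0^j=b_j$ and $c_j$, and I must verify that $\omega$ acts on them by $-1$. For $L_0^j$ this holds because $\omega(L_0^j)=-L_0^j$. For $c_j$ it follows by applying $\omega$ to the bracket $[L_n^j,L_{-n}^j]$. This ensures $\theta$ fixes $U(\h)$ pointwise; everything else is formal.
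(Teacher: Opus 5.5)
Your proposal is correct. The paper gives no argument of its own here and simply defers to \cite{Sha72}. Your reduction of all five items to the PBW splitting $U(\tilde\g)=U(\h)\oplus(\tilde\n_-U(\tilde\g)+U(\tilde\g)\tilde\n_+)$ together with $\pi\circ\theta=\pi$ is exactly the standard Shapovalov argument, with the triangular decomposition and the involution $\omega$ of Theorem \ref{t:tilde_g_Virasoro_facts} as the only inputs specific to $\tilde\g$. You also correctly read item (ii) as $U(\tilde\g)_\alpha\perp U(\tilde\g)_\beta$. In item (v) the appeal to (i) is unnecessary, since your direct computation $A(ah,c)=\pi(h\,\theta(a)c)=hA(a,c)$ already proves it.
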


Because of properties $(iii)-(iv)$, the form $A$ descends to a $\theta$-equivariant form on the {\bfseries universal Verma module} $\widetilde M(\h^*)$ defined as
$$
M(\h^*):=\mathrm{Ind}_{U(\tilde\b_+)}^{U(\tilde\g)}U(\h)\simeq \mathrm{Ind}_{U(\tilde\n_+)}^{U(\tilde\g)}\C\simeq U(\tilde\g)/U(\tilde\g)\n_+.
$$
Because of property $(v)$, $A_\lambda$ descends to a $\theta$-equivariant form on usual Verma modules
$$
\widetilde M(\lambda)=\widetilde M(\h^*)\otimes_{U(\h)}\C_\lambda\simeq \mathrm{Ind}_{U(\tilde\b_+)}^{U(\tilde\g)}\C.
$$

% To define it, fix an isomorphism $\widetilde M(\lambda)\simeq U(\tilde\n_-)$ and write $\pi_\lambda$ for the projection 
% $$
% \pi_\lambda: U(\tilde\g)\simeq U(\tilde\n_-)\otimes U(\h)\otimes U(\tilde\n_+)\to U(\h)\to\C,
% $$
% where the last map is induced by $\lambda:\h\to\C$. Using the isomorphism $\widetilde M(\lambda)\simeq U(\tilde\n_-)$, we define a bilinear form on $\widetilde M(\lambda)$ by
% $$
% (a,b)_{\lambda}:=\pi_\lambda(\theta(a)b),\qquad a,b\in U(\n_-),
% $$ 
% where $\theta$ is the anti-automorphism of $\tilde\g$ which equals $-\omega$ on the Lie algebra.
% \begin{proposition}
%     The just defined bilinear form on $\widetilde M(\lambda)$ is $\tilde\g(\mc F)$-contravariant with respect to $\theta$:
%     $$
%     (xa,b)=(a,\theta(x)b).
%     $$
% \end{proposition}
% \begin{proof}
%     First of all, note that 
%     $$
%     (\widetilde M(\lambda)_\alpha,\widetilde M(\lambda)_\beta)=0
%     $$
%     unless $\alpha=\beta$. Therefore, the contravariance is enough to check for $x\in \tilde\g_\tau$, $a\in \widetilde M(\lambda)_\alpha$, and $b\in \widetilde M(\lambda)_\beta$ such that $\tau+\alpha=\beta$.

%     Let us first assume that $\tau<0$. Then $\beta-\tau=\alpha< 0$.
    
%     We know that elements of $\tilde\n_-$ act by left multiplications and elements of $\n_+$ act by algebra derivations. Then, for $x\in \tilde\n_-$ we have
%     $$
%     (xa,b)=\pi_\lambda(\theta(xa)b)=\pi_\lambda(\theta(a)\theta(x)b)=(a,\theta(x)b).
%     $$

%     Now, assume that $\tau>0$. Then 
% \end{proof}

\begin{proposition}\label{p:kernel_of_Shapovalov_vs_largest_ideal}
    Let $S_\lambda$ be the kernel of $A_\lambda$ on $\widetilde M(\lambda)\simeq U(\tilde\n_-)$. Then $S_\lambda$ is the largest submodule of $M(\lambda)$ not intersecting the highest weight component and 
    $$
    \bigcap_{\lambda\in\h^*}S_\lambda\cap \tilde\n_-=\r_-.
    $$
    Moreover, for any negative root $\alpha$, we can choose finitely many $\lambda_1,\ldots,\lambda_n\in\h^*$ such that
    $$
    \bigcap_{i=1}^nV_{\lambda_i}\cap \tilde\n_\alpha=\r_\alpha.
    $$
\end{proposition}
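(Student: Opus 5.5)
The proof follows the standard Kac/Shapovalov argument, adapted to $\tilde\g$. The plan has four steps: show $S_\lambda$ is a submodule, show it is the largest one missing the top weight space, compare with $\r_-$ through the map $x\mapsto x.1$, and finally reduce the infinite intersection to a finite one by a dimension count.

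First, $S_\lambda$ is a submodule of $\widetilde M(\lambda)$. If $v\in S_\lambda$ and $a\in U(\tilde\g)$, then by Theorem \ref{t:Shapovalov_form_properties}(iv), descended to $\widetilde M(\lambda)$, we have $A_\lambda(av,w)=A_\lambda(v,\theta(a)w)=0$ for all $w$. The highest weight line $\C\cdot 1$ is not in $S_\lambda$, since $A_\lambda(1,1)=1$. Conversely, let $N$ be a submodule with $N\cap \C 1=0$. By $\h$-semisimplicity $N$ is weight-graded, so $N_\lambda=0$. For $v\in N_{\lambda-\beta}$ and $w=y.1$ with $y\in U(\tilde\n_-)_{-\beta}$, we get $A_\lambda(w,v)=A_\lambda(1,\theta(y)v)$. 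Here $\theta(y)v\in N_\lambda=0$, and other weights pair trivially by (ii). Hence $N\subset S_\lambda$, so $S_\lambda$ is the largest such submodule.

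Next, compare with $\r_-=\r\cap\tilde\n_-$ via the bijection $U(\tilde\n_-)\to\widetilde M(\lambda)$, $x\mapsto x.1$, from the proof of Theorem \ref{t:tilde_g_Virasoro_facts}. For $\r_-\subset S_\lambda$: the ideal $\r$ satisfies $\r\cap\tilde\n_-\subset \tilde\n_-$ and $[\tilde\n_+,\r_-]\subset\r_-$. Therefore $U(\tilde\g)\r_-.1 = U(\tilde\n_-)\r_-.1$ is a submodule living in weights strictly below $\lambda$, and so it lies in $S_\lambda$ for every $\lambda$.

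For the reverse inclusion, let $x\in\tilde\n_-$ with $x.1\in S_\lambda$ for all $\lambda$. I show that the ideal generated by $x$ misses $\h$. It suffices to show that $\operatorname{ad}(U(\tilde\n_+))x$ never meets $\h\setminus 0$. Suppose some $u\in U(\tilde\n_+)$ gives $[u,x]=h\in\h$ (using iterated brackets). Then
\[
A_\lambda(1,h.1)=\lambda(h).
\]
On the other hand, $h.1=u x.1$ modulo $\tilde\n_-$-terms applied to $1$ of the wrong weight, so $h.1$ lies in $S_\lambda$, since $S_\lambda$ is a submodule. This gives $\lambda(h)=0$ for all $\lambda$, hence $h=0$. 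Hence the ideal generated by $\bigcap_\lambda S_\lambda\cap\tilde\n_-$ misses $\h$, and by maximality of $\r$ (Theorem \ref{t:tilde_g_Virasoro_facts}(iv)) it lies in $\r_-$. Here we also use that $\omega$-symmetrizing with the $\tilde\n_+$ part is harmless, since $\r$ splits.

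The bookkeeping in this step is the delicate point. $\operatorname{ad}u(x).1$ differs from $u.(x.1)$ by terms of the form $x'u'.1$, and $u'.1=0$ when $u'$ has positive degree, so the two agree. I expect this to go through, but it is where care is needed.

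Finally, for the finite-intersection statement, fix a negative root $\alpha$. The space $\tilde\n_\alpha$ is finite-dimensional by the finiteness condition (\ref{eq:finiteness_condition}) (the $V_{\lambda}$ in the statement should be read as $S_\lambda$). The subspaces $S_\lambda\cap\tilde\n_\alpha$ form a family of subspaces of a finite-dimensional space. Any intersection of such a family equals the intersection of finitely many members, because each proper shrinking drops the dimension, so a descending chain of finite intersections stabilizes after at most $\dim\tilde\n_\alpha$ steps. Choosing $\lambda_1,\ldots,\lambda_n$ that realize the stable value gives $\bigcap_{i=1}^n S_{\lambda_i}\cap\tilde\n_\alpha=\r_\alpha$.

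The main obstacle is the reverse inclusion $\bigcap_\lambda S_\lambda\cap\tilde\n_-\subset\r_-$. It needs the identification of bracket action with module action on $1$, and it needs the argument that a polynomial function $\lambda\mapsto\lambda(h)$ vanishing on all of $\h^*$ forces $h=0$.
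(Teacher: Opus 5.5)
The first claim (maximality of $S_\lambda$ via contravariance) is handled correctly, and somewhat more cleanly than in the paper. So are the inclusion $\r_-\subset S_\lambda$ via the submodule $U(\tilde\n_-)\r_-.1$ and the finite-intersection statement via stabilization inside the finite-dimensional $\tilde\n_\alpha$. For the inclusion you do not need $[\tilde\n_+,\r_-]\subset\r_-$; it is enough that $\r\cap\h=0$ and $\r_+.1=0$. The two points you flag as delicate are also fine: $\operatorname{ad}(u)(x).1=u.(x.1)$ for $u\in U(\tilde\n_+)$, and $\lambda(h)=0$ for all $\lambda$ forces $h=0$.

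The gap is your reduction \emph{it suffices to show that $\operatorname{ad}(U(\tilde\n_+))x$ never meets $\h\setminus 0$}. The ideal generated by $x$ is $\operatorname{ad}(U(\tilde\n_-))\operatorname{ad}(U(\h))\operatorname{ad}(U(\tilde\n_+))x$. If some nonzero element of $\operatorname{ad}(U(\tilde\n_+))x$ lies in $\tilde\n_+$, a further bracket with $\tilde\n_-$ can take it into $\h$. Your computation says nothing about such elements, since $y.1=0$ for $y\in\tilde\n_+$.

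What you need is $\operatorname{ad}(U(\tilde\n_+))x\subset\tilde\n_-$, which is exactly the step the paper addresses with its \emph{one must pass through $\h$} argument. In the VBKM setting this is not automatic. Take the principal grading $\deg e_i=1$, $\deg f_i=-1$, $\deg L^j_n=n$, $\deg\h=0$; the relations are homogeneous and the degree-zero part is $\h$. The generator $L^j_2$ raises degree by two, e.g. $[L^j_2,L^j_{-1}]=3L^j_1$, so a chain of brackets can jump from $\tilde\n_-$ into $\tilde\n_+$ without touching $\h$.

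To close the gap:
\begin{enumerate}
\item Let $X=\{x\in\tilde\n_-: x.1\in S_\lambda\text{ for all }\lambda\}$. It is graded, because $A$ pairs distinct principal degrees to zero.
\item Use the generators $e_i,L^j_1,L^j_2$ of $\tilde\n_+$. Your argument shows that for such a generator $g$ and homogeneous $x\in X$, $[g,x]\in X$ whenever $[g,x]\in\tilde\n_-$, and $[g,x]=0$ whenever $[g,x]\in\h$.
\item The only remaining case is $g=L^j_2$ with $\deg x=-1$. Then $x=\sum_i a_if_i+\sum_k c_kL^k_{-1}$, and $[L^j_1,x]=2c_jb_j\in\h$ must vanish. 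Since $b_j\neq 0$, this gives $c_j=0$, so $[L^j_2,x]=3c_jL^j_1=0$.
\item Hence $[\tilde\n_+,X]\subset X\subset\tilde\n_-$. The ideal generated by $x$ therefore lies in $\tilde\n_-$, misses $\h$, and is contained in $\r_-$.
\end{enumerate}
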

\begin{proof}
    Clearly, $S_\lambda$ is a submodule of $\widetilde M(\lambda)$ not intersecting the highest weight component. To prove that it is the largest such module, let $v\in \widetilde M(\lambda)_\mu$ be such that $v\mod S_\lambda$ is annihilated by $\tilde\n_+$. Then for any
    $$
    0=A_\lambda(U(\n_+)\n_+v,v_\lambda)=A_\lambda(v,U(\n_-)\n_-v_\lambda)=A_\lambda(v,\widetilde M(\lambda)_{<\lambda}),
    $$
    where $v_\lambda$ is the highest weight vector of $\widetilde M(\lambda)$. Then $v\in S_\lambda$ or $v\in \widetilde M(\lambda)_\lambda$, which implies that $\widetilde M(\lambda)/S_\lambda$ does not contain any highest vector except $v_\lambda$. This shows that it is irreducible, so we are done with the first part.

    Take $x\in\r_-$. Then $\theta(x)\in\r_+$. Since $\r$ does not intersect $\h$, $[\theta(x),U(\tilde\g)]$ has no $U(\h)$-component in the PBW decomposition. This implies that for any $a\in U(\tilde\n_-)$,
    $$
    A_\lambda(x,a)=\pi_\lambda(\theta(x)a)=\pi_\lambda([\theta(x),a])=0.
    $$
    This shows that $x\in V\cap \tilde\n_-$, showing the first part of the statement.
    
    Now, take $x\in \bigcap_{\lambda\in\h^*}S_\lambda\cap \tilde\g_\alpha$ for some negative root $\alpha$. We have 
    $$
    \pi_\lambda(\theta(a)x)=0\qquad \text{for all }a\in U(\tilde\n_-).
    $$
    Equivalently,
    $$
    \pi_\lambda(ax)=0\qquad \text{for all }a\in U(\tilde\n_+)_\alpha.
    $$
    Therefore, the $U(\h)$-components of elements in $U(\tilde\n_+)_\alpha x$ lie in $U(\ker\lambda)$. Since
    $$
    \bigcap_{\lambda\in\h^*}U(\ker\lambda)=0,
    $$
    we get that the $U(\h)$-components of elements in $U(\tilde\n_+)x$ are zero.

    Suppose that there exists $y\in\tilde\n_+$ such that $0\ne[y,x]\in\h\oplus\tilde\n_+$. Since $\tilde \n_+$ is generated by simple root spaces and the only way to reach positive roots from negative roots by applying simple roots and not leaving the roots cone is by passing through $\h$, there exists $z\in\tilde\g_{-\alpha}$ such that $0\ne[x,z]\in\h$. But then
    $$
    \lambda([x,z])=\pi_\lambda([x,z])=-\pi_\lambda(zx)=-A_\lambda(\theta(z),x)=0
    $$
    for any $\lambda$. This is a contradiction, which shows that there is no $y$ as we assumed. Therefore, the ideal of $\tilde\g$ generated by $x$ is contained in $\tilde\n_-$, and therefore is contained in $\r_-$. This finishes the proof.

    To get the last statement, notice that by finite-dimensionality of $\tilde \g_\alpha$, the space $[\g_\alpha,\g_{-\alpha}]\subset \h$ is finite-dimensional, and the above proof works if we just intersect over some finite set $\alpha_1,\ldots,\alpha_n$ such that 
    $$
    [\tilde\g_\alpha,\tilde\g_{-\alpha}]\cap\bigcap_{i=1}^n\ker\alpha_i=0.
    $$
\end{proof}

\begin{proof}[End of the proof of Theorem \ref{t:characters_of_g_in_families}]
    Choose $\lambda_1,\ldots,\lambda_n\in\h^*$ such that
    $$
    [\tilde{\mc G}_{x,\alpha},\tilde{\mc G}_{x,\alpha}]\cap\bigcap_{i=1}^n\ker\alpha_i=0
    $$
    for any $x\in \PP^1$. We can do it since after we have chosen $\lambda_i$ such that this condition is satisfied for one $x$, it is satisfied for an open neighborhood of $x$ since the condition is generic. Then there are at most finitely many points remaining when the condition is not satisfied, and we just add more $\lambda_i$ to our collection to satisfy the condition at those remaining points.
    
    By Proposition \ref{p:kernel_of_Shapovalov_vs_largest_ideal}, 
    $$
    \mc R_{x,\alpha}=\bigcap_{i=1}^n \ker(\cdot,\cdot)_{\lambda_i}\cap \tilde\n_{x,\alpha}.
    $$
    Since the intersection is finite, the dimension of $\mc R_{x,\alpha}$ is upper semi-continuous. This finishes the missing part of the proof of Theorem \ref{t:characters_of_g_in_families}, so we are done.
\end{proof}

Recall the last condition in Definition \ref{def:enh_gen_Cartan_datum}: 
\begin{equation}\label{eq:special_Virasoro_condition}
    \text{$\alpha(c_j)=0$ implies $\alpha(b_j)=0$ for any $\alpha\in\{\alpha_i,\gamma_k:i\in I,k\in J\setminus j\}$. }
\end{equation}

Since $b_j$ deforms to $b_j+\frac{t^2}{2}c_j$ in the $\PP^1$-family of VBKM algebras that we are studying, a countably generic VBKM algebra in this family satisfies a stronger condition: 
\begin{equation}\label{eq:generic_Virasoro_condition}
    \text{$\alpha(c_j)=0$ if and only if $\alpha(b_j)=0$ for any $\alpha\in\{\alpha_i,\gamma_k:i\in I,k\in J\setminus j\}$. }
\end{equation}

Because of Computation \ref{comp:why_not_submodules_of_Vermas} and Proposition \ref{p:new_ideal_in_special_VBKM}, we can state the following conjecture.
\begin{conjecture}\label{conj:char_of_g}
    The statement of Theorem \ref{t:characters_of_g_in_families} holds for any $x\in\PP^1$ for which the corresponding VBKM algebra satisfies (\ref{eq:generic_Virasoro_condition}). In other words, the VBKM algebra $\tilde\g(\mc F_x)$ contains no ideals not intersecting $\h$.
\end{conjecture}

Now, we turn to the representation theory of $\g_{\Vir}$. But first, a little warm-up example.
\begin{example}
    For $c,h\in\C$, define the family over Verma modules over $\mc V$ by
    $$
    \mc M(c,h):=\mathrm{Ind}_{\mc V_{\ge 0}}^{\mc V}\mc O_{c,h},
    $$
    where $\mc V_{> 0}$ acts trivially on $\mc O_{c,h}$, while $\mc C$ acts by $c$ and $\mc L$ acts by $h$.

    For $x=[s:t]\in\PP^1$, recall that the Cartan part of $\mc V_{\ge 0}$ equals $c$ and $L_0+\frac{t^2}{2}c$. In particular, the module $\mc M(c,h)_x$ over $\mc V_x$ is isomorphic to $M(c,h+\frac{t^2}{2s^2}c)$ if $s\ne 0$ and to $M(c)$ if $s=0$ with $L_0$ acting by $h$. If $c\ne 0$, $M(c)$ is irreducible, in which case by semi-continuity of weight space dimensions we expect $M(c,h+\frac{t^2}{2s^2}c)$ to be irreducible for countably generic $x$. We can easily see from (\ref{eq:Virasoro_highest_weights}) that it is indeed true.
\end{example}

Motivated by this example, we consider the family of Verma modules $\widetilde{\mc M}(\lambda)_x$ over $\mc G_x$. For any such module, let $\mc J_{x,\lambda}$ be the submodule generated by Serre relations
\begin{align*}
    f_i^{1+2\lambda(h_i)/a_{ii}}.1&,\qquad i\in I_{>0},\, 2\lambda(h_i)/a_{ii}\in\Z_{\ge 0},\\
    f_i.1&,\qquad i\in I, \,\lambda(h_i)=0,\\
    L_n^j.1&,\qquad \lambda(c_j)=\lambda(b_j)=0, \,n\in\Z
\end{align*}
and $\tilde{\mc I}_{x,\lambda}\widetilde{\mc M}(\lambda)_x$. It follows from Proposition \ref{p:kernel_of_Shapovalov_vs_largest_ideal} that $\mc S_{x,\lambda}$ is the largest submodule of $\widetilde{\mc M}(\lambda)_x$ not intersecting the highest weight space (the local versions are denoted by $S_\lambda$ and $J_\lambda$). Denote
$$
\mc L(\lambda):=\widetilde{\mc M}(\lambda)/\mc J_\lambda,\qquad \mc L'(\lambda):=\widetilde{\mc M}(\lambda)/\mc S_\lambda,
$$
and their corresponding local versions by $L(\lambda)$ and $L(\lambda)'$.

\begin{definition}
    We say that $\lambda\in \h^*$ is {\bfseries admissible dominant integral} if it satisfies
\begin{itemize}
    \item {\bfseries Admissibility}: for any $j\in J$, $\lambda(c_j)=0$ if and only if $\lambda(b_j)=0$.
    \item {\bfseries Dominance}: for any $i\in I$ and $j\in J$, $\lambda(h_i)\ge 0$ and $\lambda(c_j)\ge 0$.
    \item {\bfseries Integrability}: for any $i\in I_{>0}$, $2\lambda(h_i)/a_{ii}\in \Z_{\ge 0}$.
\end{itemize}
\end{definition}

Recall the following result from \cite[Theorem 3.17]{Jur96}.
\begin{theorem}
    For any symmetrizable BKM algebra, $S_\lambda=J_\lambda$, and therefore $L(\lambda)=L'(\lambda)$. Moreover, there is a Weyl-Kac type character formula for the character of $L(\lambda)$.
\end{theorem}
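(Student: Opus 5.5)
This statement is \cite[Theorem 3.17]{Jur96}, recalled from the BKM literature; it is not proved in the paper. Were I to reprove it, I would follow Kac's strategy for symmetrizable Kac--Moody algebras \cite[Chapter 10, 11]{Kac90}, in the form adapted by Borcherds and Jurisich to allow imaginary simple roots. The argument has three ingredients: a non-degenerate invariant form, the Casimir operator, and a comparison of characters through the Weyl group.

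First, use symmetrizability to build a non-degenerate invariant symmetric bilinear form on the BKM algebra $\g(\mc F)$ (after extending $\h$ if needed). This yields a generalized Casimir operator $\Omega$ acting on every module in category $\mathcal O$. On any highest weight vector of weight $\mu$ it acts by the scalar $(\mu+\rho,\mu+\rho)-(\rho,\rho)$, where $\rho$ satisfies $(\rho,\alpha_i)=\tfrac12(\alpha_i,\alpha_i)$ for all $i\in I$.

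Second, compute the character of $L(\lambda)=\widetilde M(\lambda)/J_\lambda$ by a Weyl--Kac argument.
\begin{itemize}
\item Integrability of $\lambda$ and the real-root Serre relations make the $e_i,f_i$ for $i\in I_{>0}$ act locally nilpotently on $L(\lambda)$. Hence $\ch L(\lambda)$ is $W$-invariant.
\item The relations $f_i.1=0$ when $\lambda(h_i)=0$ ($i$ imaginary), together with the BKM relations $[f_i,f_j]=0$ for $a_{ij}=0$, control the imaginary directions.
\item These give the candidate formula
$$
\ch L(\lambda)=\frac{\sum_{w\in W}\varepsilon(w)\,w\bigl(e^{\lambda+\rho}S_\lambda\bigr)}{e^{\rho}\prod_{\alpha>0}(1-e^{-\alpha})^{\dim\g_\alpha}},
$$
where $S_\lambda=\sum \pm e^{-\sum\beta}$ runs over sums of pairwise orthogonal imaginary simple roots orthogonal to $\lambda$.
\item To justify it, expand $\ch L(\lambda)$ in Verma characters. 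The Casimir restricts which $M(\mu)$ can occur to $\mu$ with $|\mu+\rho|^2=|\lambda+\rho|^2$. Then $W$-anti-invariance of the numerator, together with a dominance argument on $\mu+\rho$, shows that only the terms $w(\lambda+\rho-\sum\beta)-\rho$ survive, with the stated signs.
\end{itemize}

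Third, prove $J_\lambda=S_\lambda$. Since $S_\lambda$ is the maximal proper submodule (Proposition \ref{p:kernel_of_Shapovalov_vs_largest_ideal}), we have $J_\lambda\subseteq S_\lambda$. It remains to show that $L(\lambda)$ is irreducible. Suppose $L(\lambda)$ had a primitive vector of weight $\mu<\lambda$. Casimir equality gives $|\mu+\rho|^2=|\lambda+\rho|^2$. Write $\lambda-\mu=\sum k_i\alpha_i$. Then
$$
2(\lambda+\rho,\lambda-\mu)-|\lambda-\mu|^2=0,
$$
and dominance of $\lambda$ together with $(\alpha_i,\alpha_j)\le0$ for $i\ne j$ forces every contribution to vanish. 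This means $\lambda-\mu$ is a sum of mutually orthogonal imaginary simple roots that are orthogonal to $\lambda$, or else a real root is involved, which integrability excludes. In both cases the corresponding vector already lies in $J_\lambda$, because of the relations $f_i.1=0$ and the commutation relations.

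The hardest step is this last one: showing that a primitive vector of such a weight genuinely lies in $J_\lambda$. The difficulty is in the case of isotropic simple roots with $a_{ii}=0$, where the Heisenberg Verma module argument of \cite[Proposition 1.7.2]{FLM89} has to be combined with the commutativity Serre relations.
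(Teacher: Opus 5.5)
Your architecture (invariant form, Casimir operator, $W$-anti-invariance of the numerator, bookkeeping of imaginary simple roots) is the Kac--Borcherds--Jurisich argument behind \cite[Theorem 3.17]{Jur96}. However, Step 3 fails as written. With $\beta=\lambda-\mu=\sum_i k_i\alpha_i$ one has
\[
2(\lambda+\rho,\beta)-|\beta|^2=\sum_i 2k_i(\lambda,\alpha_i)+\sum_i (k_i-k_i^2)(\alpha_i,\alpha_i)-\sum_{i\ne j}k_ik_j(\alpha_i,\alpha_j).
\]
The middle term is strictly negative for a real simple root with $k_i\ge 2$, so nothing forces the terms to vanish individually.

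Already for $\sl_2$ with $\lambda(h)=n\ge 1$, the weight $\mu=\lambda-(n+1)\alpha$ of the singular vector $f^{n+1}.1\in\widetilde M(\lambda)$ satisfies the Casimir equality through the cancellation $2n(n+1)-2n(n+1)=0$. Your computation uses only the dominance of $\lambda$ and cannot distinguish this case. So your phrase ``or else a real root is involved, which integrability excludes'' carries the whole difficulty rather than being a side remark.

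The missing input is that $\mu$ itself is dominant on $I_{>0}$. The primitive vector generates a highest weight subquotient of $\widetilde M(\lambda)/J_\lambda$, which is integrable for $I_{>0}$, so $(\mu,\alpha_i)\ge 0$ for $i\in I_{>0}$. You must then use the symmetric splitting
\[
|\lambda+\rho|^2-|\mu+\rho|^2=(\beta,\lambda+\rho)+(\beta,\mu+\rho).
\]
In this splitting each real coordinate contributes $k_i\bigl((\alpha_i,\lambda)+(\alpha_i,\mu)+(\alpha_i,\alpha_i)\bigr)$, which is positive when $k_i>0$. Only the imaginary coordinates are expanded via $\mu=\lambda-\beta$, each contributing $k_i\bigl(2(\alpha_i,\lambda)+(1-k_i)(\alpha_i,\alpha_i)-\sum_{j\ne i}k_j(\alpha_i,\alpha_j)\bigr)\ge 0$. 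Only now does the Casimir equality force $\beta$ to be supported on pairwise orthogonal imaginary simple roots orthogonal to $\lambda$.

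Alternatively, you can drop primitive vectors entirely. Your Step 2 uses only three properties of the module: it is a highest weight module, it is integrable for $I_{>0}$, and $f_i.1=0$ whenever $\lambda(h_i)=0$. Both $\widetilde M(\lambda)/J_\lambda$ and its quotient $\widetilde M(\lambda)/S_\lambda$ have these properties, so they have the same character, and $J_\lambda\subseteq S_\lambda$ then gives equality. This is how the Kac--Moody case is handled in \cite[Chapter 10]{Kac90}, and it makes Step 3 unnecessary.

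Conversely, the step you single out as hardest is immediate. Suppose $(\lambda,\beta)=0$ with $\lambda$ dominant. Then every simple root $\alpha_j$ occurring in any decomposition of $\beta$ satisfies $\lambda(h_j)=0$. Hence every spanning monomial $f_{j_1}\cdots f_{j_r}.1$ of $\widetilde M(\lambda)_{\lambda-\beta}$ ends with some $f_{j_r}.1\in J_\lambda$, and no Heisenberg Verma module argument is needed.

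Isotropic roots do need care in Step 2. The dominance argument leaves $k_i$ unconstrained when $(\alpha_i,\alpha_i)=0$, so it gives neither the vanishing of terms with $k_i\ge 2$ nor the signs. Both come from computing the coefficient of $e^{\lambda+\rho-\beta}$ directly. By the vanishing just described, it equals the coefficient of $e^{-\beta}$ in $\prod_{\alpha>0}(1-e^{-\alpha})^{\dim\g_\alpha}$. The only positive roots below $\beta$ are the $\alpha_i$ in its support, each of multiplicity one, because pairwise orthogonal simple root vectors commute by the Serre relations. So the coefficient is $\pm1$ if all $k_i=1$ and $0$ otherwise.

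Finally, your $S_\lambda$ in the character formula clashes with the paper's $S_\lambda$, which denotes the radical of the Shapovalov form.
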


Using the interpretation of $S_\lambda$ as the kernel of the Shapovalov form in Proposition \ref{p:kernel_of_Shapovalov_vs_largest_ideal}, we get the following result with the same proof as of Theorem \ref{t:characters_of_g_in_families}.

\begin{theorem}\,\label{t:characters_of_reps_in_families}
    Assume that $\mc F$ is symmetrizable, and choose admissible dominant integral $\lambda\in\h^*$. We have
    \begin{align*}
         \tilde{\mc J}_{x,\lambda}&=\tilde{\mc J}_{0,\lambda},&\text{for any } x\in\PP^1,\\
         \tilde{\mc J}_{x,\lambda}&=\tilde{\mc J}_{0,\lambda}=\tilde{\mc S}_{0,\lambda}=\tilde{\mc S}_{x,\lambda},\quad &\text{for generic } x\in\PP^1.
    \end{align*}
        In particular, 
        $$
        \dim\mc L(\lambda)_{x,\mu}=\dim\mc L(\lambda)_{0,\mu},\qquad \mu\in\h^*,x\in\mathbb P^1,
        $$
        and these modules are irreducible for countably generic $x$. 
         
         %Therefore, it satisfies the usual denominator identity for BKM algebras.
\end{theorem}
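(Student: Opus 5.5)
The proof follows the two-step pattern of the proof of Theorem \ref{t:characters_of_g_in_families}, now carried out inside the family of Verma modules $\widetilde{\mc M}(\lambda)$ over $\tilde{\mc G}$.

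\textbf{Step 1: the Serre submodule has constant rank.} Fix a weight $\mu$. First I show that the weight spaces $\widetilde{\mc M}(\lambda)_{x,\mu}\simeq U(\tilde\n_{-,x})_{\mu-\lambda}$ form a vector bundle over $\PP^1$. By Theorem \ref{t:tilde_g_Virasoro_facts}(ii), $\tilde\n_{-,x}$ is freely generated by the $f_i$ and by the lines spanned by $Y_{-n}(s,t)$. The same holds at $s=0$, since the BKM negative part $\tilde\n_{\Heis,-}$ is also free and the characters $\ch\Vir^j_{<0}=\ch\Heis^j_{<0}$ agree. Hence the ranks are constant.

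Next, the generators of $\mc J_{x,\lambda}$ are written uniformly in $x$. These are the vectors $f_i^{1+2\lambda(h_i)/a_{ii}}.1$, the vectors $f_i.1$, the vectors $Y_n(s,t).1$ for those $j$ with $\lambda(c_j)=\lambda(b_j)=0$ (admissibility makes this the same condition as $\lambda(c_j)=0$, which is independent of $x$), and $\tilde{\mc I}_x\widetilde{\mc M}(\lambda)_x$. By Theorem \ref{t:characters_of_g_in_families}, $\tilde{\mc I}$ has constant rank in each root degree. The plan is to use the isomorphism $\tilde\n_{\Vir,x}\simeq\tilde\n_{\Vir,y}$ for $s\neq 0$, together with the filtration degenerating $\tilde\n_{\Vir}$ to $\tilde\n_{\Heis}$ (the associated graded at $s=0$). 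This gives $\dim\mc J_{x,\lambda,\mu}=\dim \mc J_{0,\lambda,\mu}$ for all $x$, and hence $\dim\mc L(\lambda)_{x,\mu}=\dim\mc L(\lambda)_{0,\mu}$. Concretely, I expect $\mc J_{x,\lambda}$ to be a subbundle whose fiber dimensions are equal for $s\neq0$ by this isomorphism, and bounded at $s=0$ by semicontinuity. So the argument reduces, exactly as in (\ref{eq:inequalities_between_characters}), to the chain
$$
\dim L_\Heis(\lambda)_\mu\ge\dim \mc L(\lambda)_{x,\mu}\ge\dim \mc L'(\lambda)_{x,\mu}.
$$

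\textbf{Step 2: closing the chain via the Shapovalov form.} By Proposition \ref{p:kernel_of_Shapovalov_vs_largest_ideal}, $\mc S_{x,\lambda,\mu}$ is the kernel of $A_\lambda$ restricted to the weight space. The form $A_\lambda$ is given by a matrix of regular sections in a local frame of the bundle, because $\theta$, $\pi$ and the bracket are all defined globally on $\tilde{\mc G}_{\Vir,\Heis}$. Its rank is therefore lower semicontinuous, so $\dim\mc S_{x,\lambda,\mu}$ is upper semicontinuous and $\dim\mc S_{\mathrm{generic}}\le\dim\mc S_0$. At $x=0$ the algebra is a symmetrizable BKM algebra, and $\lambda$ is dominant integral for it (using $\lambda(c_j)\ge0$ and $\lambda(h_i)\ge0$). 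So \cite[Theorem 3.17]{Jur96} gives $\mc S_{0,\lambda}=\mc J_{0,\lambda}$. Combined with $\mc J_{x,\lambda}\subseteq\mc S_{x,\lambda}$ and Step 1, the chain becomes a chain of equalities for $x$ outside the closed proper subset where the rank drops, for each fixed $\mu$. Taking the union over the countably many $\mu$, we get irreducibility of $\mc L(\lambda)_x=\mc L'(\lambda)_x$ for countably generic $x$.

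\textbf{Main obstacle.} The hard part is Step 1 at $s=0$. There I must check that the Virasoro Serre generators $Y_n(s,t).1$ and $(\operatorname{ad}f_i)^k Y_n$ specialise to the Heisenberg ones without the rank jumping upward on the generic fibre. In other words, the dimension of the fibre of the subsheaf generated by these sections must be the same at $0$ as generically. To do this I would saturate: I would replace $\mc J$ by its saturation in $\widetilde{\mc M}(\lambda)$, which has constant rank, and then argue that its fibre at $0$ is contained in $\mc S_{0,\lambda}=\mc J_{0,\lambda}$. This gives the upper bound $\dim\mc J_{x}\le\dim\mc J_0$. The reverse inequality follows because each Heisenberg generator is the $s\to0$ limit of a Virasoro one.
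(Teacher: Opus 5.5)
Your proposal is correct and is essentially the paper's argument, which reruns the proof of Theorem \ref{t:characters_of_g_in_families}: the chain $\dim L_\Heis(\lambda)_\mu\ge\dim\mc L(\lambda)_{x,\mu}\ge\dim\mc L'(\lambda)_{x,\mu}$ is closed by upper semicontinuity of the Shapovalov kernel (Proposition \ref{p:kernel_of_Shapovalov_vs_largest_ideal}) and by $S_\lambda=J_\lambda$ on the symmetrizable BKM fibre \cite[Theorem 3.17]{Jur96}, and this chain already rules out an upward jump of $\dim\mc J$ on the generic fibre, so your closing saturation paragraph is unnecessary. Two small cautions: only the $U(\tilde\n_-)$-level part of Theorem \ref{t:ass_graded_of_Vir_is_Heis} may be used here, because the paper deduces its descent to the Serre quotients from the present theorem, so Step 1 gives only $\dim\mc J_{0,\lambda,\mu}\le\dim\mc J_{x,\lambda,\mu}$; and constancy of $\dim\mc J_{x,\lambda,\mu}$ over $s\neq0$ does not come from an isomorphism of Verma modules (their Virasoro highest weights $\lambda(b_j)+\tfrac12(t/s)^2\lambda(c_j)$ vary with $x$), but from the fact that $\mc J_{x,\lambda}$ is $U(\tilde\n_-)$ applied to the span of the Serre vectors together with $U(\tilde\n_-)\tilde I_-$, all of which is independent of $x$.
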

\begin{remark}
    Nore that the statement analogous to Conjecture \ref{conj:char_of_g} is certainly not true in this case since Verma modules for a single Virasoro algebra (which is a particular example of a VBKM algebra) are not always irreducible.
\end{remark}

\begin{proposition}
    Assume that $\lambda$ is admissible dominant integral. The operator $n_i$ from \ref{p:Weyl_group_on_g} acts on $\mc L(\lambda)_x$ and sends the $\mu$-weight space to the $s_i(\mu)$-weight space. In particular, dimensions of weight spaces are invariant under the Weyl group action.
\end{proposition}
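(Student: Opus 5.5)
The plan is to follow the standard Kac-Moody argument (\cite[Lemma 3.8, Proposition 3.6]{Kac90}), which uses only two inputs. First, $e_i$ and $f_i$ must act locally nilpotently on $\mc L(\lambda)_x$. Second, $\mc L(\lambda)_x$ must be a direct sum of finite-dimensional $\h$-weight spaces, so that $\exp\operatorname{ad}$ and $\exp$ of $e_i,f_i$ make sense. The second input holds because $\mc L(\lambda)_x$ is a quotient of the Verma module $\widetilde{\mc M}(\lambda)_x\simeq U(\tilde\n_{-,x})$, whose weight spaces are finite-dimensional by the finiteness condition (\ref{eq:finiteness_condition}) for the multiset of simple roots. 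So the real work is local nilpotency for $i\in I_{>0}$.

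For local nilpotency, fix $i\in I_{>0}$ and let $\mathfrak s_i=\lge e_i,h_i,f_i\rge\simeq\sl_2$ after rescaling. Let $V\subset\mc L(\lambda)_x$ be the set of vectors on which both $e_i$ and $f_i$ act locally nilpotently.
\begin{enumerate}[label=(\alph*)]
\item The highest weight vector $v_\lambda$ lies in $V$. Indeed $e_iv_\lambda=0$, and $f_i^{1+2\lambda(h_i)/a_{ii}}v_\lambda=0$ because this is one of the generators of $\mc J_{x,\lambda}$, using integrability of $\lambda$.
\item $V$ is stable under the generators of $\g_{\Vir,x}$. For this, use the identity
$$
y^N z v=\sum_k\binom Nk(\operatorname{ad}y)^k(z)\,y^{N-k}v,\qquad y\in\{e_i,f_i\}.
$$
It reduces the claim to local nilpotency of $\operatorname{ad}e_i$ and $\operatorname{ad}f_i$ on the generators $e_k,f_k,\h$ and $sL_n^j+tx_n^j$ inside $\g_{\Vir,x}$. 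On $\h$ this is clear. On $e_k$ and $f_k$ it follows from the BKM Serre relations in $\tilde{\mc I}_x$. On the Virasoro-type generators it follows from the global Serre relations $(\operatorname{ad}e_i)^{1+2n\gamma_j(h_i)/a_{ii}}(sL_n^j+tx_n^j)$ and their $\omega$-images. These exponents are nonnegative integers because $\gamma_j(h_i)\in\Z_{\le 0}$ for $i\in I_{>0}$.
\end{enumerate}
Since $V$ is a subspace containing $v_\lambda$ and stable under the generators, and $\mc L(\lambda)_x$ is cyclic on $v_\lambda$, we get $V=\mc L(\lambda)_x$. I expect this step to be the main obstacle. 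The point to check is that the Serre relations for $(\operatorname{ad}e_i)$ and $(\operatorname{ad}f_i)$ acting on $Y_n(s,t)$ really lie in $\tilde{\mc I}_x$ for every $x$. For $n>0$ they come from $X_+$. For $n<0$ they come from $X_-$, i.e. from the largest proper submodule of $U(\g_i)Y_n^j$, which is a Verma module with highest weight $n\gamma_j(h_i)\ge0$ in the normalization of case (ii) of Proposition \ref{p:ideal_in_g_tilde}. The remaining direction of nilpotency follows by $\sl_2$-finite-dimensionality: the submodule generated under $\mathfrak s_i$ is finite-dimensional once both $e_i$ and $f_i$ are nilpotent on it.

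With local nilpotency in hand, $n_i:=\exp e_i\exp(-f_i)\exp e_i$ is a well-defined invertible operator on $\mc L(\lambda)_x$. Then $\mc L(\lambda)_x$ decomposes as a direct sum of finite-dimensional $\mathfrak s_i$-modules, and the standard computation $n_i h n_i^{-1}=h-\alpha_i(h)\,h_i'$, with $h_i'=2h_i/a_{ii}$, gives $n_i(\mc L(\lambda)_{x,\mu})=\mc L(\lambda)_{x,s_i(\mu)}$. Because $n_i$ is invertible, the weight spaces for $\mu$ and $s_i(\mu)$ have equal dimension. Iterating over a reduced expression of $w\in W$ gives Weyl group invariance of the weight multiplicities. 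One can alternatively combine this with Theorem \ref{t:characters_of_reps_in_families} and transport the known invariance from $x=0$, but the direct argument above works for every $x$.
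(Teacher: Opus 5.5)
Your proof is correct and takes the paper's route: the paper just says $e_i,f_i$ ($i\in I_{>0}$) act locally nilpotently because the Serre relations have been quotiented out, then cites \cite[Lemma 3.8]{Kac90}, and you write out that argument (the locally nilpotent vectors form a subspace containing $v_\lambda$ that is stable under the generators, followed by the standard $\sl_2$ computation for $n_i$). One correction: for $n>0$ the relation is $(\operatorname{ad}e_i)^{1-2n\gamma_j(h_i)/a_{ii}}(sL_n^j+tx_n^j)=0$, the $\omega$-image of the $n<0$ relation you correctly get from $X_-$, and the exponent is an integer only if $2\gamma_j(h_i)/a_{ii}\in\Z_{\le 0}$, which is stronger than $\gamma_j(h_i)\in\Z_{\le 0}$ and is a normalization the paper also assumes tacitly.
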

\begin{proof}
    Similar to the proof of \ref{p:Weyl_group_on_g}, see \cite[Lemma 3.8]{Kac90}.
\end{proof}

% \begin{theorem}
%     For a countably generic choice of $b_i$ in the Cartan datum $\mc F$, the irreducible representation $V(\lambda)$ of $\g_{\Vir}$ of highest weight $\lambda$ has the same character as the similar representation for the corresponding $\g_{\Heis}$. In particular, if $\lambda$ is integral with respect to the Cartan datum of $\g_{\Heis}$, then the character of $V(\lambda)$ satisfies the Weyl-Kac character formula.
% \end{theorem}
% \begin{proof}
    
% \end{proof}

\subsection{Filtration on the positive part of VBKM algebras}
We can give another connection between $\g_{\Vir}$ and $\g_{\Heis}$. Introduce a filtration on $U(\tilde\n_{\Vir,-})$ by setting $\deg f_i=\deg L_n^j=1$ for all $i\in I,j\in J$ and $n<0$ and the property that multiplication preserves this filtration. This also gives a filtration on all Verma modules $M(\lambda)$ compatible with the filtration on $U(\tilde\n_-)$.
\begin{theorem}\label{t:ass_graded_of_Vir_is_Heis}
    There are isomorphisms
    \begin{align*}
        \tilde f:U(\tilde\n_{\Heis,-})&\xrightarrow{\sim} \operatorname{gr}U(\tilde\n_{\Vir,-}),\\
        \widetilde M_\Heis(\lambda)&\xrightarrow{\sim} \operatorname{gr}\widetilde M_\Vir(\lambda)
    \end{align*}
    sending $f_i$ to $f_i$ and $h_n^j$ to $L_n^j$. The isomorphisms of modules are as of modules over $\tilde\n_-$.
    
    They the ideal (submodule) generated by Serre relations into the ideal (submodule) generated by Serre relations and descends to isomorphisms
    \begin{align*}
         f:U(\n_{\Heis,-})&\xrightarrow{\sim} \operatorname{gr}U(\n_{\Vir,-}),\\
        M_\Heis(\lambda)&\xrightarrow{\sim} \operatorname{gr}M_\Vir(\lambda),\\
        L_\Heis(\lambda)&\xrightarrow{\sim} \operatorname{gr}L_\Vir(\lambda).
    \end{align*}
    The isomorphisms of modules are as of modules over $\n_-$
\end{theorem}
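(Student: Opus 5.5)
The plan is to work first at the level of the free algebras. By Theorem \ref{t:tilde_g_Virasoro_facts}(ii), $\tilde\n_{\Vir,-}$ is the free product of the $\n_{i,-}$ (each $\C f_i$) and the $\Vir^j_{<0}$. Hence $U(\tilde\n_{\Vir,-})$ is the free product of the associative algebras $\C[f_i]$ and $U(\Vir^j_{<0})$. The analogous statement for the BKM algebra $\g_\Heis$ gives $U(\tilde\n_{\Heis,-})$ as the free product of the $\C[f_i]$ and the $U(\Heis^j_{<0})$, where $U(\Heis^j_{<0})=\C[h_n^j:n<0]$ is commutative.

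With the filtration $\deg f_i=\deg L^j_n=1$, the associated graded of a free product is the free product of the associated gradeds, because the filtration is by word length and the normal-form basis of a free product is compatible with it. It then suffices to identify each factor. For $U(\Vir^j_{<0})$, the filtration is the PBW filtration, since every generator $L^j_n$ has degree $1$. The commutators $[L_n^j,L_m^j]=(n-m)L^j_{n+m}$ drop filtration degree, and $\delta_{n,-m}$ never fires for $n,m<0$. So $\operatorname{gr}U(\Vir^j_{<0})\simeq S(\Vir^j_{<0})\simeq \C[h^j_n]=U(\Heis^j_{<0})$ via $h^j_n\mapsto L^j_n$. This yields the isomorphism $\tilde f$. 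Since $\widetilde M(\lambda)\simeq U(\tilde\n_-)$ as a left $\tilde\n_-$-module, the Verma statement follows immediately. The $\tilde\n_-$-module structure is respected because left multiplication by $f_i$ and $L_n^j$ raises filtration degree by exactly one, matching the action of $f_i$ and $h_n^j$.

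Next I would check that $\tilde f$ carries the Serre relations to Serre relations. The relations $(\operatorname{ad}f_i)^{1-2a_{ik}/a_{ii}}f_k$ and $[f_i,f_k]$ lie in the top filtration piece and are literally the same expressions. The relations $(\operatorname{ad} f_i)^{1+2n\gamma_j(h_i)/a_{ii}}L^j_n$ have as symbol the corresponding expression in $h^j_n$, because $\operatorname{ad}f_i$ is homogeneous of degree one in the graded algebra. The relations of types (iii) and (iv), the commutators $[L_m^j,f_i]$ and $[L_m^j,L^k_n]$, likewise map to $[h^j_m,f_i]$ and $[h^j_m,h^k_n]$. Their compatibility conditions match, because the conditions $\alpha_i(c_j)=0$ and $\gamma_k(c_j)=0$ defining them coincide with the vanishing of the corresponding BKM Cartan entries. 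This gives an inclusion $\tilde f(\tilde I_{\Heis,-})\subset \operatorname{gr}(\tilde I_{\Vir,-})$, and hence a surjection $U(\n_{\Heis,-})\to\operatorname{gr}U(\n_{\Vir,-})$. The same argument for the generators of $\mc J_\lambda$ gives surjections onto $\operatorname{gr}M_\Vir(\lambda)$ and $\operatorname{gr}L_\Vir(\lambda)$.

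The main obstacle is injectivity, i.e.\ showing that the symbols of the Serre ideal are no larger than the Serre ideal of $\g_\Heis$. The ideal $\tilde I_{\Vir,-}$ is generated using brackets that involve lower-order Virasoro terms, so a priori $\operatorname{gr}\tilde I_{\Vir}$ could be strictly bigger. I would not attack this directly but use a dimension count. Surjectivity gives $\dim\operatorname{gr}U(\n_{\Vir,-})_\alpha\le\dim U(\n_{\Heis,-})_\alpha$ weight by weight. Theorem \ref{t:characters_of_g_in_families} gives $\dim\g_{\Vir,\alpha}=\dim\g_{\Heis,\alpha}$, so by PBW the two enveloping algebras have equal graded characters, and a surjection between finite-dimensional weight spaces of equal dimension is an isomorphism. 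For modules I would argue the same way using Theorem \ref{t:characters_of_reps_in_families}. This forces the symmetrizability hypothesis and admissible dominant integral $\lambda$ to be carried over from those theorems, and I expect the write-up to need to make that dependence explicit.
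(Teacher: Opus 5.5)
Your proposal is correct and follows essentially the paper's own proof. Taking the associated graded commutes with free products and turns $U(\Vir^j_{<0})$ into the polynomial algebra in the symbols $L^j_n$. Every Serre generator on the $\Heis$ side is the symbol of a Serre generator on the $\Vir$ side, so $\tilde f$ descends to a surjection, and the equality of characters from Theorems \ref{t:characters_of_g_in_families} and \ref{t:characters_of_reps_in_families} upgrades it to an isomorphism.

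You are also right that symmetrizability of $\mc F$ and admissible dominant integrality of $\lambda$ are silently inherited from those two theorems. The statement omits these hypotheses, but the paper's proof relies on them in exactly the way yours does.
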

\begin{proof}
    We do the proof for $U(\tilde \n_-)$, as the proofs for the modules follow the same arguments. Since $\deg L_n^j=\deg L_m^j=1$ and $\deg[L_n^j,L_m^j]=\deg L_{m+n}=1$, the associated elements $\bar L_n^j$ and $\bar L_m^j$ in the first graded component of $\operatorname{gr}U(\tilde\n_{\Vir,-})$ commute, hence generate the positive part of a Heisenberg algebra. Since the associated graded construction commutes with free products of algebras, we get the desired isomorphism $\tilde f$.

    It is easy to see from the form of Serre relations that they are preserved by $\tilde f$. This proves that $\tilde f$ descends to a surjective map $f$. By Theorem \ref{t:characters_of_g_in_families} (Theorem \ref{t:characters_of_reps_in_families} for modules), the characters of the domain and codomain of $f$ are the same, which implies that $f$ is an isomorphism.
\end{proof}

% Then Conjecture \ref{conj:char_of_g} is equivalent to
% \begin{conjecture}
%     The map $f$ from Lemma \ref{l:ass_graded_of_Vir_is_Heis} is an isomorphism.
% \end{conjecture}

% \begin{lemma}
%     Let $\g^1,\g^2,\g^3$ be Lie algebras graded by some lattice $L$. Assume that $\ch\g^2=\ch\g^3$, and let $e_i^1,e_i^2$ be graded bases of $\g^2$ and $\g^3$ such that $e_i^1$ and $e_i^2$ have the same degree for any $i$. Let $I^1$ be an ideal of $\g^1*\g^2$ generated by 
%     $$
%     \ch(\g_1*\g_2)=\ch(\g_1*\g_3),
%     $$
%     where $*$ denotes the free product.
% \end{lemma}
% \begin{proof}

% \end{proof}

\section{Examples}\label{s:examples}
\subsection{Lattice vertex operator algebras}
Let $L$ be an even lattice, i.e. a free abelian group with a non-degenerate bilinear product $(\cdot,\cdot)$ with
$$
\alpha^2:=(\alpha,\alpha)\in2\Z,\qquad \alpha\in L.
$$

Denote by $L_\C$ the complexification of $L$. To $L$, there is an associated {\bfseries lattice vertex operator algebra (lattice VOA)} $V_L$. As a vector space,
$$
V_L=S(t^{-1}L_\C[t^{-1}])\otimes \C[L]=\bigoplus_{\alpha\in L}S(t^{-1}L_\C[t^{-1}])\ket\alpha,
$$
where by $\ket\alpha$ we denote the element of the group algebra $\C[L]$ corresponding to $\alpha\in L$. We denote the summand $S(t^{-1}L_\C[t^{-1}])\ket\alpha$ by $V_{L,\alpha}$.

We briefly describe the necessary structure on $V_L$ and refer the reader to \cite[Section 5.4]{FBZ04} and \cite[Section 5]{Kac98} for details. For a power series $f\in W\jet{z,z^{-1}}$ for some vector space $W$, we write
$$
\int f(z)\,dz
$$
for the $z^{-1}$-coefficient of $f$. In particular,
$$
\int z^nf(z)\,dz
$$
returns the $z^{-1-n}$-coefficient of $f$. Because of this, we write
$$
f=\sum_{n\in\Z}f_{n}z^{-n-1},\qquad f_n=\int z^nf(z)\,dz.
$$

One needs to choose a cocycle $c:L\times L\to\{\pm 1\}$. Its exact form and meaning will not be important for our computations, see \cite[Section 5.4]{FBZ04} for details on this. We extend this to a pairing between $L$ and $V_L$ given by
$$
c(\alpha,Q\ket\beta):=c(\alpha,\beta)Q\ket\beta.
$$
We also introduce operators $e^\alpha$ on $V_L$ for $\alpha\in L$ as
\begin{align*}
    e^\alpha(Q\ket\beta):=Q\ket{\alpha+\beta}.
\end{align*}

In $S(t^{-1}L_\C[t^{-1}])$, we denote elements $\alpha\otimes t^{-n}$ by $\alpha(-n)$. These also act as operators on $V_L$ by left multiplication. We can extend these operators to a representation of the Heisenberg algebra associated to $L$ letting $\alpha(n)$ act by derivations, central element act by $1$, and $\alpha(0)$ act on $Q\ket\beta$ by $(\alpha,\beta)$. This turns each $V_{L,\alpha}$ into the Verma module for this Heisenberg algebra with the $0$-element $\alpha(0)$ acting by $(\alpha,\beta)$.

We define the vertex operator $x_n$ for $x\in V_L$ in the following way:
\begin{align*}
    (\alpha(-m)\ket 0)_n&:=\binom{n}{m-1}\alpha(n-m-1),\\
    \ket\alpha_n&:=\int e^\alpha c(\alpha,-) z^{n+(\alpha,-)}\exp\left(\sum_{n>0}\frac{\alpha(-n)}nz^n\right)\exp\left(-\sum_{n>0}\frac{\alpha(n)}nz^{-n}\right),\\
    (a_{-1}b)_n&=\sum_{k\ge 0}a_{n-k-1}b_k+\sum_{k<0}b_{k}a_{n-k-1}.
\end{align*}
These rules define $x_n$ recursively for any $x\in V_L$. There are lots of other relations between these operators called operator product expansions or Borcherds identities, to which we refer to \cite{FBZ04} again. The construction we just gave will be enough for our purposes.

\begin{remark}
    In future computations, we will not care about the cocycle since we will do our computations on elements in $V_{L,\alpha}$, hence we can manipulate the signs by changing these elements by their negatives. For example, replacing the central charge in $\Heis$ by its negative, we get relations
    $$
    [x_n,x_m]=-n\delta_{m,-n}c.
    $$

    Alternatively, we have flexibility with choosing the cocycle and for any two elements $\alpha,\beta\in L$, we can always choose the cocycle such that $c(\alpha,\beta)=1$ (for example, by setting $c'(\alpha,\beta):=c(\beta,\alpha)$). See \cite[Remark 5.5a]{Kac98} for details.
\end{remark}

Another important ingredient is the Virasoro algebra of operators acting on $V_L$. Choose dual bases $h_i$ and $h^i$ of $L_\C$ and define
$$
w:=\frac12\sum_ih_i(-1)h^i(-1).
$$
We recall some crucial facts about this vector.
\begin{theorem}\,\label{t:conformal_structure_properties}
    \begin{enumerate}
        \item The operators $L_n:=w_{n+1}$ satisfy Virasoro relations with central charge $\operatorname{rk} L$. If $P$ is a homogeneous element of $S(t^{-1}L_\C[t^{-1}])$ with respect to the $t$-grading, then
        $$
        L_0(P\ket\alpha)=\deg P+\frac{\alpha^2}{2}\in\Z.
        $$
        \item If $L_0x=ax$ for $a\in\Z$, then $[L_0,x_n]=(1+a-n)x_n$.
        \item $V_L/L_{-1}V_L$ and its Virasoro degree one piece $V_{L,1}/L_{-1}V_{L,0}$ is a Lie algebra with the Lie bracket given by $[x,y]:=x_0y$.
    \end{enumerate}
\end{theorem}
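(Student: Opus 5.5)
We verify the three items of Theorem \ref{t:conformal_structure_properties} directly from the recursive definition of the vertex operators $x_n$ given above. The standard literature (\cite[Section 5.4]{FBZ04}, \cite[Section 5]{Kac98}) proves all three, so the plan is to reduce each item to a short computation with the Heisenberg modes $\alpha(n)$.

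For (1), we apply the normal ordered product rule $(a_{-1}b)_n=\sum_{k\ge0}a_{n-k-1}b_k+\sum_{k<0}b_ka_{n-k-1}$ with $a=h_i(-1)\ket0$ and $b=h^i(-1)\ket0$. Using $(h(-1)\ket0)_k=h(k)$, this gives
$$
L_n=w_{n+1}=\frac12\sum_i\sum_{k\in\Z}{:}h_i(n-k)h^i(k){:},
$$
which is the Sugawara construction for the rank $\operatorname{rk}L$ Heisenberg algebra. The commutators $[L_n,\alpha(m)]=-m\alpha(n+m)$ follow from the Heisenberg relations, since duality of the bases gives $\sum_i(h_i,\alpha)h^i=\alpha$. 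We then compute $[L_n,L_m]$ by the usual normal-ordering argument. The only delicate term is the central one, where reordering the finitely many non-normally-ordered terms with $n=-m$ produces $\frac{n^3-n}{12}\sum_i(h_i,h^i)=\frac{n^3-n}{12}\operatorname{rk}L$. For the eigenvalue, on $P\ket\alpha$ we use that $L_0=\frac12\sum_ih_i(0)h^i(0)+\sum_i\sum_{k>0}h_i(-k)h^i(k)$. The first term gives $\frac12\sum_i(h_i,\alpha)(h^i,\alpha)=\alpha^2/2$, and the second term acts as the $t$-degree operator, because $[L_0,\alpha(-k)]=k\alpha(-k)$ and $L_0$ kills $\ket\alpha$ up to the zero-mode term. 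Since $L$ is even, $\deg P+\alpha^2/2\in\Z$.

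For (2), we first derive $[L_{-1},x_n]=-nx_{n-1}$, i.e. the translation property $(L_{-1}x)_n=-nx_{n-1}$, together with the commutator formula $[L_m,x_n]=\sum_{j\ge0}\binom{m+1}{j}(L_{j-1}x)_{m+n-j}$. The cleanest route is to check this formula on the generators $\alpha(-1)\ket0$ and $\ket\alpha$, which is explicit from the exponential formula, and then propagate it through the recursion $(a_{-1}b)_n$. The propagation works because the set of $x$ satisfying the formula is closed under $(a,b)\mapsto a_{-1}b$ by a Leibniz-type computation, which in turn uses $L_{-1}(a_{-1}b)=(L_{-1}a)_{-1}b+a_{-1}L_{-1}b$. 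Setting $m=0$ gives
$$
[L_0,x_n]=(L_{-1}x)_n+(L_0x)_{n-1}\cdot 0+\dots .
$$
More precisely, $[L_0,x_n]=(L_{-1}x)_n+(L_0x)_{n-1}$, and since $(L_{-1}x)_n=-nx_{n-1}$ the indices shift accordingly. With the convention $x_n$ as the $z^{-n-1}$ coefficient, this yields $[L_0,x_n]=(a-n-1+\ldots)$; we will carefully track the index shift to land on $(1+a-n)x_n$, which is where a sign or off-by-one error is most likely.

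For (3), we use the Borcherds commutator identity $[a_0,b_n]=(a_0b)_n$, which follows from the Borcherds identities in \cite{FBZ04}. Together with skew-symmetry $b_0a=-a_0b+L_{-1}(\ldots)$ and $(L_{-1}x)_0=0$, this shows that $L_{-1}V_L$ is a two-sided ideal for the bracket $x_0y$, that the bracket is antisymmetric modulo $L_{-1}V_L$, and that the Jacobi identity holds. By (2) with $n=0$ and $a=1$, $x_0$ commutes with $L_0$, so the degree one piece is a subalgebra, and $L_{-1}V_L\cap V_{L,1}=L_{-1}V_{L,0}$. The main obstacle is (2): establishing the commutator formula for all $x$ via the recursion, which is where we will lean on the cited references if the induction becomes heavy.
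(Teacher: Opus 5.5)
The paper does not prove this theorem; it only recalls these facts from \cite{FBZ04} and \cite{Kac98}. Your arguments for items (1) and (3) are the standard ones and are fine. Note that you correctly use $(h(-1)\ket0)_k=h(k)$, as the paper's later computations do, rather than the misprinted mode formula for $\alpha(-m)\ket0$.

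The gap is in item (2). First, your commutator formula is off by one. From $[w_{m+1},x_n]=\sum_{j\ge0}\binom{m+1}{j}(w_jx)_{m+1+n-j}$ and $w_j=L_{j-1}$ one gets $[L_m,x_n]=\sum_{j\ge0}\binom{m+1}{j}(L_{j-1}x)_{m+n+1-j}$. With your index $m+n-j$, the case $m=-1$ already contradicts your own translation property $[L_{-1},x_n]=-nx_{n-1}$. The case $m=0$ gives $(L_{-1}x)_n+(L_0x)_{n-1}=(a-n)x_{n-1}$, which is not even a multiple of $x_n$. With the correct index, $[L_0,x_n]=(L_{-1}x)_{n+1}+(L_0x)_n=(a-n-1)x_n$.

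Second, and more seriously, you propose to track indices until you ``land on $(1+a-n)x_n$''. That step cannot succeed, because the printed formula is false under the paper's own conventions ($x_n$ is the $z^{-n-1}$-coefficient, $L_n=w_{n+1}$). For $x=\alpha(-1)\ket0$ one has $a=1$ and $x_n=\alpha(n)$, and your computation in (1) gives $[L_0,\alpha(n)]=-n\alpha(n)$, not $(2-n)\alpha(n)$. For $x=\ket0$ one has $a=0$, $x_{-1}=\mathrm{id}$, and $[L_0,\mathrm{id}]=0\neq 2\,\mathrm{id}$.

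Your proposal is also inconsistent across items. In (3) you apply (2) with $n=0$, $a=1$ to get $[L_0,x_0]=0$; that is the corrected formula $a-n-1=0$. The printed version would give $[L_0,x_0]=2x_0$, so $x_0$ would not preserve the degree-one piece, and $V_{L,1}/L_{-1}V_{L,0}$ would not be closed under the bracket. So you should prove $[L_0,x_n]=(a-n-1)x_n$ and flag $1+a-n$ as a misprint.

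You also do not need the full $[L_m,x_n]$ formula propagated through the recursion. Your Leibniz argument would not deliver it as stated: it only controls $L_{-1}$, and for $m\ge1$ the $L_m$ are not derivations of the $(-1)$-product. For example, $L_1(u_{-1}v)=(L_1u)_{-1}v+(2\Delta_u-1)u_0v+u_{-1}L_1v$. Either of two shorter routes works:
\begin{itemize}
\item Take the general Borcherds commutator formula $[u_m,v_n]=\sum_{j\ge0}\binom{m}{j}(u_jv)_{m+n-j}$, of which your identity in (3) is the case $m=0$, with $u=w$ and $m=1$.
\item Count $L_0$-degrees directly. The $n$-th mode of $\alpha(-m)\ket0$ is a multiple of $\alpha(n-m+1)$, so it shifts degree by $m-n-1$. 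The exponential formula shows that $\ket\beta_n$ shifts degree by $\beta^2/2-n-1$. Every term $u_{n-k-1}v_k$ or $v_ku_{n-k-1}$ in the recursion for $(u_{-1}v)_n$ shifts degree by $\Delta_u+\Delta_v-n-1$, and $u_{-1}v$ has weight $\Delta_u+\Delta_v$. Since $(\alpha(-m)\ket0)_{-1}=\alpha(-m)$, every monomial $P\ket\beta$ is an iterated $(-1)$-product of these generators, so induction gives the claim for every $L_0$-eigenvector $x$.
\end{itemize}
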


\subsection{Lattice VOA realizations of VBKM algebras}
First, recall the classical theory. Let $A$ be a symmetric Cartan matrix with the associated Kac-Moody algebra $\g$. Let $L$ be the root lattice of $\g$ with basis of simple roots $\alpha_i$ and bilinear form given by the matrix $A$ in this basis.

\begin{theorem}
    The map
    \begin{align*}
        e_i&\mapsto \ket\alpha_i,\\
        h_i&\mapsto \alpha_i(-1)\ket 0,\\
        f_i&\mapsto \ket\alpha_i
    \end{align*}
    gives a homomorphism of Lie algebras $\g\to V_{L,1}/L_{-1}V_{L,0}$.
\end{theorem}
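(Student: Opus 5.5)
The plan is to verify the defining relations of $\tilde\g(\mc F)$ inside the Lie algebra $V_{L,1}/L_{-1}V_{L,0}$ of Theorem \ref{t:conformal_structure_properties}(3), and then to check that the Serre relations map to zero. Note that the statement as printed sends $f_i$ to $\ket{\alpha_i}$; I will read this as a typo and send $f_i$ to $\ket{-\alpha_i}$, possibly multiplied by a sign fixed by the cocycle. Since $A$ is symmetric it is in particular symmetrizable. So by Theorem \ref{t:tilde_g_facts}(iii) the largest ideal $\r$ is generated by the Serre relations, and $\g=\tilde\g(\mc F)/\r$ is presented by generators $e_i,h_i,f_i$, the relations of $\tilde\g$, and the Serre relations. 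It therefore suffices to verify these relations for the images, with the bracket $[x,y]=x_0y$.

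First, $L_0$-degrees. By Theorem \ref{t:conformal_structure_properties}(1), the degree of $P\ket\beta$ is $\deg P+\beta^2/2$. Since $a_{ii}=2$, each $\ket{\pm\alpha_i}$ and each $\alpha_i(-1)\ket0$ has degree $1$, so all images lie in $V_{L,1}$. The key observation is that a class in $V_{L,1}/L_{-1}V_{L,0}$ lying in the sector $V_{L,\beta}$ must vanish whenever $\beta^2/2>1$, because then $V_{L,\beta}$ has no degree-one part.

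Next, the Cartan relations. From the formula $(\alpha(-1)\ket0)_0=\alpha(0)$ we get
\[
[h_i,\ket{\pm\alpha_j}]=\pm(\alpha_i,\alpha_j)\ket{\pm\alpha_j}=\pm a_{ij}\ket{\pm\alpha_j}.
\]
The relation $[h_i,h_j]=0$ holds because $\alpha(0)$ kills every element of $V_{L,0}$. For $[e_i,f_j]$ with $i\ne j$, the bracket lands in the sector $\alpha_i-\alpha_j$. Its norm satisfies
\[
(\alpha_i-\alpha_j)^2/2=2-a_{ij}\ge2,
\]
since $a_{ij}\le 0$, so the bracket vanishes in degree one.

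For $i=j$, compute $\ket{\alpha}_0\ket{-\alpha}$ directly from the vertex operator formula. The term $z^{(\alpha,-\alpha)}=z^{-2}$ combines with the $z$-expansion of $\exp\left(\sum_{n>0}\alpha(-n)z^n/n\right)$, and the coefficient we want is $\pm\alpha(-1)\ket0$. This step requires care with the cocycle and sign conventions. I will absorb the sign by rescaling $f_i$ by $\pm1$, as allowed by the Remark on cocycles.

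Finally, the Serre relations. The element $(\operatorname{ad}e_i)^{1-a_{ij}}e_j$ lies in the sector $k\alpha_i+\alpha_j$ with $k=1-a_{ij}$. Its norm is
\[
(k\alpha_i+\alpha_j)^2/2=k^2+1+ka_{ij}=k+1\ge2,
\]
so it vanishes by the degree argument. The same computation in the sector $-(k\alpha_i+\alpha_j)$ handles the $f$-Serre relations. When $a_{ij}=0$ we have $k=1$, and this also covers $[e_i,e_j]=0$.

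The main obstacle I expect is not these vanishing arguments but the $i=j$ computation and consistent signs. One also needs that $[\,\cdot,\cdot\,]$ is well defined on the quotient by $L_{-1}V_{L,0}$; this is taken from Theorem \ref{t:conformal_structure_properties}(3).
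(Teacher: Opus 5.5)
Your proposal is correct and follows essentially the same route as the paper. The paper also treats the relations of $\tilde\g$ as a direct check and kills the Serre relations by noting that $(\operatorname{ad}e_i)^{1-a_{ij}}e_j$ lies in a sector whose $L_0$-eigenvalues are all at least $2-a_{ij}>1$; your correction $f_i\mapsto\ket{-\alpha_i}$ (up to a cocycle sign) and your explicit handling of $[e_i,f_j]$ and $[e_i,f_i]$ simply fill in what the paper calls straightforward.
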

\begin{proof}
    The relations in $\tilde\g(A)$ are straightforward to check. For Serre relations, let $(\alpha_i,\alpha_j)=-N$, and consider 
    $$
    x:=(\operatorname{ad}e_i)^{1+N}e_j=P\ket{(1+n)\alpha_i+\alpha_j}.
    $$
    We have
    \begin{align*}
        x=L_0x&=\left[\deg P+\frac{((1+N)\alpha_i+\alpha_j)^2}{2}\right]x\\
        &=\left[\deg P+\frac{2(1+N)^2-2N(1+N)+2}{2}\right]x=(\deg P+N+2)x.
    \end{align*}
    Since $\deg P+N+2>1$, we must have $x=0$, which checks Serre relations.
\end{proof}

We introduce the following construction. For $0\ne \tau\in L$ with $\tau^2=0$ and $a\in L_\C$, define operators
$$
 L_n^{\tau,a}:=(a(-1)\ket{n\tau})_0,\qquad n\in\Z.
$$
\begin{proposition}
    Assume that the cocycle is chosen so that $c(\tau,\tau)=1$. Then the operators $L_n^\tau$ satisfy relations
    \begin{equation}\label{eq:relations_between_pseudo_Virasoro_elements}
[ L_n^{\tau,a}, L_m^{\tau,a}]=-(a,\tau)(n-m) L_{m+n}^{\tau,a}+\delta_{n,-m}((a,\tau)^2n^3+(a,a)n)\tau(0).
    \end{equation}
\end{proposition}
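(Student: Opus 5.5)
The plan is to compute the bracket directly with the zero-mode commutator formula of the lattice VOA. For $u,v\in V_L$ the Borcherds identity gives
$$
[u_0,v_0]=(u_0v)_0 .
$$
So we apply this with $u=a(-1)\ket{n\tau}$ and $v=a(-1)\ket{m\tau}$, and reduce everything to computing the single vector $u_0v\in V_{L,(n+m)\tau}$ modulo $L_{-1}V_L$. Vectors in $L_{-1}V_L$ are harmless, since $(L_{-1}x)_0=0$. Passing to this quotient is consistent with the Lie algebra structure of Theorem \ref{t:conformal_structure_properties}(3).

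To compute $u_0v$ we expand $u$ using the normal-ordering rule for $(a_{-1}b)_n$ with $a=a(-1)\ket0$ and $b=\ket{n\tau}$:
$$
u_0=\sum_{k\ge0}a(-1-k)\,\ket{n\tau}_k+\sum_{k<0}\ket{n\tau}_k\,a(-1-k).
$$
We then act on $a(-1)\ket{m\tau}$. The key simplification comes from $\tau^2=0$ and $(n\tau,m\tau)=0$. With these, the vertex operator $\ket{n\tau}(z)$ acting on $V_{L,m\tau}$ carries no $z^{(n\tau,m\tau)}$ factor. Moreover its exponentials only involve $\tau(\pm j)$, which commute among themselves. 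On the vector $a(-1)\ket{m\tau}$ the annihilation part therefore acts only through $\tau(1)a(-1)\ket{m\tau}=(a,\tau)\ket{m\tau}$. Hence $\ket{n\tau}_k$ applied to $a(-1)\ket{m\tau}$ is an explicit finite combination of Schur-type polynomials in $n\tau(-j)$, times $\ket{(n+m)\tau}$, with only two kinds of terms. The first kind is proportional to $(a,\tau)$ and comes from one contraction. The second is the term with no contraction, in which $a(-1)$ survives.

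Similarly, $a(j)$ for $j\ge0$ acts on $\tau(-i)$ by $j\delta_{ij}(a,\tau)$ and on $a(-1)$ by $(a,a)$ when $j=1$. We collect the resulting vector in the form
$$
\bigl(\text{linear comb. of } a(-1),\ \tau(-1),\ \tau(-2),\ \tau(-1)^2,\dots\bigr)\ket{(n+m)\tau},
$$
and then rewrite it modulo $L_{-1}$. On this weight-one-type quotient we use $L_{-1}\ket{k\tau}=k\tau(-1)\ket{k\tau}$ together with $L_{-1}(x(-1)\ket{k\tau})=x(-2)\ket{k\tau}+k\,x(-1)\tau(-1)\ket{k\tau}$. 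These identities eliminate $\tau(-2)$ and the quadratic terms in favour of $a(-1)\ket{(n+m)\tau}$ and $\tau(-1)\ket{(n+m)\tau}$. The $a(-1)$-coefficient should come out as $-(a,\tau)(n-m)$, which gives the term $-(a,\tau)(n-m)L_{n+m}^{\tau,a}$.

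When $n+m\ne0$, the leftover multiples of $\tau(-1)\ket{(n+m)\tau}$ are $L_{-1}$-exact, because $\tau(-1)\ket{k\tau}=\tfrac1kL_{-1}\ket{k\tau}$. So only the Virasoro-type term survives. When $n=-m$, the target is $V_{L,0}$ and $\tau(-1)\ket0$ is no longer exact. Its zero mode is $\tau(0)$, and we must track its coefficient exactly. This is where the cubic term $(a,\tau)^2n^3$ arises, from a third-order Schur polynomial contracted twice against $a(-1)$ and once against $\tau$, together with the term $(a,a)n$ from the double pairing $a(1)a(-1)$. The hypothesis $c(\tau,\tau)=1$ is used here, so that $\ket{n\tau}$ and $\ket{m\tau}$ multiply with no sign and the signs for the terms with $n$ and with $m$ agree.

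The main obstacle is this bookkeeping step: extracting the exact coefficient of $\tau(-1)\ket{0}$ in $u_0v$ for $m=-n$, including the contributions of the third-order Schur polynomial. We first verify it for small values of $n$ (say $n=1,2$) to pin down the constants. After that we argue via generating functions, summing over $k$ in the $z$-integral, to obtain the closed form $(a,\tau)^2n^3+(a,a)n$.
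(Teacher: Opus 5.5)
Your setup is the paper's: use $[u_0,v_0]=(u_0v)_0$, expand $u_0$ by the normal-ordering rule, and discard $\tau(-1)\ket{(n+m)\tau}=\tfrac{1}{n+m}L_{-1}\ket{(n+m)\tau}$ when $n+m\neq0$. However, the proposal stops exactly at the step that constitutes the proof, and what you predict about that step is wrong.

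Since $\tau^2=0$, both $u$ and $v$ have $L_0$-weight $1$. So $u_0$ preserves weight, and $u_0v$ lies in the span of the $b(-1)\ket{(n+m)\tau}$, $b\in L_\C$. Consequently no $\tau(-2)$ or $\tau(-1)^2$ terms occur, and no Schur polynomial beyond $n\tau(-1)$ enters. Both sums in $u_0$ truncate, because $\ket{n\tau}_k a(-1)\ket{m\tau}=0$ for $k\ge1$ and $a(j)a(-1)\ket{m\tau}=0$ for $j\ge2$. The three surviving pieces are
\[
\ket{n\tau}_0a(-1)\ket{m\tau}=-n(a,\tau)\ket{(n+m)\tau},\qquad
\ket{n\tau}_{-1}a(-1)\ket{m\tau}=\bigl(a(-1)-n^2(a,\tau)\tau(-1)\bigr)\ket{(n+m)\tau},
\]
\[
\ket{n\tau}_{-2}\ket{m\tau}=n\tau(-1)\ket{(n+m)\tau},
\]
and they give
\begin{align*}
u_0v&=a(-1)\ket{n\tau}_0a(-1)\ket{m\tau}+\ket{n\tau}_{-1}a(0)a(-1)\ket{m\tau}+\ket{n\tau}_{-2}a(1)a(-1)\ket{m\tau}\\
&=(m-n)(a,\tau)\,a(-1)\ket{(n+m)\tau}+\bigl((a,a)n-(a,\tau)^2mn^2\bigr)\tau(-1)\ket{(n+m)\tau}.
\end{align*}
This is exactly the paper's computation, and the statement follows at once.

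In particular, the cubic term is not produced by a third-order Schur polynomial contracted twice against $a(-1)$. The vector $v$ contains a single $a(-1)$, which can be absorbed only once: either by $\tau(1)$ from $\ket{n\tau}(z)$, or by the explicit $a(1)$. Moreover, $\tau$--$\tau$ contractions vanish. The cubic term comes instead from the $a(0)$ term, which your list of actions leaves out. The mode $a(0)$ acts on $V_{L,m\tau}$ by $m(a,\tau)$. This eigenvalue, times the single contraction $-n(a,\tau)$, times the linear creation term $n\tau(-1)$, gives $-(a,\tau)^2mn^2=(a,\tau)^2n^3$ at $m=-n$. The same $a(0)$ term also supplies the $+m(a,\tau)$ in the $a(-1)$-coefficient.

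Likewise, $a(j)$ never acts on the $\tau(-i)$ created by $\ket{n\tau}_k$: in $\ket{n\tau}_k\,a(-1-k)$ the $a$-mode stands to the right and acts on $v$ directly. Once these three terms are written down, nothing is left for small-$n$ checks or generating functions. Without them, the proposal does not yet establish the formula.
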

\begin{proof}
    We have $(n\tau,m\tau)=0$, so
    \begin{align*}
        \ket{n\tau}_ka(-1)\ket{m\tau}&=\int z^{k}\exp\left(\sum_{s>0}\frac{n\tau(-s)}sz^{s}\right)\left(1-n\tau(1)z^{-1}\right)a(-1)\ket{m\tau}\\
        &=\begin{cases}
            0,&k\ge 1,\\
            n(\tau,a)\ket{(n+m)\tau},&k=0
        \end{cases}
    \end{align*}
    and
    $$
    \ket{n\tau}_k\ket{m\tau}=
    \begin{cases}
        0,&k\ge 0,\\
        \ket{(n+m)\tau},&k=-1,\\
        n\tau(-1)\ket{(n+m)\tau},&k=-2,
    \end{cases}
    $$
    This gives
    \begin{align*}
        (a(-1)\ket{n\tau})_0&a(-1)\ket{m\tau}=\sum_{k\ge 0}a(-1-k)\ket{n\tau}_ka(-1)\ket{m\tau}+\sum_{k\ge 0}\ket{n\tau}_{-1-k}a(k)a(-1)\ket{m\tau}\\
        &=(-n(a,\tau)a(-1)+m(a,\tau)a(-1)+(-(a,\tau)^2mn^2+(a,a)n)\tau(-1))\ket{(m+n)\tau}.
    \end{align*}
    Since 
    $$
    (m+n)\tau(-1)\ket{(m+n)\tau}=T(\ket{(m+n)\tau}),
    $$
    we have
    $$
    [ L_n^{\tau,a}, L_m^{\tau,a}]=-(a,\tau)(n-m) L_{m+n}^{\tau,a}+\delta_{n,-m}((a,\tau)^2n^3+(a,a)n)\tau(0),
    $$
    as desired.
\end{proof}

So, the elements $ L_n^{\tau,a}$ satisfy Virasoro/Heisenberg relations under certain rescalings and adding some multiple of $\tau(0)$ to $L_0$. The precise statement is the following.
\begin{corollary}\
    \begin{enumerate}[label=(\roman*)]
        \item If $(a,\tau)=0$, then $\tilde L_n^{\tau,a}$ and $\tau(0)$ span the Heisenberg algebra with central element $(a,a)\tau(0)$.
        \item If $(a,\tau)\ne 0$, then the elements 
        $$
        \tilde L_n^{\tau,a}:=
        \begin{cases}
            -L_n^{\tau,a}/(a,\tau),&n\ne 0,\\
            -L_0^{\tau,a}/(a,\tau)+(1+(a,a)/(a,\tau))\tau(0),&n=0
        \end{cases}
        $$
        and $\tau(0)$ span the Virasoro algebra with central charge $12\tau(0)$.
        \item If $(a,a)=(a,\tau)=-1$, then $L_n^{\tau,a}$ form the standard Virasoro basis with central charge $12\tau(0)$.
    \end{enumerate}
\end{corollary}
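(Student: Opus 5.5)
All three parts follow from the relation (\ref{eq:relations_between_pseudo_Virasoro_elements}) by rescaling and shifting $L_0$. First I will check that $\tau(0)$ is central in the span of the $L_n^{\tau,a}$ and $\tau(0)$. Indeed, $[\tau(0),(a(-1)\ket{n\tau})_0]=(\tau(0)(a(-1)\ket{n\tau}))_0$. The element $a(-1)\ket{n\tau}$ lies in $V_{L,n\tau}$, on which $\tau(0)$ acts by $(\tau,n\tau)=0$, so this bracket vanishes. Hence $\tau(0)$ can serve as the central element in every case, and only the bracket $[L_n,L_m]$ needs to be checked.

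For (i), set $(a,\tau)=0$ in (\ref{eq:relations_between_pseudo_Virasoro_elements}). This gives $[L_n^{\tau,a},L_m^{\tau,a}]=\delta_{n,-m}(a,a)n\,\tau(0)$. These are the relations $[x_n,x_m]=n\delta_{n,-m}c$ of $\Heis$ with $c=(a,a)\tau(0)$, and $L_0^{\tau,a}$ is central. Here I read $\tilde L_n^{\tau,a}$ in (i) as the unrescaled $L_n^{\tau,a}$ for $n\neq0$, since the rescaling in (ii) is undefined when $(a,\tau)=0$.

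For (ii), put $k:=(a,\tau)\neq0$ and $X_n:=-L_n^{\tau,a}/k$. Dividing (\ref{eq:relations_between_pseudo_Virasoro_elements}) by $k^2$ gives
$$
[X_n,X_m]=(n-m)X_{n+m}+\delta_{n,-m}\Bigl(n^3+\tfrac{(a,a)}{k^2}n\Bigr)\tau(0).
$$
This holds for all $n,m$, including those with $n+m=0$, where $X_0=-L_0^{\tau,a}/k$ appears on the right.

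For $m\neq-n$ the right-hand side already has the Virasoro form, and $\tilde L_{n+m}^{\tau,a}=X_{n+m}$ unless $n+m=0$. The only remaining issue is the linear term when $m=-n$. Write $\tilde L_0^{\tau,a}=X_0+\mu\tau(0)$. Since $\tau(0)$ is central, the bracket $[\tilde L_n,\tilde L_m]$ is unchanged, and $(n-m)X_0=2n\tilde L_0-2n\mu\tau(0)$. So for $m=-n$,
$$
[\tilde L_n,\tilde L_{-n}]=2n\tilde L_0+\Bigl(n^3+\tfrac{(a,a)}{k^2}n-2\mu n\Bigr)\tau(0).
$$
This has the required form $2n\tilde L_0+(n^3-n)\tau(0)$, i.e.\ central charge $12\tau(0)$ in the paper's convention, exactly when $2\mu=1+(a,a)/k^2$.

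The one step that needs care is this last constant. The shift must be chosen so that the linear coefficient equals $-1$. I will compute it directly from the displayed bracket as above, and then compare the result with the coefficient written in (ii). As a consistency check, the value must vanish in the situation of (iii).

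For (iii), take $(a,a)=(a,\tau)=-1$. Then $X_n=L_n^{\tau,a}$ with no rescaling, and (\ref{eq:relations_between_pseudo_Virasoro_elements}) reads $[L_n,L_m]=(n-m)L_{n+m}+\delta_{n,-m}(n^3-n)\tau(0)$ verbatim. Equivalently, $\mu=\tfrac12(1+(-1)/1)=0$. So the $L_n^{\tau,a}$ themselves form the standard Virasoro basis with central charge $12\tau(0)$, which gives (iii).
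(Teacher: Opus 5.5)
Your approach is the paper's own: the paper's proof is just ``straightforward from (\ref{eq:relations_between_pseudo_Virasoro_elements})''. Your steps (centrality of $\tau(0)$, rescaling by $-1/(a,\tau)$, shifting the zero mode) are all correct, including the value $\mu=\tfrac12\bigl(1+(a,a)/(a,\tau)^2\bigr)$.

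What you must not leave open is the comparison you announce with the coefficient printed in (ii), because it does not match. The printed shift $1+(a,a)/(a,\tau)$ fails exactly the consistency test you propose. For $(a,a)=(a,\tau)=-1$ it equals $2$. Then $\tilde L_0^{\tau,a}=L_0^{\tau,a}+2\tau(0)$, and
$[\tilde L_n^{\tau,a},\tilde L_{-n}^{\tau,a}]=2n\tilde L_0^{\tau,a}+(n^3-5n)\tau(0)$,
which is inconsistent with (iii). So the formula in (ii) is a misprint. Your argument proves (ii) with
$\tilde L_0^{\tau,a}=-L_0^{\tau,a}/(a,\tau)+\tfrac12\bigl(1+(a,a)/(a,\tau)^2\bigr)\tau(0)$,
and you should say so explicitly rather than leaving it as ``then compare''.

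Note also that neither the span of the $\tilde L_n^{\tau,a}$ and $\tau(0)$ nor the central charge depends on the shift. The central charge is read off from the $n^3$-coefficient, which is $\tau(0)$ in either case. So the literal ``span the Virasoro algebra with central charge $12\tau(0)$'' assertion of (ii) holds with either constant. The shift matters only for obtaining the standard basis, and (iii) shows that is what is intended.
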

\begin{proof}
    Straightforward from the commutativity relation (\ref{eq:relations_between_pseudo_Virasoro_elements}).
\end{proof}

Let us take two such Virasoro algebras: $\Vir^{\tau_1,a_1}$ and $\Vir^{\tau_2,a_2}$ with $(a_i,a_i)=(a_i,\tau_i)=-1)$.
\begin{proposition}\label{p:relations_between_Virasoros_in_lattice_VOAs}
    Assume that $(\tau_1,\tau_2)<0$ and the chosen cocycle satisfies $c(\tau_1,\tau_2)=1$. Then
    \begin{enumerate}[label=(\roman*)]
        \item $[{\Vir_{\ge 2}^{\tau_1,a_1},\Vir_{\le -1}^{\tau_2,a_2}}]=[{\Vir_{\ge 1}^{\tau_1,a_1},\Vir_{\le -2}^{\tau_2,a_2}}]=0$, and the symmetric condition holds
        \item If $(\tau_1,\tau_2)<-1$, then $[{\Vir_{\ge 1}^{\tau_1,a_1},\Vir_{\le -1}^{\tau_2,a_2}}]=0$ and the symmetric condition holds. In this case, $\Vir^{\tau_1,a_1}$ and $\Vir^{\tau_2,a_2}$ generate a VBKM algebra.
        \item If $(\tau_1,\tau_2)=-1$, then $[L_{1}^{\tau_1,a_1},L_{-1}^{\tau_2,a_2}]=\pm \ket{\tau_1-\tau_2}$ and $[L_{-1}^{\tau_1,a_1},L_{1}^{\tau_2,a_2}]=\pm \ket{\tau_2-\tau_1}$.
        \item If $(\tau_1,\tau_2)=-1$, then the $\sl_2$ triple $e,h,f=\ket{\tau_1-\tau_2},\tau_1(0)-\tau_2(0), \ket{\tau_2-\tau_1}$ and $\Vir^{\tau_2,a_2}$ generate a VBKM algebra containing $\Vir^{\tau_1,a_1}$. With
        $$
        (h,c,b)=(\tau_1-\tau_2,\tau_2,a_1),
        $$
        it has the Cartan matrix
        $$
        \begin{pmatrix}
            2&-1\\
            -1&0\\
            1+(a_2,\tau_1)&-1
        \end{pmatrix}.
        $$
    \end{enumerate}
\end{proposition}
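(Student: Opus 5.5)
The plan is to compute the brackets $[L_n^{\tau_1,a_1},L_m^{\tau_2,a_2}]$ directly with the same vertex-operator formulas used in the proof of the commutation relation (\ref{eq:relations_between_pseudo_Virasoro_elements}). By Theorem \ref{t:conformal_structure_properties}(iii) the bracket is $[x_0,y_0]=(x_0y)_0$. So it suffices to compute
$$
(a_1(-1)\ket{n\tau_1})_0\,a_2(-1)\ket{m\tau_2}
$$
modulo $L_{-1}V_L$. Write $N:=-(\tau_1,\tau_2)>0$. The main input is the exponent of $z$ in $\ket{n\tau_1}_k$ acting on $\ket{m\tau_2}$: it is $z^{k+(n\tau_1,m\tau_2)}=z^{k-nmN}$, multiplied by the exponentials in $\tau_1(\pm s)$. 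When $nm<0$, the product $-nmN=|nm|N$ is positive, and the output lies in $V_{L,n\tau_1+m\tau_2}$.

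For (i) and (ii), I would argue by conformal weight rather than expanding everything, as in the Serre-relation check above. The element $x_0y$ lies in $V_{L,1}$ and in the lattice component $n\tau_1+m\tau_2$. By Theorem \ref{t:conformal_structure_properties}(i), every vector there has $L_0$-eigenvalue
$$
\deg P+\tfrac12(n\tau_1+m\tau_2)^2=\deg P-nmN=\deg P+|nm|N.
$$
A nonzero weight-one vector would need $|nm|N\le 1$. This forces $|n|=|m|=1$ and $N=1$. In all other cases the bracket vanishes, which gives (i) and the first claim of (ii). The second claim of (ii) then follows by checking the defining relations of $\tilde\g(\mc F)$ for the datum with $J=\{1,2\}$ and $I=\emptyset$. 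The mutual relations $[L_n^j,L_m^k]=0$ for $nm<0$ are exactly what was just proven. Each $\Vir^{\tau_i,a_i}$ is a standard Virasoro algebra by part (iii) of the Corollary. The datum is $c_j=\tau_j(0)$, $b_j=L_0^{\tau_j,a_j}$, $\gamma_j$ = degree. For the condition $\gamma_1(c_2)=0\iff\gamma_2(c_1)=0$, both sides are governed by $(\tau_1,\tau_2)$, which is symmetric.

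For (iii), I take $n=1$, $m=-1$, so $|nm|N=1$. The weight-one space of the component $\tau_1-\tau_2$ is spanned by $\ket{\tau_1-\tau_2}$ alone, since $\deg P=0$ is forced. So it remains to show that the coefficient is $\pm1$. For this I use the formula for $\ket{\tau_1}_k$ applied to $a_2(-1)\ket{-\tau_2}$, where the relevant power is $z^{k+N}=z^{k+1}$, together with the normal-ordered expansion $(a_{-1}b)_0=\sum_{k\ge0}a_{-k-1}b_k+\sum_{k<0}b_ka_{-k-1}$. Only the terms with $\deg P=0$ survive. They contribute a scalar built from $(a_1,\tau_2)$, $(a_2,\tau_1)$, $(a_1,a_2)$ and $(\tau_1,\tau_2)$, and the cocycle contributes a sign. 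Showing that this scalar equals $\pm1$, rather than some other combination, is the main computational obstacle. I would first try to isolate the coefficient by pairing with $\ket{\tau_2-\tau_1}$ and using $(\tau_i,\tau_i)=0$ and $(a_i,\tau_i)=-1$. If the scalar does not simplify to $\pm1$, the statement presumably means up to a nonzero scalar, absorbed into the normalization of $e$. The case $(-1,1)$ is symmetric.

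For (iv), I set $e=\ket{\tau_1-\tau_2}$ and $f=\ket{\tau_2-\tau_1}$, which have norm $2$, and $h=(\tau_1-\tau_2)(0)$. The classical lattice computation gives an $\sl_2$-triple. Then I read off the Cartan datum by computing pairings:
$$
\alpha(h)=(\tau_1-\tau_2)^2=2,\qquad \gamma(h)=(\tau_2,\tau_1-\tau_2)=-1,
$$
$$
\alpha(c)=(\tau_1-\tau_2,\tau_2)=-1,\qquad \gamma(c)=0,
$$
and $\alpha(b),\gamma(b)$ come from $a_1$: $\gamma(b)=(\tau_2,a_1)$ shifted by the normalization in the Corollary, and $\alpha(b)=(\tau_1-\tau_2,a_1)$. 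Finally, I verify the VBKM relations. The relation $[e,L_{-n}^{\tau_2,a_2}]=0$ for $n>0$ again follows from the conformal-weight bound, now with $(\tau_1-\tau_2,-n\tau_2)=n$. Containment of $\Vir^{\tau_1,a_1}$ is shown by writing $L_{\pm1}^{\tau_1,a_1}$ as brackets of $e$ or $f$ with $L_{\pm1}^{\tau_2,a_2}$, using (iii), and then generating all other $L_n^{\tau_1,a_1}$ from these.
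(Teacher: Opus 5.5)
\textbf{(i) and the vanishing half of (ii).} Your weight argument is correct and cleaner than the paper's term-by-term expansion of the normal-ordered product. The vector $x_0y$ has weight one and lies in $V_{L,n\tau_1+m\tau_2}$, whose lowest $L_0$-weight is $|nm|N$; this is the same device the paper uses for its Serre relations. (For the VBKM claims, checking the defining relations only exhibits the generated algebra as a quotient of $\tilde\g(\mc F)$, but the paper does no more.)

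\textbf{The gap is in (iii).} You never compute the scalar, and your fallback ``up to a nonzero scalar'' does not hold either; your suspicion is justified. Take $x=a_1(-1)\ket{\tau_1}$ and $y=a_2(-1)\ket{-\tau_2}$. The series $Y(\ket{\tau_1},z)y$ carries the factor $z^{(\tau_1,-\tau_2)}=z$, so it has only nonnegative powers of $z$. Hence $\sum_{k\ge0}a_1(-1-k)\ket{\tau_1}_ky=0$, and in $\sum_{k\ge0}\ket{\tau_1}_{-1-k}a_1(k)y$ only $k=0$ and $k=1$ contribute. This gives
\[
[L_1^{\tau_1,a_1},L_{-1}^{\tau_2,a_2}]=\pm\bigl((a_1,a_2)+(a_1,\tau_2)(a_2,\tau_1)\bigr)\ket{\tau_1-\tau_2}.
\]
Write $a_i=u_i+w_i$ with $u_i\in\C\tau_1+\C\tau_2$ and $w_i\perp\tau_1,\tau_2$. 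The constraints $(a_i,a_i)=(a_i,\tau_i)=-1$ reduce the scalar to $-1+(w_1,w_2)$. This is $\pm1$ only when $(w_1,w_2)\in\{0,2\}$. It vanishes, for example, for $a_1=a_2=\tau_1+\tau_2+w$ with $w=\beta/\sqrt2$, where $\beta$ is any norm-$2$ lattice vector orthogonal to $\tau_1,\tau_2$. So (iii) as stated needs an extra hypothesis. The paper's proof reaches $\pm(a_1,a_2)\ket{\tau_1-\tau_2}$ only by omitting the $k=0$ term $\ket{\tau_1}_{-1}a_1(0)a_2(-1)\ket{-\tau_2}$. The claim does hold in the rank-two example that follows the proposition, where $w_1=w_2=0$.

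\textbf{Part (iv) inherits the problem.} First, the datum must use $b=L_0^{\tau_2,a_2}=a_2(0)$; the ``$b=a_1$'' in the statement is a slip. Then $\gamma(b)=(\tau_2,a_2)=-1$ and $\alpha(b)=(\tau_1-\tau_2,a_2)=1+(a_2,\tau_1)$, which is exactly the third row, with no normalization shift. Your $\gamma(b)=(\tau_2,a_1)$ violates $\gamma_j(b_j)=-1$.

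More seriously, containment of $\Vir^{\tau_1,a_1}$ does not follow from (iii). A direct computation gives $[e,L_1^{\tau_2,a_2}]=\pm L_1^{\tau_1,s(a_2)}$, where $s$ is the reflection in $\tau_1-\tau_2$. This uses that for $n\neq0$, $L_n^{\tau,a}$ depends only on $a$ modulo $\C\tau$, since $(\tau(-1)\ket{n\tau})_0=\frac1n(L_{-1}\ket{n\tau})_0=0$. The generated algebra is a quotient of $\tilde\g(\mc F)$, whose root space at $\tau_1=(\tau_1-\tau_2)+\tau_2$ is spanned by $[e,L_1^{\tau_2,a_2}]$. So it contains $\Vir^{\tau_1,a_1}$ only if $a_1\equiv s(a_2)\pmod{\C\tau_1}$, i.e.\ $w_1=w_2$, and you would have to add this as a hypothesis. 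Under it, the Jacobi identity together with $[e,L_{-1}^{\tau_2,a_2}]=0$ and $[L_1^{\tau_2,a_2},L_{-1}^{\tau_2,a_2}]=2a_2(0)$ yields
\[
[L_1^{\tau_1,a_1},L_{-1}^{\tau_2,a_2}]=\pm2\bigl(1+(a_2,\tau_1)\bigr)e.
\]
This is consistent with the Cartan matrix entry $\alpha(b)$, and again it is not $\pm e$ in general.
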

\begin{proof}
    Let $n\ge 0$ and $m\le 0$. We have $(n\tau_1,m\tau_2)\ge -mn$, so
    \begin{align*}
        \ket{n\tau_1}_ka_2(-1)\ket{m\tau_2}&=\int z^{k+mn(\tau_1,\tau_2)}\exp\left(\sum_{s>0}\frac{n\tau_1(-s)}sz^{s}\right)\left(1-n\tau_1(1)z^{-1}\right)a_2(-1)\ket{m\tau_2}\\
        &=\begin{cases}
            0,&k+mn(\tau_1,\tau_2)\ge 1,\\
             n(\tau_1,a_2)\ket{n\tau_1+m\tau_2},&k+mn(\tau_1,\tau_2)=0.
        \end{cases}
    \end{align*}
    Similarly,
    $$
    \ket{n\tau_1}_k\ket{m\tau_2}=\begin{cases}
        0,&k+mn(\tau_1,\tau_2)\ge 0,\\
        \ket{m\tau_1+n\tau_2},&k+mn(\tau_1,\tau_2)=-1.
    \end{cases}
    $$
    
    Using this and that $k+mn(\tau_1,\tau_2)\ge k+1$ with our assumptions, we get
    \begin{align*}
        (a_1(-1)\ket{\tau_1})_0a_2(-1)\ket{\tau_2}&=\sum_{k\ge 0}a_1(-1-k)\ket{n\tau_1}_ka_2(-1)\ket{m\tau_2}+\sum_{k\ge 0}\ket{n\tau_1}_{-1-k}a_1(k)a_2(-1)\ket{m\tau_2}\\
        &=0\pm(a_1,a_2)\ket{n\tau_1}_{-2}\ket{m\tau_2},
    \end{align*}
    which is zero if $mn\le -2$ and $\pm \ket{m\tau_1+n\tau_2}$ if $n=-m=1$ and $(\tau_1,\tau_2)=-1$. This proves the entire Proposition.
\end{proof}

\begin{example}
    Let $L$ be a lattice with basis $c,d$ with Gram matrix
    $$
    \mat0{-1}{-1}0.
    $$
    We have two Virasoro algebras inside: $\Vir^c$ and $\Vir^d$ given by $(d(-1)+\tfrac12c(-1))\ket{nc}$ and $(c(-1)+\tfrac12d(-1))\ket{nd}$, respectively. This is the case $(iii)-(iv)$ of Proposition \ref{p:relations_between_Virasoros_in_lattice_VOAs}. The Cartan matrix is
    $$
    \begin{pmatrix}
            2&-1\\
            -1&0\\
            1/2&-1
    \end{pmatrix}.
    $$
\end{example}

\begin{example}\label{e:special_VBKM_realization}
    In this example, we construct a quotient of a VBKM algebra satisfying relations from Proposition \ref{p:new_ideal_in_special_VBKM}.

    Let $L$ be an even sublattice of the vector space with basis $c,d,a,b$ with Gram matrix
    $$
    \begin{pmatrix}
        0&-2&-1&0\\
        -2&0&0&-1\\
        -1&0&-1&0\\
        0&-1&0&-1
    \end{pmatrix}.
    $$
    For example, we can set $L:=\lge2a,2b,c,d\rge$. Take $\Vir^{a,c}$ and $\Vir^{b,d}$. We compute $[L^{a,c}_{n},L^{b,d}_{1}]$ for $n>0$:
    \begin{align*}
        (a(-1)\ket{nc})_0b(-1)\ket{d}&=\sum_{k\ge 0}a(-1-k)\ket{nc}_kb(-1)\ket{d}+\sum_{k\ge 0}\ket{nc}_{-1-k}a(k)b(-1)\ket{d}.
    \end{align*}
    Since $(a,b)=(a,d)=0$, we have $a(k)b(-1)\ket{d}=0$ for all $k\ge 0$. Since $(nc,d)\le -2$ and $(b,c)=0$, we have
    $$
    \ket{nc}_kb(-1)\ket{d}=
    \begin{cases}
        (b,c)\ket{c+d}=0,&n=1,k=0,\\
        0,&k>0.
    \end{cases}
    $$
    Thus, $[L^{a,c}_{n},L^{b,d}_{1}]=0$ for all $n\ge 0$, showing that the desired relations are satisfied.

    Notice that if we did not assume $(a,b)=0$, we would get
    \begin{align*}
    [L^{a,c}_{1},L^{b,d}_{1}]&=\ket{c}_{-2}a(1)b(-1)\ket{d}\\
    &=(a,b)\ket{c}_{-2}\ket d\\
    &=\frac16(a,b)\left[c(-1)^3+3c(-1)c(-2)+2c(-3)\right]\ket{c+d}\ne 0.
    \end{align*}
    So, there is a non-trivial ideal in the lattice VOA realization if $(a,b)\ne 0$.
\end{example}

Now, we give examples of situations when the assumption $\gamma_j(h_i)=0$ iff $\beta_j(h_i)=0$ or $\gamma_j(c_k)=0$ iff $\beta_j(c_k)=0$ fails.
\begin{antiexample}
    Let $L$ be a lattice with basis $\alpha,c,d$ with Gram matrix
    $$
    \begin{pmatrix}
            2&0&0\\
            0&0&-1\\
            0&-1&0
    \end{pmatrix}.
    $$
    Let us consider $\Vir^c$ given by $(d(-1)+\tfrac12c(-1))\ket{nc}$ and $\operatorname{Heis}^c$ given by $\alpha(-1)\ket{nc}$. We have $\gamma(c)=(c,c)=0$ but $b(c)=(d+\tfrac12c,c)=-1$. Let us compute the commutators:
    \begin{align*}
        (\alpha(-1)\ket{kc})_0(d(-1)+\tfrac12c(-1))\ket{nc}=-k\alpha(-1)\ket{(k+n)c},
    \end{align*}
    which shows that the algebra generated by $\Vir^c$ and $\operatorname{Heis}^c$ is isomorphic to $\Vir^c\ltimes\operatorname{Heis}^c$ studied before.
\end{antiexample}
\begin{antiexample}
    Let $L$ be the lattice from the previous example.
    Let us consider $\Vir^c$ given by $(d(-1)+\tfrac12c(-1))\ket{nc}$ and $\sl_2$ given by $\ket{c-\alpha}, c-\alpha,\ket{\alpha-c}$. The root of this $\sl_2$ is orthogonal to the central charge but not to $L_0$. Let us compute the commutators:
    \begin{align*}
        ((d(-1)+\tfrac12c(-1))\ket{nc})_0\ket{c-\alpha}&=\pm\ket{(n+1)c-\alpha},\\
        ((d(-1)+\tfrac12c(-1))\ket{nc})_0\ket{\alpha-c}&=\pm\ket{(n-1)c-\alpha},
    \end{align*}
    where the $\pm$ signs come from the cocycle $c(-,-)$ chosen for the lattice VOA and do not matter for us much.
    
    We can see that we get the root system of $\widehat\sl_2$, the affine $\sl_2$. Therefore, the algebra generated by $\Vir^c$ and $\sl_2$ equals $\Vir^c\ltimes \widehat\sl_2$. If we think of $\widetilde\sl_2$ as a central extension of the algebra $\operatorname{Maps}(\C[t,t^{-1}],\sl_2)$, then the Virasoro action comes from the Witt algebra action on $\C[t,t^{-1}]$ by derivations.
\end{antiexample}

\subsection{Fake Monster Lie algebra}
We recall main facts about the fake monster Lie algebra introduced by Borcherds in \cite{Bor90}. We follow the exposition in \cite[Section 1.4]{Nie02}.

Let $V$ be the lattice VOA associated to the even unimodular Lorentzian lattice $II_{25,1}$ of rank $26$. There is a unique such lattice, and it can be represented as $\Lambda\oplus II_{1,1}$, where $\Lambda$ is the Leech lattice and $II_{1,1}$ is the unique even unimodular Lorentzian lattice of rank $2$. Let $\rho,\sigma$ be a basis of $II_{1,1}$ with Gram matrix
$$
\mat{0}{-1}{-1}{0}.
$$
We write an element $\lambda+m\sigma+n\rho\in II_{25,1}$ for $\lambda\in \Lambda$ as $(\lambda,m,n)$.
\begin{definition}
    The {\bfseries space $P_k$ of  physical (or primary) states of degree $k$} is defined as
    $$
    P_k:=\{v\in V_L:\forall_{n>0}\,L_nv=0, L_0v=kv\}.
    $$
    The {\bfseries Lie algebra of physical states} is $P_1/L_{-1}P_0$, where $L_{-1}$ is the Virasoro mode from Theorem \ref{t:conformal_structure_properties}.
\end{definition}

We need one more piece of notation: for a positive integer $k$, denote by $p^{(k)}(n)$ the number of $k$-colored partitions of $n$. In other words, $p^{(1)}(n)$ is the number of partitions of $n$ and the generating functions are related by
$$
\sum_{n}p^{(k)}(n)q^n=\left(\sum_{n}p^{(1)}(n)q^n\right)^k.
$$
\begin{theorem}\,\label{t:fake_monster_properties}
    \begin{enumerate}[label=(\roman*)]
        \item The zero mode multiplication $[a,b]:=a_0b$ in $V$ descends to $P_1/L_{-1}P_0$ making it a Lie algebra.
        \item There is an invariant bilinear form $(\cdot,\cdot)$ on $P_1/L_{-1}P_0$. We denote the quotient of $P_1/L_{-1}P_0$ by its kernel by $\g$, which is a Lie algebra.
        \item $\g$ is a BKM Lie algebra with
        \begin{align*}
            I_{>0}&=\{(\lambda,1,1-\lambda^2/2):\lambda\in\Lambda\},\\
            I_0&=24\times \{n\rho:n\in\Z_{>0}\},\\
            I_{<0}&=\varnothing,
        \end{align*}
        where $24\times \{n\rho:n\in\Z_{>0}\}$ means that each simple root $n\rho$ appears $24$ times in the datum. The Cartan matrix is the Gram matrix of this system.
        \item For any $\alpha\in II_{25,1}$, dimension of the corresponding root space is
        $$
        \dim \g_\alpha=p^{(24)}(1-\alpha^2/2).
        $$
    \end{enumerate}
\end{theorem}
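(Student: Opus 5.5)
This is Borcherds' theorem on the fake monster Lie algebra, quoted from the literature (\cite{Bor90}, in the exposition of \cite[Section 1.4]{Nie02}). I would not reprove it from scratch. I would assemble the standard argument in four steps and cite the heavy inputs.

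\emph{Step 1: the Lie bracket, part (i).} Write $[a,b]:=a_0b$. By Theorem \ref{t:conformal_structure_properties}(ii), $a_0$ commutes with $L_0$ for $a\in P_1$. By the commutator formula $[L_m,a_0]=\sum_k\binom{m+1}{k}(L_{k-1}a)_{m+1-k}$, which reduces to $0$ for primary $a$ of weight one, $a_0$ also commutes with every $L_m$. Hence $a_0$ preserves $P_1$. Moreover $(L_{-1}x)_0=0$ and $a_0L_{-1}=L_{-1}a_0$, so $L_{-1}P_0$ is an ideal. Skew-symmetry and the Jacobi identity then follow from Theorem \ref{t:conformal_structure_properties}(iii) restricted to this subquotient.

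\emph{Step 2: the invariant form, part (ii).} Use the contravariant form on $V_L$ with $L_n^\dagger=L_{-n}$. Restricted to $P_1$, it has $L_{-1}P_0$ in its kernel, because $(L_{-1}x,y)=(x,L_1y)=0$. Invariance under zero modes of weight-one primaries is the standard VOA fact. Quotienting by the kernel gives $\g$.

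\emph{Step 3: root spaces, part (iv).} This is the no-ghost theorem (Goddard--Thorn). For $0\ne\alpha\in II_{25,1}$, the graded piece $P_1\cap V_{L,\alpha}$ modulo null vectors is isomorphic to the transverse Fock space in $24$ directions at level $1-\alpha^2/2$, which has dimension $p^{(24)}(1-\alpha^2/2)$. The proof uses the light-cone decomposition with respect to a null vector $k$ satisfying $(k,\alpha)\ne0$, and the Virasoro action at central charge $26$. I expect this to be the main obstacle and would cite it rather than redo it. For $\alpha=0$ the same theorem gives $\h=L_\C$.

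\emph{Step 4: the BKM structure and simple roots, part (iii).} Apply Borcherds' characterization: a Lie algebra with a grading by a lattice, finite-dimensional graded pieces, an almost positive definite contravariant form and a suitable Cartan involution is a BKM algebra. Choose the Weyl vector $\rho$ with $(\rho,\rho)=0$. The real simple roots are the norm $2$ vectors $r$ with $(r,\rho)=-1$, which are exactly $(\lambda,1,1-\lambda^2/2)$. The imaginary simple roots have multiplicities determined by comparing the denominator identity with the product $\prod_{\alpha>0}(1-e^\alpha)^{p^{(24)}(1-\alpha^2/2)}$ from part (iv). That comparison is Borcherds' identification of this product with a modular object, namely the theta correspondence applied to $\Delta^{-1}=1/\eta^{24}$. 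It shows that $n\rho$ is simple with multiplicity $p^{(24)}(1)=24$ and that no other imaginary simple roots occur. This identification is the second nontrivial input, and it too would be cited from \cite{Bor90}.
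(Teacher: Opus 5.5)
Your proposal follows the same route as the paper, which gives no argument of its own and simply cites Borcherds \cite{Bor90} (Theorem 3), with \cite{F85} for part (iv). Your four steps are a faithful unpacking of that citation.

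There are a few small corrections to the details.

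With the Gram matrix $(\rho,\sigma)=-1$, a norm-$2$ vector $(\lambda,1,n)$ must have $n=\lambda^2/2-1$. So the Leech roots are $(\lambda,1,\lambda^2/2-1)$, not $(\lambda,1,1-\lambda^2/2)$. The statement carries the same sign slip; for example, $(0,1,1)$ has norm $-2$.

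The identification of $\prod_{\alpha>0}(1-e^\alpha)^{p^{(24)}(1-\alpha^2/2)}$ with the theta lift of $1/\eta^{24}$ is Borcherds' later work \cite{Bor95}, not \cite{Bor90}.

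You are right to treat $\alpha=0$ separately in (iv), since $\dim\g_0=26\neq p^{(24)}(1)$. However, that case is a direct computation in $V_{L,0}$. It is not an instance of the no-ghost theorem, which needs a null vector $k$ with $(k,\alpha)\neq 0$.
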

\begin{proof}
    See \cite{Bor90}, Theorem 3 in particular. The first proof of $(iv)$ appeared in \cite{F85}.
\end{proof}

Clearly, the simple roots in $I_0$ are represented by $\tau(-1)\ket{n\rho}$, where $\tau\in \rho^\vee/\rho$, and the simple roots in $I$ are represented by $\ket\alpha$, where $\alpha=(\lambda,1,1-\lambda^2/2)$.
\begin{proposition}
    The epimorphism
    $$
    P_1/L_{-1}P_0\twoheadrightarrow\g
    $$
    splits. In other words, the elements $\tau(-1)\ket{\pm n\rho}$, $\ket{\pm\alpha}$, $n\rho(-1)$ and $\alpha(-1)$ already satisfy all the relations between $e_i,f_i,h_i$ of the BKM algebra $\g$.

    In particular, we can write $P_1/L_{-1}P_0=\g\ltimes \ker(\cdot,\cdot)$, where $(\cdot,\cdot)$ is the bilinear form described in Theorem \ref{t:fake_monster_properties}(ii).
\end{proposition}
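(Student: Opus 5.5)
The plan is to use the universal property of the BKM algebra $\g$. First I would define a homomorphism $\phi:\tilde\g(\mc F)\to P_1/L_{-1}P_0$ from the BKM datum of Theorem \ref{t:fake_monster_properties}(iii). Then I would show that it kills the largest ideal $\r$, so it descends to $s:\g\to P_1/L_{-1}P_0$. Finally I would check that composing $s$ with the quotient map gives the identity on $\g$. Concretely, $\phi$ sends $e_\alpha\mapsto\ket\alpha$, $f_\alpha\mapsto\ket{-\alpha}$ and $h_\alpha\mapsto\alpha(-1)\ket0$ for $\alpha\in I_{>0}$. For the isotropic roots it sends $e_{n\rho,\tau}\mapsto\tau(-1)\ket{n\rho}$, $f_{n\rho,\tau}\mapsto\tau(-1)\ket{-n\rho}$ and $h_{n\rho}\mapsto n\rho(-1)\ket0$, where $\tau$ ranges over an orthonormal basis of $\rho^\perp/\rho\simeq\Lambda_\C$. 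The cocycle signs are absorbed by rescaling, as in the Remark on cocycles.

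Step one is to verify the defining relations of $\tilde\g(\mc F)$. The relations $[h,e]=\alpha(h)e$ follow from $(\beta(-1)\ket0)_0=\beta(0)$. For $[e_i,f_j]=\delta_{ij}h_i$ I would use the zero-mode formulas already computed in the proof of Proposition \ref{p:relations_between_Virasoros_in_lattice_VOAs}.
\begin{itemize}
\item For two real simple roots $\alpha\ne\beta$, we have $(\alpha,-\beta)\ge0$, so $\ket\alpha_0\ket{-\beta}$ has vanishing $z$-coefficient.
\item For a real root against an isotropic one, $(\alpha,\rho)=-1$ gives $\ket{\alpha}_0\tau(-1)\ket{-n\rho}=\pm n(\tau,\alpha)\ket{\alpha-n\rho}\cdot[\text{coefficient}]$. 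This vanishes because $(\alpha,-n\rho)=n\ge 0$, so no $z^{-1}$ term appears.
\item For isotropic versus isotropic roots, the computation in the proof of (\ref{eq:relations_between_pseudo_Virasoro_elements}) with $a=\tau$, where $(\tau,\rho)=0$ and $(\tau,\tau')=\delta$, gives exactly Heisenberg relations. These read $[\tau(-1)\ket{n\rho},\tau'(-1)\ket{-m\rho}]=\delta_{nm}\delta_{\tau\tau'}\,n\rho(0)$, after discarding $L_{-1}$-exact terms.
\end{itemize}
Since the physical state conditions hold for all these vectors, $\phi$ indeed lands in $P_1/L_{-1}P_0$.

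Step two is the Serre relations. Since the datum is symmetric and $\r$ is generated by Serre relations (Theorem \ref{t:tilde_g_facts}(iii)), it suffices to check that $(\operatorname{ad}e_\alpha)^{1-a_{\alpha\beta}}e_\beta\mapsto0$ and that $[e_i,e_j]\mapsto0$ when $a_{ij}=0$. For real roots I would use the $L_0$-degree argument from the theorem embedding Kac–Moody algebras into lattice VOAs. The element lies in $V_{L,\gamma}$ with $\gamma^2/2>1$ for the relevant $\gamma$, so it would need negative $\deg P$ to have $L_0$-eigenvalue $1$, and hence it vanishes. The same argument applies to $(\operatorname{ad}e_\alpha)^{1+n}(\tau(-1)\ket{n\rho})$: here $\gamma=(1+n)\alpha+n\rho$ and $\gamma^2/2=(1+n)^2-n(1+n)>1$ when $n\ge1$. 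The commuting relations $[e_i,e_j]=0$ with $a_{ij}=0$ occur only among the isotropic roots, and they follow from the Heisenberg computation above.

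Step three is the splitting itself. The composite $\g\to P_1/L_{-1}P_0\to\g$ fixes the generators, so it is the identity. Hence $s$ is injective and the epimorphism splits. The kernel of the form is an ideal, which gives the semidirect product decomposition $P_1/L_{-1}P_0=s(\g)\ltimes\ker(\cdot,\cdot)$.

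I expect the main obstacle to be the careful treatment of the mixed real/isotropic zero-mode computations and the bookkeeping of $L_{-1}$-exact terms. Everything has to hold in the quotient by $L_{-1}P_0$, and the Heisenberg relation needs the central term to come out exactly as $n\rho(0)$, matching $h_{n\rho}=n\rho(-1)\ket0$.
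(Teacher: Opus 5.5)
Your proposal is correct and follows essentially the paper's route: you check the $\tilde\g$ relations and the Serre relations on the lattice-VOA vectors, with the Serre relations handled by the same bound $L_0\ge\gamma^2/2>1$ on $V_{L,\gamma}$. The differences are minor: you kill $[e_i,f_j]$ for $i\ne j$ by counting pole orders where the paper uses the $L_0$-weight bound, and you make explicit the $L_{-1}P_0$-exactness of the isotropic brackets and the section argument, which the paper leaves implicit. One small inaccuracy: orthogonal pairs of real simple roots do exist (Leech vectors $\lambda-\mu$ of norm $4$), so $a_{ij}=0$ is not confined to isotropic roots; these pairs are covered by your first family of Serre relations with exponent $1$, where $\gamma^2/2=2$, so nothing breaks.
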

\begin{proof}
    We have already had computations showing that Heisenberg and $\sl_2$ relations are satisfied.

    We now show the commutativities between positive and negative parts. Let $\alpha,\beta$ be two simple roots in $I_{>0}$. Then the Lie bracket $[\ket\alpha,\ket{-\beta}]$ lies in the root space $(P_1/L_{-1}P_0)_{\alpha-\beta}$. However,
    $$
    (\alpha-\beta)^2=\alpha^2+\beta^2-2(\alpha,\beta)\ge 4,
    $$
    which implies that $L_0$ acts on $(P_1/L_{-1}P_0)_{\alpha-\beta}$ by at least $2$. This is impossible since it is assumed to act by the scalar $1$. Thus, $[\ket\alpha,\ket{-\beta}]=0$.

    Similarly, $[\tau(-1)\ket{n\rho},\ket{-\beta}]$ lands into the $n\rho-\beta$ root space with
    $$
    (n\rho-\beta)^2=-n(\rho,\beta)+2=2+n>2,
    $$
    where the last equality follows from the form of $\beta$ and $\rho$. This shows that $[\tau(-1)\ket{n\rho},\ket{-\beta}]=0$, as desired.

    Now, we check the Serre relations, and the reason why they hold will be the same as before. Choose $\alpha,\beta\in I_{>0}$ and let $m:=-(\alpha,\beta)\ge 0$. Then
    $$
    (\alpha+(m+1)\beta)^2=2+2(m+1)^2-2m(m+1)=4+2m>2,
    $$
    showing the Serre relation between these two roots. Similarly,
    $$
    (n\rho+(n+1)\alpha)^2=2(n+1)^2-2n(n+1)=2+2n>2,
    $$
    showing the Serre relation between roots $\tau(-1)\ket{n\rho}$ and $\ket\alpha$. So, we are done.
\end{proof}
\begin{remark}
    One can show that 
    $$
    \dim (P_1/L_{-1}P_0)_\alpha=p^{(25)}(1-\alpha^2/2)-p^{(25)}(-\alpha^2/2)
    $$
    using the DDF construction (see \cite[Formula 3.50]{GN94}). So, the fake monster algebra is a relatively small subalgebra of $P_1/L_{-1}P_0$. 
\end{remark}

Now, notice that we have $24$ Heisenberg algebras with the same root spaces among the generators of $\g$. Therefore, we can deform them into Virasoro algebras and get VBKM algebras. Choose $T:=\{\tau_1,\ldots,\tau_{24}\}\subset II_{25,1}\otimes\C$ with $\tau_i^2=-1$, $(\tau_i,\rho)=-1$ for any $i$, and $\tau_i$ linearly independent. Let $\g_T$ be the Lie subalgebra of $V_1/TV_0$ generated by $\ket{\pm\alpha}$ for $\alpha\in I_{>0}$ and $\tau_i(-1)\ket{n\rho}$ for all $i=1\ldots24$ and $n\in\Z$. Note that the family of Lie subalgebras associated to $T(s,t):=\{s\tau_i+t\tau_i':i=1\ldots24\}$ connects these algebras and limits to the fake monster Lie algebra as $s\to 0$.

We get an immediate corollary of Theorems \ref{t:characters_of_g_in_families} and \ref{t:fake_monster_properties}.
\begin{theorem}
    For any $\alpha\in II_{25,1}$,  have an inequality
    $$
    p^{(24)}(1-\alpha^2/2)\ge \dim\g_{T,\alpha},
    $$
    with an equality satisfied for generic $T$.
\end{theorem}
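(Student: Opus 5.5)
The plan is to realize $\g_T$ as a fiber of the $\PP^1$-family of Section \ref{s:families_of_Vir-Heis_algebras} and then read the statement off Theorem \ref{t:characters_of_g_in_families}, with Theorem \ref{t:fake_monster_properties}(iv) supplying the dimensions at the special fiber.

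\textbf{Step 1: identify the Cartan datum of the special fiber.} Start with the fake monster Lie algebra $\g$ and its BKM Cartan datum from Theorem \ref{t:fake_monster_properties}(iii). Regroup the isotropic simple roots into $24$ infinite Heisenberg algebras $\Heis^i$, $i=1,\ldots,24$. Each $\Heis^i$ has root $\gamma_i=\rho$ and central element a multiple of $\rho(0)$. In the notation of Section \ref{s:ultimate_algebra}, $\g$ is then exactly $\g_\Heis$ for an enhanced datum $\mc F_T$ with $J=\{1,\ldots,24\}$, $\gamma_j=\rho$, $c_j=\rho$, and $b_j=\tau_j$. The condition $\gamma_j(b_j)=-1$ is the hypothesis $(\tau_j,\rho)=-1$.

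\textbf{Step 2: identify the generic fiber.} By the Corollary on the operators $L_n^{\tau,a}$, specialized to the case $(a,a)=(a,\tau)=-1$, the elements $\tau_j(-1)\ket{n\rho}$ form a Virasoro algebra $\Vir^j$ with central charge $12\rho(0)$. One must check that the $\Vir^j$ for distinct $j$ commute in opposite degrees. Since $(\rho,\rho)=0$, the computation in Proposition \ref{p:relations_between_Virasoros_in_lattice_VOAs} collapses; the only surviving term is proportional to $(\tau_j,\tau_k)\ket{n\rho}_{-2}\ket{m\rho}$. If this fails, I would instead only track the bracket modulo $TV_0$, which is why $\g_T$ lives in $V_1/TV_0$.

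Next, check that the brackets of $\ket{\pm\alpha}$ with $\tau_j(-1)\ket{n\rho}$ obey the defining relations of $\tilde\g(\mc F_T)$. The argument is the same $L_0$-degree count used for the fake monster splitting. For instance, $(n\rho-\beta)^2>2$ forces $[\tau_j(-1)\ket{n\rho},\ket{-\beta}]=0$, and the same count gives the Serre relations $(\operatorname{ad}e_\alpha)^k L_n^j$.

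The Cartan datum is symmetrizable because all pairings come from the symmetric form on $II_{25,1}$: we have $\alpha_i(c_j)=(\alpha_i,\rho)=\gamma_j(h_i)$ and $\gamma_k(c_j)=0$. The remaining axioms of Definition \ref{def:enh_gen_Cartan_datum} hold for generic $T$, since $(\alpha,\rho)=0$ should force $(\alpha,\tau_j)=0$ only on a non-generic locus. Hence $\g_T$ is a quotient of the VBKM algebra $\g_\Vir(\mc F_T)$, and the family $T(s,t)$ is precisely the family $sL_n^j+tx_n^j$ of Section \ref{s:families_of_Vir-Heis_algebras}.

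\textbf{Step 3: conclude.} By Theorem \ref{t:characters_of_g_in_families},
\[
\dim\g_{T,\alpha}\le\dim\g_\Vir(\mc F_T)_\alpha=\dim\g_{\Heis,\alpha}=p^{(24)}(1-\alpha^2/2),
\]
where the last equality is Theorem \ref{t:fake_monster_properties}(iv). This gives the inequality.

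For equality, I would use semicontinuity in the family. Over the special fiber $s=0$, the image in the lattice VOA is the fake monster algebra itself, and its root spaces have the full dimension. The dimension of the image of a fixed finite-rank root bundle under the evaluation map is lower semicontinuous. Therefore equality holds on a nonempty Zariski-open set of parameters, which is the generic-$T$ statement.

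The main obstacle is Step 2, namely verifying that the lattice VOA realization satisfies all the VBKM relations. The delicate cases are the cross-commutation between distinct $\Vir^j$ (this needs $(\tau_j,\tau_k)$ handled, or the quotient by $TV_0$) and the Serre relations coming from the deformed $L_0=b_j$. The semicontinuity step also needs care: it requires the realization map to be algebraic in $T$ near the Heisenberg fiber.
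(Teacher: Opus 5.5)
There is a genuine gap in Step 2. $\g_T$ is not a quotient of $\g_{\Vir}(\mc F_T)$, because the defining relation $[L^j_n,L^k_m]=0$ for $nm<0$, $j\ne k$, fails in the lattice VOA.

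Your claim that the computation of Proposition \ref{p:relations_between_Virasoros_in_lattice_VOAs} collapses when the common root is $\rho$ has it backwards. There, vanishing came from the shift of the power of $z$ by $mn(\tau_1,\tau_2)\ge 1$. With $(n\rho,m\rho)=0$ there is no shift, so the terms $\tau_j(-1)\ket{n\rho}_0\,\tau_k(-1)\ket{m\rho}$ and $\ket{n\rho}_{-1}\tau_j(0)\tau_k(-1)\ket{m\rho}$ survive. Carrying the computation out gives, up to an overall cocycle sign,
\[
[\tau_j(-1)\ket{n\rho},\tau_k(-1)\ket{m\rho}]=\bigl(n\tau_j-m\tau_k+n((\tau_j,\tau_k)-mn)\rho\bigr)(-1)\ket{(n+m)\rho}.
\]
For $j=k$ this is the Virasoro relation. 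For $j\ne k$ and $n>0>m$, the vector in parentheses pairs to $m-n\ne 0$ with $\rho$, so it is not a multiple of $\rho$. Hence the bracket is nonzero modulo $L_{-1}V_0$ when $n+m\ne0$, since there $L_{-1}V_0$ only contains $\rho(-1)\ket{(n+m)\rho}$. For $m=-n$ it is a nonzero element of $\h$.

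Passing to ``$V_1/TV_0$'' does not help. The $T$ there is the translation operator $L_{-1}$ (as in the proof of the proposition on $L_n^{\tau,a}$), so this is a fixed quotient, not one depending on your $T$.

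The same failure shows up in the datum itself. All $\gamma_k$ and $c_j$ equal $\rho$ while $b_j=\tau_j$, so for $k\ne j$ you have $\gamma_k(c_j)=0$ but $\gamma_k(b_j)=-1$. This violates the last axiom of Definition \ref{def:enh_gen_Cartan_datum} for every $T$, not only on a non-generic locus.

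Structurally, $L^j_n-L^k_n=(\tau_j-\tau_k)(-1)\ket{n\rho}$ with $\tau_j-\tau_k\perp\rho$ is a Heisenberg-type element. Modulo $L_{-1}V_0$ and terms in $\h$, $\Vir^k$ acts on it by $x_m\mapsto -mx_{n+m}$. This is exactly the first anti-example in Section \ref{s:examples}: the $L^j_n$ span a single Virasoro algebra acting on the Heisenberg algebra built on $\mathrm{span}(\tau_j-\tau_1)$. They do not give $24$ independent Virasoro nodes.

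Consequently, the upper bound in Step 3 has no support. The paper presents the statement as an immediate corollary of Theorems \ref{t:characters_of_g_in_families} and \ref{t:fake_monster_properties}, i.e.\ through this same identification. So nothing in the paper supplies the missing verification, and you cannot close the gap by citing it. A repair would need a framework that allows a Virasoro node acting on Heisenberg nodes sharing its root, which is not what Theorem \ref{t:characters_of_g_in_families} covers.

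Your semicontinuity argument for generic equality is a reasonable substitute for the paper's route through the ``no ideals avoiding $\h$'' clause. It can even be run directly in $V_1/L_{-1}V_0$: bracket monomials in the generators depend polynomially on the parameters, and the fake monster sits at $s=0$. But on its own it only yields $\dim\g_{T,\alpha}\ge p^{(24)}(1-\alpha^2/2)$ near that fiber. The upper bound is precisely what Step 2 was meant to deliver.

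Finally, the statement can only hold for $\alpha\ne 0$: $\g_{T,0}=\h$ is $26$-dimensional, while $p^{(24)}(1)=24$.
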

In particular, the denominator of $\g_{T,\alpha}$ for generic $T$ is an automorphic form (see \cite{Bor95}), and is bounded by this automorphic form for any $T$.

\begin{example}
    Choose vectors $\tau_1',\ldots,\tau_{24}'\in \rho^\perp$ whose images modulo $\rho$ form a basis in $\rho^\perp/\rho$. Let
    $$
    \tau_i:=\tau_i'+(\tau_i'^2+2(\tau_i',\sigma))\rho+\sigma.
    $$
    The coefficients are chosen in such a way that $\tau_i^2=-1$ and $(\tau_i,\rho)=-1$. Note that $\tau_i$ are linearly independent since they are linearly-independent modulo $\sigma$.
\end{example}

\section{Appendix: the ultimate algebra $\tilde{\g}_{\Vir,\Heis}$}
Let $\mc F$ be an enhanced generalized Cartan datum. We define the ultimate algebra $\tilde{\g}_{\Vir,\Heis}(\mc F)$ by putting the algebras $\mathfrak{v}^j=\Vir\rtimes \Heis$ instead of $\Vir^j$. We prove Theorem \ref{t:tilde_g_Virasoro_facts} for this algebra.

It is enough to construct the representation $\widetilde M(\lambda)$ and prove Lemma \ref{l:tilde_g_Virasoro_representation} about it, after which the proof identically repeats the Virasoro case.

As in the Virasoro case, we define $G$ to be the free product of all $\g_i$ and $\vv^j$ and $G_-$ be the free product of all $\n_{i,-}$ and $\vv^j_{<0}$. Choose $\lambda\in \h^*$, and let 
    $$
    \widetilde M(\lambda):=U(G_-),
    $$
    which also equals the free product of $U(\n_{i,-})$ and $U(\vv^j_{<0})$. We will construct a representation of $G$ on $\widetilde M(\lambda)$ depending on $\lambda$ and show that it descends to a representation of $\tilde\g(\mc F)$. The generators of $G_-$ act by left multiplications. Elements of $\h$ act line in the Virasoro case with one additional relation
    $$
    [h,x_n^j]=n\gamma_j(h)x_n^j,\qquad n<0.
    $$
    The generators of $G_+$ act like in the Virasoro case, with additional rules
    \begin{align*}
        e_i.x_n^j&=0,\\
        L_m^j.x_n^{j_0}&=-n\delta_{j,j_0}x_{n+m}^j.1,\\
        x_m^{j}.f_i&=0,\\
        x_m^{j}.L_n^{j_0}&=m\delta_{j,j_0}x_{m+n}^j.1,\\
        x_m^j.x_n^{j_0}&=0,
    \end{align*}
    where $m\ge 0$, $n<0$.

    \begin{proof}
As in the Virasoro case, we first check that the operators preserve all relations in $G_-$, hence are well-defined. As before, left multiplication by elements in $G_-$ clearly preserves the relations in $G_-$. We must check the action of positive operators $L_k^{j_0}$ and $x_k^{j_0}$ for $k\ge 0$.

We have two additional relations in $G_-$ apart from the already checked ones in Lemma \ref{l:tilde_g_Virasoro_representation}:
\
$$
L_n^jx_m^j-x_m^jL_n^j=-mx_{n+m}^j,\qquad x_n^jx_m^j-x_m^jx_n^j=0
$$ 
for $n,m<0$. 

For the commutativity of the Heisenberg part, we compute for $k \ge 0$ and $m, n \le 0$:
\begin{align*}
[L_k^{j_0},x_n^jx_m^j-x_m^jx_n^j]&=[[L_k^{j_0},x_n^j],x_m^j]+[x_n^j,[L_k^{j_0},x_m^j]]\\
&=-n\delta_{j,j_0}[x_{k+n}^j,x_m^j]-m\delta_{j,j_0}[x_n^j,x_{k+m}^j]\\
&=\delta_{j,j_0}\delta_{n+m+k,0}(-n(k+n)-mn)\lambda(c_j)=0,
\end{align*}
where we apply induction on $k$.

For the action of Virasoro generators on the Virasoro-Heisenberg relations, for $k\ge 0$ and $m,n\le 0$ we compute:
\begin{align*}
[L_k^{j_0},L_n^jx_m^j&-x_m^jL_n^j+mx_{n+m}^j]=[[L_k^{j_0},L_n^j],x_m^j]+[L_n^j,[L_k^{j_0},x_m^j]]+m[L_k^{j_0},x_{n+m}^j]\\
&=\delta_{j,j_0}(k-n)[L_{k+n}^j,x_m^j]-\delta_{j,j_0}m[L_n^j,x_{k+m}^j]-\delta_{j,j_0}m(n+m)x_{k+n+m}^j\\
&=\delta_{j,j_0}\left(-(k-n)mx_{k+n+m}^j+m(k+m)x_{k+n+m}^j-m(n+m)x_{k+n+m}^j\right)\\
&=\delta_{j,j_0}(-km+nm+mk+m^2-mn-m^2)x_{k+n+m}^j=0,
\end{align*}
wherein the second equality we used the assumption that $c_j$ commutes with the Heisenberg generators.

For the action of Heisenberg generators on the Virasoro relations, we have for $k\ge 0$ and $m,n< 0$:
\begin{align*}
[x_k^{j_0},&L_n^jL_m^j-L_m^jL_n^j-(n-m)L_{m+n}^j]\\
&=[[x_k^{j_0},L_n^j],L_m^j]+[L_n^j,[x_k^{j_0},L_m^j]]-(n-m)[x_k^{j_0},L_{m+n}^j]\\
&=\delta_{j,j_0}k[x_{k+n}^j,L_m^j]+\delta_{j,j_0}k[L_n^j,x_{k+m}^j]-\delta_{j,j_0}(n-m)kx_{m+n+k}^j\\
&=\delta_{j,j_0}\left(k(k+n)x_{m+n+k}^j-k(k+m)x_{m+n+k}^j-(n-m)kx_{m+n+k}^j\right)=0.
\end{align*}
This finishes the proof that the operators are well-defined on $\widetilde M(\lambda)$.

Next, we check that these operators satisfy the relations of $\tilde\g_{\Vir, \Heis}(\mathcal{F})$. As in the proof of Lemma \ref{l:tilde_g_Virasoro_representation}, it is enough to only check the relations between positive or between negative generators. For negative generators, the relations are evident since $G_-$ acts by left multiplication. 

We need to check relations in $\vv^j$. For this, observe that if $A$ and $B$ are algebras with derivations $D_A$ and $D_B$ on them, then there exists a unique derivation $D$ on the free product $A*B$ extending $D_A$ and $D_B$. Then notice that $\vv^j_{>0}$ acts on $M(\lambda)\simeq U(G_-)$ by derivations extended from each free subproduct $U(\vv^{j_0}_{<0})$ and $U(f_i)$. The algebra $\vv^j_{>0}$ acts by $0$ on each of those algebas except $U(\vv^j_{<0})$, on which it acts as on its Verma module 
$$
M^j(\lambda):=U(\vv^j)\otimes_{U(\vv^j_{\ge 0})}\C_{\lambda}.
$$
This action satisfies all the relations in $U(\vv^j_{>0})$, so the extension does. Thus, the proof is complete.

\end{proof}
\section*{Acknowledgments}
I thank Professor Igor Frenkel for lots of fruitful discussions and his proposal to look at relations between Virasoro algebras inside lattice VOAs, which served as one of the motivating points for me to introduce VBKM algebras. I was partially supported by NSF grant DMS-2501558.

% \medskip
% \noindent \textbf{Funding:}

\section*{Declarations}
%\noindent \textbf{Conflict of Interest:} The author has no conflicts of interest to declare.

\noindent \textbf{AI usage:} AI was not used to produce any results in this paper or write any piece of this paper. 

\bibliographystyle{alphaurl}
%\nocite{*}
\bibliography{references}

@article{BGGN, 
author = {B{\"a}rwald, Oliver and Gebert, R. W. and G{\"u}naydin, Murat and Nicolai, Hermann}, 
title = {Missing modules, the {G}nome {L}ie algebra, and {E}10}, 
journal = {Communications in Mathematical Physics}, 
volume = {195}, 
year = {1998} 
}

@article{Bor88,
  title = {Generalized {K}ac-{M}oody algebras},
  volume = {115},
  ISSN = {0021-8693},
  DOI = {10.1016/0021-8693(88)90275-x},
  number = {2},
  journal = {Journal of Algebra},
  publisher = {Elsevier BV},
  author = {Borcherds,  Richard},
  year = {1988},
  pages = {501–512}
}

@article{Bor90,
  title = {The monster {L}ie algebra},
  volume = {83},
  ISSN = {0001-8708},
  DOI = {10.1016/0001-8708(90)90067-w},
  number = {1},
  journal = {Advances in Mathematics},
  publisher = {Elsevier BV},
  author = {Borcherds,  Richard E},
  year = {1990},
  pages = {30–47}
}

@article{Bor92,
author = {Borcherds, Richard E.},
journal = {Inventiones mathematicae},
number = {2},
pages = {405-444},
title = {Monstrous moonshine and monstrous {L}ie superalgebras.},
url = {http://eudml.org/doc/144026},
volume = {109},
year = {1992},
}

@article{Bor95,
author = {Borcherds, Richard E.},
journal = {Inventiones mathematicae},
number = {1},
pages = {161-214},
title = {Automorphic forms on {O(s + 2,2)(R)} and infinte products.},
url = {http://eudml.org/doc/144273},
volume = {120},
year = {1995},
}

@book{FBZ04,
  title = {Vertex Algebras and Algebraic Curves},
  ISBN = {9781470413156},
  ISSN = {2331-7159},
  DOI = {10.1090/surv/088},
  journal = {Mathematical Surveys and Monographs},
  publisher = {American Mathematical Society},
  author = {Frenkel,  Edward and Ben-Zvi,  David},
  year = {2004},
  month = Aug 
}

@incollection{F85,
  author       = {Frenkel, Igor B.},
  title        = {Representations of {K}ac-{M}oody Algebras and Dual Resonance Models},
  booktitle    = {Applications of Group Theory in Physics and Mathematical Physics},
  editor       = {M. Flato and P. Sally and G. Zuckerman},
  series       = {Lectures in Applied Mathematics},
  volume       = {21},
  pages        = {325--353},
  publisher    = {American Mathematical Society},
  address      = {Providence, RI},
  year         = {1985}
}

@book{FLM89,
  title={Vertex Operator Algebras and the {M}onster},
  author={Frenkel, I. and Lepowsky, J. and Meurman, A.},
  isbn={9780080874548},
  series={Pure and Applied Mathematics},
  year={1989},
  publisher={Academic Press}
}

@article{FT21,
  title = {The trascendence of {K}ac-{M}oody algebras},
  volume = {35},
  ISSN = {2406-0933},
  DOI = {10.2298/fil2110445f},
  number = {10},
  journal = {Filomat},
  publisher = {National Library of Serbia},
  author = {Fernández-Ternero,  Desamparados and Núñez-Valdés,  Juan},
  year = {2021},
  pages = {3445–3474}
}

@article{GN94, 
author = {Gebert, R. W. and Nicolai, H.}, 
title = {On {E}10 and the {DDF} construction}, 
journal = {Communications in Mathematical Physics}, 
volume = {166}, 
year = {1994} 
}

@misc{Jur96,
  title = {An exposition of generalized {K}ac-{M}oody algebras},
  ISBN = {9780821877852},
  ISSN = {0271-4132},
  DOI = {10.1090/conm/194/02391},
  journal = {Lie Algebras and Their Representations},
  publisher = {American Mathematical Society},
  author = {Jurisich,  Elizabeth},
  year = {1996},
  pages = {121–159}
}

@article{Jur98,
  title = {Generalized {K}ac-{M}oody {L}ie algebras,  free {L}ie algebras and the structure of the {M}onster {L}ie algebra},
  volume = {126},
  ISSN = {0022-4049},
  DOI = {10.1016/s0022-4049(96)00142-9},
  number = {1-3},
  journal = {Journal of Pure and Applied Algebra},
  publisher = {Elsevier BV},
  author = {Jurisich,  Elizabeth},
  year = {1998},
  pages = {233–266}
}

@book{Kac90,
  author    = {Kac, Victor G.},
  title     = {Infinite-Dimensional {L}ie Algebras},
  series    = {Cambridge Studies in Advanced Mathematics},
  volume    = {44},
  edition   = {3rd},
  publisher = {Cambridge University Press},
  address   = {Cambridge, UK},
  year      = {1990},
  isbn      = {0-521-37205-6},
}

@book{Kac98,
  title = {Vertex algebras for beginners},
  ISBN = {9780821813966},
  ISSN = {2376-919X},
  DOI = {10.1090/ulect/010},
  journal = {University Lecture Series},
  publisher = {American Mathematical Society},
  author = {Kac,  Victor},
  year = {1998}
}

@book{KR97,
author = {Kac, Victor G. and Raina, Ashok K.},
title = {{B}ombay Lectures On Highest Weight Representations Of Infinite Dimensional {L}ie Algebras},
publisher = {World Scientific},
year = {1987}}

@article{Nie02,
  title = {Some generalized {K}ac-{M}oody algebras with known root multiplicities},
  volume = {157},
  ISSN = {0065-9266},
  DOI = {10.1090/memo/0746},
  number = {746},
  journal = {Memoirs of the American Mathematical Society},
  publisher = {American Mathematical Society (AMS)},
  author = {Niemann,  Peter},
  year = {2002}
}

@book{Ray06,
  author = {Ray, Urmie}, 
  title = {Automorphic Forms and {L}ie Superalgebras},
  ISBN = {9781402050091},
  DOI = {10.1007/978-1-4020-5010-7},
  journal = {Algebra and Applications},
  publisher = {Springer Netherlands},
  year = {2006}
}

@article{Sha72,
  title = {On a bilinear form on the universal enveloping algebra of a complex semisimple {L}ie algebra},
  volume = {6},
  ISSN = {1573-8485},
  DOI = {10.1007/bf01077650},
  number = {4},
  journal = {Functional Analysis and Its Applications},
  publisher = {Springer Science and Business Media LLC},
  author = {Shapovalov,  N. N.},
  year = {1972},
  month = Oct,
  pages = {307–312}
}

\end{document}